\numberwithin{equation}{section} 
\theoremstyle{plain}
\newtheorem{theorem}{Theorem}[section]
\newtheorem{proposition}[theorem]{Proposition}
\newtheorem{lemma}[theorem]{Lemma}
\newtheorem{corollary}[theorem]{Corollary}
\theoremstyle{definition}
\newtheorem{definition}[theorem]{Definition}
\newtheorem{example}[theorem]{Example}
\newtheorem{hypo}[theorem]{Hypothesis}
\theoremstyle{remark}
\newtheorem{remark}[theorem]{Remark}
\newcommand{\E}{{\mathcal E}}
\newcommand{\F}{\mathbb{F}}
\newcommand{\Q}{\mathbb{Q}}
\newcommand{\Z}{\mathbb{Z}}
\DeclareMathOperator{\Spec}{Spec}
\DeclareMathOperator{\SL}{SL}
\DeclareMathOperator{\GL}{GL}
\DeclareMathOperator{\Gal}{Gal}
\renewcommand{\to}{\longrightarrow}
\newcommand{\HH}{\mathrm{H}}
\begin{document}

\title{Massey products and elliptic curves}

\author[F. Bleher]{Frauke M. Bleher}
\address{F.B.: Department of Mathematics\\University of Iowa\\
14 MacLean Hall\\Iowa City, IA 52242-1419\\ U.S.A.}
\email{frauke-bleher@uiowa.edu}
\thanks{The first author was supported in part by NSF  Grant No.\ DMS-1801328 and Simons Foundation grant No. 960170.}

\author[T. Chinburg]{Ted Chinburg}
\address{T.C.: Department of Mathematics\\University of Pennsylvania\\
Philadelphia, PA 19104-6395\\ U.S.A.}
\email{ted@math.upenn.edu}
\thanks{The second author was supported in part by NSF SaTC grant No. CNS-1701785.}

\author[J. Gillibert]{Jean Gillibert}
\address{J. G.:  Institut de Math{\'e}matiques de Toulouse \\ CNRS UMR 5219 \\
118, route de Narbonne, 31062 Toulouse Cedex \\ France.}
\email{Jean.Gillibert@math.univ-toulouse.fr}
\thanks{The third author is the corresponding author. He was supported in part by the CIMI Labex.}

\date{April 27, 2023}

\subjclass[2010]{14F20 (Primary) 55S30, 14H52 (Secondary)}
\keywords{Massey products, \'etale cohomology, smooth projective curves, elliptic curves}

\begin{abstract}
We study the vanishing of Massey products of order at least $3$ for absolutely irreducible smooth projective curves  over a field  with coefficients in $\mathbb{Z}/\ell$. We mainly focus on elliptic curves, for which we obtain a complete characterization of when triple Massey products do not vanish.
\end{abstract}

\maketitle

%%%%%%%%%%%%%%%%%%%%%%%%%%%%%%%%%%%%%%%%%%%%%

\section{Introduction}
\label{s:intro}

This paper has to do with the vanishing of triple Massey products on $\HH^1(X,\mathbb{Z}/\ell)$ when $\ell$ is an odd prime and $X$ is an absolutely irreducible smooth projective variety over a  field $F$ in which $\ell$ is invertible.  When $d = \mathrm{dim}(X) = 0$, Min{\'a}\v{c} and T{\^a}n showed in \cite{MinacTan2016} that this triple product always vanishes for arbitrary $F$, following earlier work by
 Hopkins and Wickelgren \cite{HopkinsWickelgren}, Matzri \cite{Matzri}, Efrat and Matzri \cite{EfratMatzri}, and others.  When $d = 0$ and $F$ is a number field, Harpaz and Wittenberg showed in \cite{HarWit2019} that all Massey products of order at least $3$ vanish.  For a more detailed account of the case $d  = 0$ see the introduction of \cite{HarWit2019}. 
Ekedahl gave an example in \cite{Ekedahl} showing that the triple Massey product need not vanish when $d = 2$ and $F = \mathbb{C}$.   

The present paper arose from the  problem of determining when triple Massey  products vanish when $d = 1$, i.e. for curves over an arbitrary field $F$.  Our main result classifies exactly which triple Massey products do not vanish when $X = E$ is an elliptic curve over $F$ and the $\ell$-torsion of $E$ over a separable closure $\bar{F}$ of $F$ is defined over $F$.  We show that the only case in which these do not vanish is when $\ell = 3$
 and the three elements of $\HH^1(X,\mathbb{Z}/\ell)$ generate the same one-dimensional space (see Lemma \ref{lem:nice!}).   The classification when $\ell = 3$ of non-vanishing triple Massey products is given in Theorem \ref{thm:classifiynontrivial!}.  One consequence is the following result, where  $\bar{E}=E\otimes_F \bar{F}$ and $G_F^{(3)}$ denotes the pro-$3$ completion of $\mathrm{Gal}(\bar{F}/F)$:
 
 \begin{theorem}
\label{cor:nontrivialgeneral!}
Let $F$ be a field whose characteristic is not $3$, and let $E$ be an elliptic curve over $F$ such that the $3$-torsion of $E(\bar{F})$ is defined over $F$. 
There exists a character $\chi\in\HH^1(E,\mathbb{Z}/3)=\mathrm{Hom}(\pi_1(E),\mathbb{Z}/3)$ such that $\langle \chi,\chi,\chi \rangle$ does not contain zero if and only if either 
\begin{itemize}
\item[(i)] the action of $G_F^{(3)}$ on $\bar{E}[9]$ is not given by multiplication by scalars in $(\mathbb{Z}/9)^\times$, or 
\item[(ii)] the action of $G_F^{(3)}$ on $\bar{E}[9]$ is given by multiplication by scalars in $(\mathbb{Z}/9)^\times$ and there exists a primitive ninth root of unity $\zeta\in\bar{F}$ such that $\zeta\not\in F$ and $F(\zeta)$ is not the only cubic extension of $F$ inside $\bar{F}$.
\end{itemize}
\end{theorem}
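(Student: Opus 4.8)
The plan is to derive this existence statement from the pointwise classification of Theorem~\ref{thm:classifiynontrivial!}, combined with the Galois-module structure of $\HH^1(E,\Z/3)$ and Kummer theory over $F$. By Lemma~\ref{lem:nice!} a triple Massey product on $\HH^1(E,\Z/3)$ can avoid zero only when its three entries span one and the same line, so it is enough to analyse the diagonal products $\langle\chi,\chi,\chi\rangle$ and then ask for which fields $F$ and curves $E$ some $\chi$ makes this product miss zero. First I would fix notation via the homotopy exact sequence $1\to\pi_1(\bar E)\to\pi_1(E)\to G_F\to 1$ with $G_F=\Gal(\bar F/F)$; since $\Z/3$-coefficients factor through pro-$3$ quotients, passing to $G_F^{(3)}$ and taking the low-degree terms of the Hochschild--Serre spectral sequence gives
\begin{equation*}
0\to \HH^1(F,\Z/3)\xrightarrow{\mathrm{infl}}\HH^1(E,\Z/3)\xrightarrow{\mathrm{res}}\Hom(\pi_1(\bar E),\Z/3)^{G_F}\to 0 .
\end{equation*}
Because $\bar E[3]$ is $F$-rational the geometric term is $\bar E[3]^\vee\cong(\Z/3)^2$ with trivial action, and the Weil pairing forces $\mu_3\subseteq F$. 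Thus every $\chi$ splits into a geometric component $\bar\chi\in\bar E[3]^\vee$ together with an arithmetic ambiguity in $\HH^1(F,\Z/3)=\Hom(G_F^{(3)},\Z/3)$.

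Next I would recast the criterion of Theorem~\ref{thm:classifiynontrivial!} group-theoretically. By Dwyer's dictionary, $\langle\chi,\chi,\chi\rangle\ni 0$ precisely when $\chi$ lifts to a homomorphism $\pi_1(E)\to U_4(\F_3)$ into the upper unitriangular group whose three superdiagonal entries all equal $\chi$; equivalently the $\HH^2$-class $[\chi\cup x_{24}+x_{13}\cup\chi]$, built from cochains with $\delta x_{13}=\delta x_{24}=\chi\cup\chi$, can be made to lie in the indeterminacy $\chi\cup\HH^1(E,\Z/3)+\HH^1(E,\Z/3)\cup\chi$. A character inflated from $\Spec F$ (so $\bar\chi=0$) has its product inflated from the $d=0$ situation, where Min\'a\v{c}--T\^an \cite{MinacTan2016} guarantee it contains zero; so I would first discard these and assume $\bar\chi\neq 0$. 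For $\bar\chi\neq 0$ the residual obstruction produced by Theorem~\ref{thm:classifiynontrivial!} is controlled by whether the order-$3$ quotient of $\bar E[3]$ determined by $\bar\chi$ lifts, compatibly with Galois, to $\bar E[9]$; the two inputs are the reduction $\bar E[9]\to\bar E[3]$ and the mod-$9$ cyclotomic character, which by $\mu_3\subseteq F$ lands in $1+3(\Z/9)\cong\Z/3$ and is nontrivial exactly when $\zeta_9\notin F$.

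I would then split on whether $G_F^{(3)}$ acts on $\bar E[9]$ by scalars. The image of $G_F^{(3)}$ lies in $\ker(\GL_2(\Z/9)\to\GL_2(\Z/3))=I+3M_2(\F_3)$ by $F$-rationality of $\bar E[3]$. In case~(i), when this image is \emph{not} contained in the scalars $(\Z/9)^\times\cdot I$, some line in $\bar E[3]$ fails to lift to a Galois-stable cyclic order-$9$ subgroup of $\bar E[9]$; for a geometric $\chi$ cutting out such a line the lifting obstruction of Theorem~\ref{thm:classifiynontrivial!} is nonzero, so $\langle\chi,\chi,\chi\rangle\not\ni 0$ regardless of the arithmetic of $F$. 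This is the easy implication: a non-scalar action already supplies the obstruction.

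The scalar case is the delicate one and the main obstacle. If $G_F^{(3)}$ acts through a scalar $\lambda$ with $\chi_{\mathrm{cyc}}=\lambda^2$, every geometric line lifts and the purely geometric obstruction degenerates, so nonvanishing must come from the cross terms between $\bar\chi$ and an arithmetic $\psi\in\HH^1(F,\Z/3)$. Expanding the defining system for $\chi=\mathrm{infl}(\psi)+\tilde{\bar\chi}$, the relevant $\HH^2(E,\Z/3)$-class becomes a cup product of $\bar\chi$ with a combination of $\psi$ and $\lambda$, which I would analyse through the Hochschild--Serre edge maps and Poincar\'e duality on $E$. Using $\mu_3\subseteq F$ and the identification $\HH^1(F,\Z/3)\cong F^\times/(F^\times)^3$, the cyclotomic character corresponds to the cubic extension $F(\zeta_9)/F$, and the cross-term class escapes the indeterminacy for some $(\bar\chi,\psi)$ exactly when $\lambda\neq 1$ (that is $\zeta_9\notin F$) and $\psi$ may be chosen linearly independent of $\chi_{\mathrm{cyc}}$, i.e. when $\dim_{\F_3}\HH^1(F,\Z/3)\geq 2$. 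Matching this last condition to the field-theoretic statement that $F(\zeta_9)$ is not the only cubic extension of $F$ inside $\bar F$ is exactly condition~(ii), while in the complementary scalar situations ($\zeta_9\in F$, forcing $\lambda=1$ and $\bar E[9]$ rational; or $F(\zeta_9)$ the unique cubic extension, forcing $\dim_{\F_3}\HH^1(F,\Z/3)=1$) every defining system closes up and all diagonal products contain zero. I expect the real work, and the place where care is needed, to be this scalar case: pinning down the cross-term obstruction precisely, and verifying that its nonvanishing matches the twin field-theoretic conditions $\zeta_9\notin F$ and the existence of a second cubic extension.
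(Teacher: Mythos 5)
Your skeleton is the paper's: reduce to diagonal products $\langle\chi,\chi,\chi\rangle$, split on whether $G_F^{(3)}$ acts on $\bar{E}[9]$ by scalars, and in the non-scalar case feed a purely geometric character (trivial on the decomposition group, with the non-eigenvector $a$ placed in the kernel of $\bar{\chi}$) into condition (1) of Theorem \ref{thm:classifiynontrivial!}. That half is essentially the paper's argument and is fine, as is your dictionary ``condition (ii) $\Leftrightarrow$ $\lambda\neq 1$ and $\dim_{\mathbb{F}_3}\HH^1(F,\mathbb{Z}/3)\ge 2$''.

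The genuine gap is the scalar case, exactly where you say the real work lies: you assert, but do not prove, that ``the cross-term class escapes the indeterminacy for some $(\bar{\chi},\psi)$ exactly when $\lambda\neq1$ and $\psi$ may be chosen linearly independent of $\chi_{\mathrm{cyc}}$''. Naming Hochschild--Serre edge maps and Poincar\'e duality is not a mechanism, and the heuristic you offer is misleading: modulo the indeterminacy $\chi\cup\HH^1(E,\mathbb{Z}/3)$ the diagonal product equals $-\beta(\chi)$ (Remark \ref{rem:Kim}), and the Bockstein $\beta$ is additive, so the decomposition $\chi=\mathrm{infl}(\psi)+\tilde{\bar{\chi}}$ produces no cup-product cross term between $\psi$ and $\bar{\chi}$ at all; the interaction between the arithmetic and geometric parts enters only through the value of $\beta(\tilde{\bar{\chi}})$, which depends on the scalar $\lambda$ via the semidirect-product structure of $\overline{G}=\bar{E}[9]\rtimes\overline{G}_F$, and through the shape of the indeterminacy subspace. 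That is precisely what condition (2) of Theorem \ref{thm:classifiynontrivial!} packages, and it is how the paper concludes: take $\chi$ with any nonzero geometric part and with arithmetic part the Kummer character of an $\alpha$ satisfying $F(\sqrt[3]{\alpha})\neq F(\zeta)$; check that $\chi$ is actually a homomorphism (this uses scalarity of the action); then verify condition (2) via the Weil-pairing identity $\sigma(\zeta)=\zeta^{\lambda(\sigma)^2}$, which forces $\lambda(\iota)\equiv 7 \bmod 9$ for any $\iota\in K_F$ with $\iota(\zeta)=\zeta^4$, whence $(\iota-4)(b)=3b\notin(\mathbb{Z}/9)\,a$. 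The reverse implication (neither (i) nor (ii) implies every $\langle\chi,\chi,\chi\rangle$ contains $0$) also needs an argument you do not supply: scalarity kills condition (1), and ``$\zeta\in F$ or $F(\zeta)$ the unique cubic extension'' forces the fixed field of $K_F$, when it is cubic, to contain $\zeta$, killing condition (2). Until the scalar case is carried out by one of these routes, your proposal is a correct outline of what must be proved rather than a proof.
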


As an explicit example, suppose $F$ is a number field containing $\mathbb{Q}(\sqrt{3},\sqrt{-1})$ that does not contain a primitive ninth root of unity. When $E$ is the elliptic curve over $F$ with model $y^2 = x^3 - 1$, there is a character $\chi \in \HH^1(E,\mathbb{Z}/3)$ with non-vanishing triple Massey product (see Example \ref{ex:littleexample}).

Regarding primes $\ell>3$, we prove the following:

\begin{theorem}
\label{cor:forallell}
Let $\ell > 3$ be a prime number. Then there exist a prime number $p\neq \ell$, an elliptic curve $E$ defined over $\F_p$, and
 non-trivial characters $\chi_1,\chi_2,\chi_3\in\HH^1(E,\mathbb{Z}/\ell)$ such that $\langle \chi_1,\chi_2,\chi_3\rangle$ is not empty and does not contain zero.
\end{theorem}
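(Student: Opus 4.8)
The plan is to produce the example over a finite field $\F_p$, choosing $E$ so that geometric Frobenius acts on $\bar{E}[\ell]$ as a non-trivial unipotent matrix, and then to exhibit a single explicit mixed triple product whose non-vanishing is controlled by the action of Frobenius modulo $\ell^2$. I will use the standard reformulation: for $\chi_1,\chi_2,\chi_3\in\HH^1(E,\Z/\ell)=\Hom(\pi_1(E),\Z/\ell)$ with $\chi_1\cup\chi_2=\chi_2\cup\chi_3=0$, one has $0\in\langle\chi_1,\chi_2,\chi_3\rangle$ if and only if $\pi_1(E)$ admits a continuous homomorphism into the group $U_4(\Z/\ell)$ of unipotent upper-triangular $4\times 4$ matrices whose three superdiagonal entries are $\chi_1,\chi_2,\chi_3$. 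Under the hypothesis that the full $\ell$-torsion of $E$ is rational, Lemma~\ref{lem:nice!} shows that for $\ell>3$ the product $\langle\chi,\chi,\chi\rangle$ always contains $0$; the construction must therefore leave that regime, which is precisely what a non-trivial Frobenius action on $\bar{E}[\ell]$ accomplishes, and it is also why one works with distinct characters rather than a single $\chi$.

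First I would fix a prime $p\equiv 1\pmod{\ell^2}$ together with an elliptic curve $E/\F_p$ satisfying $a_p\equiv 2\pmod{\ell}$ and $\#E(\F_p)\equiv \ell\pmod{\ell^2}$, i.e.\ with exactly one factor of $\ell$ in $\#E(\F_p)$. Such pairs exist for every $\ell>3$: by Deuring's and Honda--Tate's description of admissible Frobenius traces one may prescribe $a_p\equiv 2-\ell\pmod{\ell^2}$ within the Hasse range for $p$ large (alternatively, apply Chebotarev to an $E/\Q$ with large mod-$\ell^2$ image). For such $E$, geometric Frobenius acts on $V:=\HH^1(\bar{E},\Z/\ell)$ through a non-trivial unipotent $\phi\in\SL_2(\F_\ell)$: its determinant is $p\equiv 1$, and $\ell\,\|\,\#E(\F_p)$ forces $\phi\neq 1$ with a single fixed line. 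Running the Hochschild--Serre spectral sequence for $1\to\pi_1(\bar{E})\to\pi_1(E)\to\Gal(\overline{\F}_p/\F_p)\to 1$ and using that $\Gal(\overline{\F}_p/\F_p)=\hat{\Z}$ has cohomological dimension $1$, I obtain that $\HH^1(E,\Z/\ell)$ is two-dimensional, spanned by an arithmetic class $\gamma$ inflated from $\HH^1(\F_p,\Z/\ell)$ and a geometric class $\alpha$ lifting the Frobenius-fixed line $V^\phi$. The key structural point is that all cup products on $\HH^1(E,\Z/\ell)$ vanish: $\gamma\cup\gamma=0$ and $\alpha\cup\alpha=0$ for degree reasons, while $\alpha\cup\gamma=0$ because cup product with $\gamma$ computes the composite $V^\phi\hookrightarrow V\twoheadrightarrow V_\phi$, which is zero in the unipotent case (the fixed line equals the image of $\phi-1$). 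Hence every triple Massey product among $\alpha,\gamma$ is defined with zero indeterminacy, and so is a single well-defined class in $\HH^2(E,\Z/\ell)$.

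The heart of the argument is then to evaluate the mixed product $\langle\alpha,\gamma,\alpha\rangle$, taking $\chi_1=\chi_3=\alpha$ and $\chi_2=\gamma$; it is defined and has trivial indeterminacy by the previous paragraph. I would compute it by constructing the defining system inside $U_4(\Z/\ell)$ from explicit cochains on $\pi_1(E)$: the two off-superdiagonal entries $m,n$ with $\delta m=-\alpha\cup\gamma$ and $\delta n=-\gamma\cup\alpha$ are forced, under conjugation by a Frobenius lift, to satisfy a compatibility at the next filtration level, and the resulting $(1,4)$-consistency integrates this data and sees $\phi$ only modulo $\ell^2$. The expected outcome is that $\langle\alpha,\gamma,\alpha\rangle$ equals a non-zero multiple of the Bockstein class $\beta(\alpha)\in\HH^2(E,\Z/\ell)$ attached to $0\to\Z/\ell\to\Z/\ell^2\to\Z/\ell\to 0$. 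Now $\beta(\alpha)\neq 0$ precisely when $\alpha$ does not lift to $\HH^1(E,\Z/\ell^2)$, equivalently when the Frobenius-fixed line of $V$ does not lift to a fixed line modulo $\ell^2$; a short computation with $\phi$ over $\Z/\ell^2$ shows that this failure is equivalent to $\ell\,\|\,\#E(\F_p)$, since it is governed by $\det(\phi-1)\equiv\#E(\F_p)\pmod{\ell^2}$. This is exactly the condition imposed on $E$, so $\langle\alpha,\gamma,\alpha\rangle$ is non-empty and does not contain $0$, and $\alpha,\gamma,\alpha$ are the desired non-trivial characters.

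The step I expect to be the genuine obstacle is the Massey-product computation of the previous paragraph: one must carry the defining-system calculation far enough to prove that the $(1,4)$-obstruction is a genuinely non-zero multiple of $\beta(\alpha)$ rather than accidentally zero, and to make precise how $\phi$ modulo $\ell^2$ (equivalently the exact power of $\ell$ dividing $\#E(\F_p)$) enters. By contrast, the two supporting facts --- the vanishing of all cup products on $\HH^1(E,\Z/\ell)$ and the existence of $(p,E)$ with $p\equiv 1\pmod{\ell^2}$ and $\ell\,\|\,\#E(\F_p)$ --- should be routine, the former from the unipotent structure of $\phi$ and the latter from Honda--Tate or Chebotarev. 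As a consistency check I would also record that $\langle\gamma,\gamma,\gamma\rangle=0$, since it is inflated from the field $\F_p$ where all triple products vanish by \cite{MinacTan2016}; this confirms that the non-vanishing we detect is a genuinely geometric phenomenon carried by $\alpha$ rather than an artefact of the arithmetic direction $\gamma$.
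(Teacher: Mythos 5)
Your framework is largely sound and in fact lines up with the paper's own treatment: your curve (Frobenius acting on $\bar{E}[\ell]$ as a non-trivial unipotent, with a single fixed line) is exactly case (b) of \S\ref{ss:hard2}; your observation that all cup products on $\HH^1(E,\mathbb{Z}/\ell)$ vanish is Lemma \ref{lem:cuppdegenerate}(b); and your choice of characters does work. Indeed, in the notation of Proposition \ref{prop:ellfin3case2}, taking $\chi_1=\chi_3=\alpha$ (so $x_1=x_3=1$, $\varphi_1=\varphi_3=0$) and $\chi_2=\gamma$ (so $x_2=0$, $\varphi_2=1$), condition (1) of that proposition reads $2\equiv 0 \pmod{\ell}$, which fails for every admissible $m$ (rescaling $m$ only multiplies the left side by a non-zero square), so $\langle\alpha,\gamma,\alpha\rangle$ does not contain zero; the paper makes the transpose choice (arithmetic outer characters, geometric middle one) and instead violates condition (2). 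Your existence argument for $(p,E)$ via Deuring/Honda--Tate is also fine, and is a legitimate alternative to the paper's route through Igusa's theorem, Hilbert irreducibility and Chebotarev. The problem is that the heart of the proof --- actually establishing non-vanishing --- is never carried out: you state the computation as an ``expected outcome'' and yourself flag it as the genuine obstacle. As written, the central claim is asserted, not proved.

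Worse, the mechanism you propose for that step is not merely unproven but false, so the gap cannot be closed along the route you describe. You claim $\langle\alpha,\gamma,\alpha\rangle$ equals a non-zero multiple of the Bockstein class $\beta(\alpha)$, so that non-vanishing would be governed by whether the Frobenius-fixed line of $\bar{E}[\ell]$ lifts modulo $\ell^2$, equivalently by whether $\ell$ exactly divides $\#E(\mathbb{F}_p)$. For $\ell>3$ this cannot be the right criterion: by Remark \ref{rem:U4!}, $U_4(\mathbb{Z}/\ell)$ has exponent $\ell$ when $\ell>3$, so every continuous homomorphism $\pi_1(E)\to U_4(\mathbb{Z}/\ell)$ kills $\ell$-th powers and factors through the quotient $\overline{G}_\ell$ of Lemma \ref{lem:Gell}, which is built entirely from the mod-$\ell$ representation. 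Hence whether $\langle\chi_1,\chi_2,\chi_3\rangle$ contains zero is insensitive to $\bar{E}[\ell^2]$; the mod-$\ell^2$ sensitivity you have in mind is real only for $\ell=3$, where $U_4(\mathbb{Z}/3)$ has exponent $9$. Concretely, take $\ell=5$, $p=101$, $a_p=2$, and the curve in this isogeny class with endomorphism ring $\mathbb{Z}[10i]=\mathbb{Z}[\pi]$, so that $E(\mathbb{F}_{101})\cong\mathbb{Z}/100$: here Frobenius is a non-trivial unipotent mod $5$, the fixed line does lift mod $25$ (the $5$-primary part of $E(\mathbb{F}_{101})$ is cyclic of order $25$), so $\beta(\alpha)=0$ by your own (correct) lifting criterion, yet Proposition \ref{prop:ellfin3case2} shows $\langle\alpha,\gamma,\alpha\rangle$ still does not contain zero. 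Your claimed identity would force it to vanish for this curve, a contradiction. The hypotheses $p\equiv 1\pmod{\ell^2}$ and $\ell\,\|\,\#E(\mathbb{F}_p)$ are therefore red herrings (harmless for your particular curve, but irrelevant); what is actually needed is the purely mod-$\ell$ defining-system computation in $U_4(\mathbb{Z}/\ell)$ --- the explicit commutator and lifting equations carried out in the proof of Proposition \ref{prop:ellfin3case2} --- showing that no homomorphism $\rho:\overline{G}_\ell\to U_4(\mathbb{Z}/\ell)$ with the prescribed superdiagonal exists.
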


We now describe the contents of the paper.  In \S \ref{s:prelim} we recall some basic results about Galois and \'etale cohomology and about Massey products.  In \S \ref{s:restrict} we show that higher Massey products on curves over separably closed fields always contain $0$ provided they are not empty. In \S \ref{s:necessary} we prove some necessary conditions for higher Massey products to be non-empty and to not contain $0$.  In \S \ref{s:triple} we prove our main results, Theorem \ref{thm:classifiynontrivial!} and Theorem  \ref{cor:nontrivialgeneral!}, concerning triple Massey products when $X=E$ is an elliptic curve.  In \S \ref{s:ellipticnumber} we analyze two families of examples arising from specializing a generic family of elliptic curves \eqref{eq:igusacurve} and from CM elliptic curves. Finally in \S \ref{s:ellipticfinite} we treat arbitrary elliptic curves  $E$ when $F$ is a finite field without the assumption that the $\ell$-torsion of $E(\bar{F})$ is defined over $F$. We conclude by proving Theorem~\ref{cor:forallell}.

\medbreak

\noindent
\textbf{Acknowledgements.} The first and second authors would like to thank the University of Toulouse for its support and hospitality during work on this paper.   
We would like to thank Florian Pop and the anonymous referee for their comments and suggestions which helped improve the paper.

%%%%%%%%%%%%%%%%%%%%%%%%%%%%%%%%%%%%%%%%%%%%%

\section{Preliminaries}
\label{s:prelim}

Let $F$ be a field with a fixed separable closure $\bar{F}$ inside a fixed algebraic closure $F^{\mathrm{alg}}$, and let $\ell$ be a prime number that is invertible in $F$. Let $X$ be a smooth projective geometrically irreducible curve over $F$, and define 
$$\quad \bar{X}:=X\otimes_F\bar{F}.$$ 
Let $\eta$ be a geometric point of $X$, i.e. a point with values in $\bar{F}$, which we also view as a geometric point of $\bar{X}$. To simplify notation, we denote the \'etale fundamental groups by
$$\pi_1(X):=\pi_1(X,\eta) \quad \text{and}\quad  \pi_1(\bar{X}):=\pi_1(\bar{X},\eta).$$

\begin{remark}
\label{rem:voodoo}
One may ask what happens if one replaces the separable closure $\bar{F}$ by the algebraic closure $F^\mathrm{alg}$. Since $F^{\mathrm{alg}}/\bar{F}$ is purely inseparable, the restriction functor gives an equivalence of categories between the small \'etale sites of $\bar{X}$ and of $X\otimes_F F^{\mathrm{alg}}$ \cite[VIII, Th{\'e}or{\`e}me~1.1 and Exemples~1.3]{SGA4-2}. Since $\ell$ is invertible in $F$, we obtain for all $i\ge 0$,
$$\HH^i(\bar{X},\mathbb{Z}/\ell)=\HH^i(X\otimes_F F^{\mathrm{alg}},\mathbb{Z}/\ell).$$

Similarly, according to \cite[VIII, \S{}3.4]{SGA4-2}, the fiber functors relative to the geometric points $\eta\otimes_F \Spec(F^\mathrm{alg})\to X\otimes_F F^{\mathrm{alg}}$ and $\eta\to \bar{X}$ are isomorphic. Therefore $\pi_1(\bar{X},\eta) = \pi_1(X\otimes_F F^{\mathrm{alg}}, \eta\otimes_F F^\mathrm{alg})$.
In particular, the maximal elementary abelian $\ell$-quotient groups of these groupes are the same, which implies
$$\mathrm{Pic}(\bar{X})[\ell] = \mathrm{Pic}(X\otimes_F F^{\mathrm{alg}})[\ell].$$

Moreover, by \cite[Prop. I.3.24]{Milne}, $\bar{X}$ and $X\otimes_F F^{\mathrm{alg}}$ are smooth projective curves over $\bar{F}$ and $F^{\mathrm{alg}}$, respectively, and $F^{\mathrm{alg}}\otimes_FF(X)$ is a field. By \cite[Thm. 26.2]{Matsumura}, $F(X)$ is separably generated over $F$, i.e. there is an element $t\in F(X)$ that is transcendental over $F$ such that $F(X)/F(t)$ is a separable algebraic extension. Therefore, $\bar{X}(\bar{F})$ is Zariski dense in $\bar{X}$.
\end{remark}

We  have a natural isomorphism of first cohomology groups
$$\HH^1(X,\mathbb{Z}/\ell) \cong \HH^1(\pi_1(X),\mathbb{Z}/\ell).$$
For higher cohomology groups, we have the following result from \cite[\S2.1.2]{AchingerThesis} (see also \cite[\S3]{Achinger2015}):

\begin{proposition}
\label{prop:Achinger}
If $\bar{X}$
%\footnote{Yes, I mean the separable closure here, because twists of the projective line are conics, and these have points in the separable closure. This is standard (and follows also from the remark by F. Pop)} 
is not isomorphic to $\mathbb{P}^1_{\bar{F}}$ then there is a natural isomorphism
$$\HH^i(X,\mathbb{Z}/\ell) \cong \HH^i(\pi_1(X),\mathbb{Z}/\ell)$$
for all $i\ge 1$.
\end{proposition}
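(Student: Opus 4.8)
The plan is to prove that, under the stated hypothesis, $X$ is a $K(\pi,1)$ for $\mathbb{Z}/\ell$-coefficients: the asserted isomorphism is precisely the statement that the canonical comparison maps from group cohomology to \'etale cohomology are isomorphisms in all positive degrees. The tool is the (continuous) Cartan--Leray, or Hochschild--Serre, spectral sequence attached to the universal pro-\'etale cover $\tilde X \to X$, a pro-Galois cover with group $\pi_1(X)$:
\[
E_2^{p,q} = \HH^p(\pi_1(X), \HH^q(\tilde X, \mathbb{Z}/\ell)) \;\Longrightarrow\; \HH^{p+q}(X, \mathbb{Z}/\ell),
\]
whose edge homomorphisms $\HH^p(\pi_1(X),\mathbb{Z}/\ell)\to \HH^p(X,\mathbb{Z}/\ell)$ are exactly the natural maps in question. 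Since $\HH^0(\tilde X,\mathbb{Z}/\ell)=\mathbb{Z}/\ell$ with trivial action (as $\tilde X$ is connected), the whole proposition reduces to the single vanishing statement $\HH^q(\tilde X,\mathbb{Z}/\ell)=0$ for every $q>0$: this forces $E_2^{p,q}=0$ for $q>0$, the spectral sequence degenerates at $E_2$, and the edge maps become isomorphisms.

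First I would rewrite this cohomology as a colimit over finite covers. By continuity of \'etale cohomology, $\HH^q(\tilde X,\mathbb{Z}/\ell)=\varinjlim \HH^q(X',\mathbb{Z}/\ell)$, the colimit running over pointed connected finite \'etale covers $X'\to X$. Because $\pi_1(\bar X)$ is the kernel of $\pi_1(X)\twoheadrightarrow \Gal(\bar F/F)$, the simply connected pro-cover $\tilde X$ is at the same time the universal cover of $\bar X$; hence this colimit equals $\varinjlim_Y \HH^q(Y,\mathbb{Z}/\ell)$ taken over connected finite \'etale covers $Y\to \bar X$, all of which are smooth projective geometrically connected curves over the separably closed field $\bar F$. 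It therefore suffices to show this geometric colimit vanishes for $q>0$.

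Now I would argue degree by degree. For $q\ge 3$ each term already vanishes, since a curve over a separably closed field has $\ell$-cohomological dimension $2$. For $q=1$, a nonzero class in $\HH^1(Y,\mathbb{Z}/\ell)$ is a surjection $\pi_1(Y)\twoheadrightarrow\mathbb{Z}/\ell$, i.e.\ a connected $\mathbb{Z}/\ell$-cover of $Y$, and it restricts to $0$ on that cover, so the colimit is $0$ (equivalently, $\tilde X$ is simply connected). The decisive case is $q=2$. Here Poincar\'e duality gives $\HH^2(Y,\mathbb{Z}/\ell)\cong\mathbb{Z}/\ell$ through the trace map (using a fixed identification $\mu_\ell\cong\mathbb{Z}/\ell$ over $\bar F$), and for a finite cover $f\colon Y'\to Y$ of degree $d$ the pushforward $f_*$ is compatible with the trace isomorphisms, hence is the identity on $\mathbb{Z}/\ell$; from $f_*f^*=d$ it follows that the transition map $f^*$ is multiplication by $d$. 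Since $\bar X\not\cong\mathbb{P}^1_{\bar F}$ it has genus $\ge 1$, and by Riemann--Hurwitz so does every cover $Y$; consequently $\HH^1(Y,\mathbb{Z}/\ell)\neq 0$ and $Y$ admits a connected $\mathbb{Z}/\ell$-cover of degree $\ell$, along which the transition map on $\HH^2$ is multiplication by $\ell=0$. Thus every class in $\HH^2(Y,\mathbb{Z}/\ell)$ dies in the colimit, which is therefore $0$.

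I expect the main obstacle to be the $q=2$ analysis: pinning down the transition maps as multiplication by the degree (via compatibility of trace maps with pushforward), and then combining this with the geometric input that a positive-genus curve has $\mathbb{Z}/\ell$-covers of degree $\ell$. This is exactly the step where the hypothesis $\bar X\not\cong\mathbb{P}^1_{\bar F}$ is indispensable: for $\mathbb{P}^1$ one has $\pi_1=1$ yet $\HH^2(\mathbb{P}^1_{\bar F},\mathbb{Z}/\ell)\cong\mathbb{Z}/\ell\neq 0$ survives to $\tilde X$, and the conclusion genuinely fails. Setting up the pro-Galois spectral sequence with continuous coefficients and checking naturality of its edge maps is the remaining technical point, but it is standard.
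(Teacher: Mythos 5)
Your proof is correct, but note that the paper itself offers no argument for this proposition: it is quoted directly from Achinger's thesis (\S 2.1.2) and his Compositio paper, where the statement is an instance of the fact that smooth projective curves of genus at least one are $K(\pi,1)$ schemes. What you have written is, in substance, a self-contained proof of that fact specialized to the case at hand, and it follows the same architecture as the cited source: reduce, via the continuous Cartan--Leray spectral sequence of the universal pro-\'etale cover and continuity of \'etale cohomology along the (affine) pro-system, to showing that positive-degree classes on connected finite \'etale covers $Y$ of $\bar{X}$ die on further covers. Your treatment of the three degrees is sound: $q\ge 3$ is cohomological dimension, $q=1$ is tautological, and the crux $q=2$ correctly combines trace-compatibility of $f_*$ with $f_*f^{*}=\deg f$ to conclude that pullback along a connected degree-$\ell$ cover annihilates $\HH^2(Y,\mathbb{Z}/\ell)\cong\mathbb{Z}/\ell$; such a cover exists because every such $Y$ has genus $\ge 1$ by Riemann--Hurwitz, and this is exactly where the hypothesis $\bar{X}\not\cong\mathbb{P}^1_{\bar{F}}$ enters (a genus-zero curve over a separably closed field is $\mathbb{P}^1$, by density of separable points). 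The two steps you defer as ``standard'' genuinely are, and are not gaps: the identification of $\tilde{X}$ with the universal pro-cover of $\bar{X}$ rests on the homotopy exact sequence $1\to\pi_1(\bar{X})\to\pi_1(X)\to\Gal(\bar{F}/F)\to 1$, valid because $X$ is geometrically irreducible, and the continuous Cartan--Leray spectral sequence, with edge maps equal to the comparison maps, is obtained as the filtered colimit of the Hochschild--Serre spectral sequences of the finite Galois subcovers. What your route buys is self-containedness at the level of constant $\mathbb{Z}/\ell$-coefficients; what the paper's citation buys is generality, since Achinger's results apply to locally constant constructible coefficients and to higher-dimensional $K(\pi,1)$'s, which is more than the paper needs.
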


For the remainder of the paper we assume that $\bar{X}$ is not isomorphic to $\mathbb{P}^1_{\bar{F}}$, and we identify $\HH^i(X,\mathbb{Z}/\ell) = \HH^i(\pi_1(X),\mathbb{Z}/\ell)$ for all $i\ge 1$.

We use the following definition of Massey products from \cite[\S1]{Kraines1996},
which differs from Dwyer's definition in \cite{Dwyer1975} by a sign.

\begin{definition}
\label{def:Massey}
Let $t\ge 2$ be an integer, and let $\chi_1,\ldots,\chi_t\in \HH^1(X,\mathbb{Z}/\ell)=\mathrm{Hom}(\pi_1(X),\mathbb{Z}/\ell)$. 
The $t$-fold Massey product $\langle \chi_1,\ldots,\chi_t\rangle$ is the subset of $\HH^2(X,\mathbb{Z}/\ell)$ consisting of the classes of all $2$-cocycles $\nu$ for which there exists a collection of continuous maps $\kappa_{i,j}: \pi_1(X) \to\mathbb{Z}/\ell$, $1\le i\le j\le t$, $(i,j)\ne (1,t)$, such that
\begin{itemize}
\item[(i)] $\kappa_{i,i}=\chi_i$ for $1\le i\le t$, and
\item[(ii)] $(\delta \kappa_{i,j})(\sigma,\tau)=-\sum_{r=i}^{j-1} \kappa_{1,r}(\sigma)\,\kappa_{r+1,j}(\tau)$ for all $\sigma,\tau\in\pi_1(X)$, when $1\le i< j\le t$, $(i,j)\ne (1,t)$, and
\item[(iii)] $\nu(\sigma,\tau)=-\sum_{r=1}^{t-1} \kappa_{1,r}(\sigma)\,\kappa_{r+1,t}(\tau)$ for all $\sigma,\tau\in\pi_1(X)$.
\end{itemize}
Any collection of continuous maps $\{\kappa_{i,j}\}$ satisfying (i)-(iii) is called a defining system for $\langle \chi_1,\ldots,\chi_t\rangle$.  
\end{definition}

Massey products generalize cup products, since if $t=2$ then the only defining system for $\langle \chi_1,\chi_2\rangle$ is $\{\chi_1,\chi_2\}$, and $\langle \chi_1,\chi_2\rangle = \{- \chi_1\cup\chi_2\}$.

The definition of Massey products can be motivated as follows. Let $U_{t+1}(\mathbb{Z}/\ell)$ be the group of upper triangular unipotent (a.k.a. unitriangular) matrices of size $(t+1)\times(t+1)$ with entries in $\mathbb{Z}/\ell$,  and let $Z(U_{t+1}(\mathbb{Z}/\ell))$ be its center, which consists of the unitriangular matrices for which all entries above the main diagonal that are not at the $(1,t+1)$ position are zero.  The data of a defining system is equivalent to giving a continuous group homomorphism 
\begin{equation}
\label{eq:rhobar}
\begin{array}{cccc}
\vartheta:\quad &\pi_1(X) &\to& U_{t+1}(\mathbb{Z}/\ell)/Z(U_{t+1}(\mathbb{Z}/\ell))\\
&\sigma & \mapsto & \left(\begin{array}{ccccc}
1&\kappa_{1,1}(\sigma)&\cdots & \kappa_{1,t-1}(\sigma)&*\\
0&1&\kappa_{2,2}(\sigma)&\cdots&\kappa_{2,t}(\sigma)\\
\vdots&\ddots&\ddots&\ddots&\vdots\\
0&\cdots&0&1&\kappa_{t,t}(\sigma)\\
0&\cdots&\cdots&0&1\end{array}\right) 
\end{array}
\end{equation}
with $\kappa_{i,i}$ being the character $\chi_i$ for all $i$.  There is a continuous group homomorphism $\rho=\rho(\vartheta): \pi_1(X) \to U_{t+1}(\mathbb{Z}/\ell)$ lifting $\vartheta$ if and only if the $2$-cocycle $\nu$ is the coboundary of a continuous function $\kappa_{1,t}:\pi_1(X) \to \mathbb{Z}/\ell$.  Thus  $\langle \chi_1,\ldots,\chi_t\rangle$ is not empty if and only if a homomorphism $\vartheta$ as in (\ref{eq:rhobar}) exists,
and $\langle \chi_1,\ldots,\chi_t\rangle$ contains $0$ if and only if there is such a $\vartheta$ that has a lift $\rho(\vartheta)$ to $U_{t+1}(\mathbb{Z}/\ell)$.  For more details, see \cite[Thm. 2.4]{Dwyer1975} and also \cite[Lemma 4.2]{MinacTan2015}.

We now summarize some useful properties of Massey products.

Let $t\ge 2$, and let $\chi_1,\ldots,\chi_t\in \HH^1(X,\mathbb{Z}/\ell)$ be non-zero characters such that the $t$-fold Massey product $\langle \chi_1,\ldots,\chi_t\rangle$ is not empty. 

It follows from Definition \ref{def:Massey} and its connection to continuous group homomorphisms as in (\ref{eq:rhobar}) that $\langle \chi_1,\chi_2,\ldots,\chi_e\rangle$ contains $0$ for all $2\le e\le t-1$, and $\langle \chi_s,\ldots,\chi_{t-1},\chi_t\rangle$ contains $0$ for all $2\le s\le t-1$.

By \cite[(2.3)]{Kraines1996}, we have for all $c_1,\ldots,c_t\in \mathbb{Z}/\ell$ that
\begin{equation}
\label{eq:Masseyscalar}
\langle c_1\chi_1,\ldots,c_t\chi_t\rangle \supseteq (c_1\cdots c_t)\,\langle \chi_1,\ldots,\chi_t\rangle.
\end{equation}
Now suppose that all $c_1,\ldots,c_t$ are non-zero. Writing $\chi_i=c_i^{-1}(c_i\chi_i)$, for $1\le i\le t$, and applying (\ref{eq:Masseyscalar}) again, we obtain
\begin{equation}
\label{eq:Masseyscalar!}
\langle c_1\chi_1,\ldots,c_t\chi_t\rangle = (c_1\cdots c_t)\,\langle \chi_1,\ldots,\chi_t\rangle.
\end{equation}
In particular, if $\chi\in\HH^1(X,\mathbb{Z}/\ell)$ is a single non-zero character and $a_1,\ldots,a_t\in (\mathbb{Z}/\ell)^\times$, then
\begin{equation}
\label{eq:ohyeah!}
\langle a_1\chi,\ldots,a_t\chi\rangle \quad\mbox{contains $0$} \quad\mbox{if and only if}\quad 
\langle \,\underbrace{\chi,\ldots,\chi}_{t\;\mathrm{copies}}\,\rangle\quad\mbox{contains $0$.}
\end{equation}

Because of (\ref{eq:ohyeah!}), it is useful to introduce a ``restricted" $t$-fold Massey product when all characters are the same (see \cite[\S3]{Kraines1996}). Namely, when all of $\chi_1,\ldots,\chi_t$ equal a single character $\chi$, then the restricted $t$-fold Massey product 
\begin{equation}
\label{eq:Masseyrestrict}
\langle \chi \rangle^t \subseteq \langle \,\underbrace{\chi,\ldots,\chi}_{t\;\mathrm{copies}}\, \rangle
\end{equation} 
is defined to be the subset of $\HH^2(X,\mathbb{Z}/\ell)$ consisting of the classes of all $2$-cocycles $\nu$ as in Definition \ref{def:Massey} that are associated to defining systems for which the functions $\kappa_{i,j}$ only depend on $i+j$, for $1\le i<j\le t$, $(i,j)\ne (1,t)$.
If $t=\ell$, then it follows from \cite[Thm. 14]{Kraines1996} that $\langle \chi \rangle^\ell$ is non-empty and a singleton given by
\begin{equation}
\label{eq:Bockstein}
\langle \chi \rangle^\ell = \{-\beta(\chi)\}
\end{equation}
where $\beta$ is the Bockstein operator associated to the exact sequence
$$0\to\mathbb{Z}/\ell \to \mathbb{Z}/\ell^2\to \mathbb{Z}/\ell\to 0.$$

In later sections, we will focus on triple Massey products. These are easier to describe than general Massey products, as highlighted in the following remark.

\begin{remark}
\label{rem:tripleMassey}
Let $\chi_1,\chi_2,\chi_3\in \HH^1(X,\mathbb{Z}/\ell)$. Then the triple Massey product $\langle \chi_1,\chi_2,\chi_3\rangle$ is not empty if and only if $\chi_1 \cup \chi_2 = 0 = \chi_2 \cup \chi_3$ in $\HH^2(X,\mathbb{Z}/\ell)$. Suppose $\kappa=\{\kappa_{1,1},\kappa_{1,2},\kappa_{2,2},\kappa_{2,3},\kappa_{3,3}\}$ is a defining system for $\langle \chi_1,\chi_2,\chi_3\rangle$. In particular, $\kappa_{i,i}=\chi_i$ for $1\le i\le 3$. Then all defining systems can be obtained from $\kappa$ by adding a continuous homomorphism $f_{1,2}:\pi_1(X)\to\mathbb{Z}/\ell$ to $\kappa_{1,2}$ or by adding a continuous homomorphism $f_{2,3}:\pi_1(X)\to\mathbb{Z}/\ell$ to $\kappa_{2,3}$ (or both). This means that the 2-cocycle $\nu$, with
$$\nu(\sigma,\tau) = - \kappa_{1,1}(\sigma)\kappa_{2,3}(\tau) - \kappa_{1,2}(\sigma)\kappa_{3,3}(\tau)=-\chi_1(\sigma)\kappa_{2,3}(\tau) - \kappa_{1,2}(\sigma)\chi_3(\tau)$$
for all $\sigma,\tau\in\pi_1(X)$, gives a single well-defined element $[\nu]$ in the quotient group 
\begin{equation}
\label{eq:quotienttriple}
\frac{\HH^2(X,\mathbb{Z}/\ell)}{\HH^1(X,\mathbb{Z}/\ell)\cup \chi_3 + \chi_1 \cup \HH^1(X,\mathbb{Z}/\ell)}.
\end{equation}
In particular, $\langle \chi_1,\chi_2,\chi_3\rangle$ contains $0$ if and only if $[\nu]$ is the identity element of the quotient group (\ref{eq:quotienttriple}).
\end{remark}

In the next remark we summarize some important properties of the group $U_4(\mathbb{Z}/\ell)$ of unitriangular $4\times 4$ matrices over $\mathbb{Z}/\ell$ that we will need in later sections.

\begin{remark}
\label{rem:U4!}
Let $\ell\ge 3$ and let
\begin{equation}
\label{eq:matrixnotation}
M=M(a_1,a_2,a_3,u,v,w):= \begin{pmatrix} 
1&a_1&u&v\\
0&1&a_2&w\\
0&0&1&a_3\\ 
0&0&0&1
\end{pmatrix}
\end{equation}
in $U_4(\mathbb{Z}/\ell)$ for $a_1,a_2,a_3,u,v,w\in\mathbb{Z}/\ell$. It is well known that 
\begin{equation}
\label{eq:powers}
M^\ell = \begin{pmatrix} 
1&\ell a_1&\ell u + \binom{\ell}{2} a_1a_2 &
\ell v + \binom{\ell}{2} a_1w + \binom{\ell}{2} a_3u + \binom{\ell}{3} a_1a_2a_3\\
0&1&\ell a_2&\ell w + \binom{\ell}{2} a_2a_3\\
0&0&1&\ell a_3\\ 
0&0&0&1\end{pmatrix}.
\end{equation}
In particular, if $\ell>3$ then every non-identity element of $U_4(\mathbb{Z}/\ell)$ has order $\ell$. On the other hand, $U_4(\mathbb{Z}/3)$ contains elements of order 9, which are precisely the matrices $M$ in (\ref{eq:matrixnotation}) with $a_1a_2a_3\ne 0$. 

For $\ell\ge 3$, we have the following formula of the commutator
$[M,\tilde{M}]=M\tilde{M}M^{-1}\tilde{M}^{-1}$ when 
$M$ is above and $\tilde{M}=M(\tilde{a}_1,\tilde{a}_2,\tilde{a}_3,\tilde{u},\tilde{v},\tilde{w})$:
\begin{equation}
\label{eq:commutator}
[M,\tilde{M}] = 
\begin{pmatrix}
1&0&a_1\tilde{a}_2-a_2\tilde{a}_1&
(a_1\tilde{w}-w\tilde{a}_1)-(a_3\tilde{u}-\tilde{a}_3u) - (a_1\tilde{a}_2-a_2\tilde{a}_1)(a_3 + \tilde{a}_3)
\\
0&1&0&a_2\tilde{a}_3-a_3\tilde{a}_2\\
0&0&1&0\\ 
0&0&0&1
\end{pmatrix}.
\end{equation}
In particular, the commutator subgroup $H_1$ of $U_4(\mathbb{Z}/\ell)$ is the subgroup of matrices $M$ as  in (\ref{eq:matrixnotation}) for which $a_1=a_2=a_3 = 0$. It follows from (\ref{eq:commutator}) that $H_1$ is an abelian subgroup of $U_4(\mathbb{Z}/\ell)$, which means that the second derived subgroup of $U_4(\mathbb{Z}/\ell)$ is trivial. As usual, the center of $U_4(\mathbb{Z}/\ell)$ consists of all matrices $M$ as in (\ref{eq:matrixnotation}) with $a_1=a_2=a_3=u=w=0$.

When $\ell=3$, we will also need the subgroup $H$ of $U_4(\mathbb{Z}/3)$ consisting of all matrices of the form
\begin{equation}
\label{eq:matrixnotation2}
N=N(a,u,v,w):= \begin{pmatrix} 
1&a&u&v\\
0&1&a&w\\
0&0&1&a\\ 
0&0&0&1
\end{pmatrix}.
\end{equation}
In particular, $H_1\le H$. It follows that the matrices of order $9$ in $H$ are precisely the matrices in $H-H_1$. By (\ref{eq:commutator}), the center of $H$ consists of all matrices $N$ as in (\ref{eq:matrixnotation2}) with $a=0$ and $u=w$. Moreover, the commutator subgroup $H'$ of $H$ is the subgroup of $Z(H)$ consisting of all matrices $N$ as in (\ref{eq:matrixnotation2}) with $a=0$ and $u=w=0$.
\end{remark}

The following result on cup products will be important in the next sections.

\begin{lemma}
\label{lem:cupp} 
Let $\chi,\psi\in \HH^1(X,\mathbb{Z}/\ell)$ with restrictions $\bar{\chi},\bar{\psi}$ to $\HH^1(\bar{X},\mathbb{Z}/\ell)$. Suppose $\bar{\chi}=0$, $\chi\ne 0$, and $\chi\cup\psi=0$. If the $\ell$-torsion $\mathrm{Pic}(\bar{X})[\ell]$ is defined over $F$, then $\bar{\psi}=0$. 
\end{lemma}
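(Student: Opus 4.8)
The plan is to reduce the lemma to the $G_F$-module structure of $\HH^1(\bar{X},\mathbb{Z}/\ell)$, where $G_F=\Gal(\bar{F}/F)$ acts as the quotient in the extension $1\to\pi_1(\bar{X})\to\pi_1(X)\to G_F\to 1$. First I would extract two facts from inflation–restriction. Since $\bar\chi=\mathrm{res}(\chi)=0$ while $\chi\neq 0$, the five-term exact sequence of this extension shows that $\chi$ is the inflation of a \emph{nonzero} character $\phi\in\HH^1(G_F,\mathbb{Z}/\ell)$. At the same time, $\bar\psi=\mathrm{res}(\psi)$ automatically lies in the invariants $\HH^1(\bar{X},\mathbb{Z}/\ell)^{G_F}$.

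Next I would pin down the module structure using Kummer theory. Over the separably closed field $\bar{F}$ the unit group $\bar{F}^{\times}$ is $\ell$-divisible, so the Kummer sequence gives a $G_F$-equivariant isomorphism $\HH^1(\bar{X},\mu_\ell)\cong\Pic(\bar{X})[\ell]$, and therefore $\HH^1(\bar{X},\mathbb{Z}/\ell)\cong\Pic(\bar{X})[\ell]\otimes\mu_\ell^{\otimes-1}$ as $G_F$-modules. The hypothesis that $\Pic(\bar{X})[\ell]$ is defined over $F$ means precisely that $G_F$ acts trivially on it; writing $\Pic(\bar{X})[\ell]\cong(\mathbb{Z}/\ell)^{r}$, we get $\HH^1(\bar{X},\mathbb{Z}/\ell)\cong(\mu_\ell^{\otimes-1})^{\oplus r}$, i.e. $G_F$ acts by the inverse mod-$\ell$ cyclotomic character $\omega^{-1}$.

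Now I would split into two cases. If $\mu_\ell\not\subseteq F$ then $\omega\neq 1$, so $(\mu_\ell^{\otimes-1})^{G_F}=0$ and $\bar\psi=0$ follows immediately from the first step, with no use of the cup-product hypothesis. If $\mu_\ell\subseteq F$ then $G_F$ acts \emph{trivially} on $\HH^1(\bar{X},\mathbb{Z}/\ell)$, and here the hypothesis $\chi\cup\psi=0$ does the work. Since $\mathrm{res}(\chi\cup\psi)=\bar\chi\cup\bar\psi=0$, the class $\chi\cup\psi$ lies in the first step $F^1\HH^2(X,\mathbb{Z}/\ell)$ of the Hochschild–Serre filtration; by multiplicativity of the spectral sequence its image in the graded piece $E^{1,1}_\infty$ equals the product of $\mathrm{gr}^1(\chi)=\phi\in E^{1,0}_\infty$ and $\mathrm{gr}^0(\psi)=\bar\psi\in E^{0,1}_\infty$, namely $\phi\cup\bar\psi$. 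Because no differential enters $E^{1,1}$, the group $E^{1,1}_\infty$ is a subgroup of $E^{1,1}_2=\HH^1(G_F,\HH^1(\bar{X},\mathbb{Z}/\ell))$, so $\chi\cup\psi=0$ forces $\phi\cup\bar\psi=0$ there. Under the trivial action we have $\HH^1(G_F,\HH^1(\bar{X},\mathbb{Z}/\ell))=\HH^1(G_F,\mathbb{Z}/\ell)\otimes_{\mathbb{Z}/\ell}\HH^1(\bar{X},\mathbb{Z}/\ell)$ and $\phi\cup\bar\psi=\phi\otimes\bar\psi$; since $\phi\neq 0$ and $\mathbb{Z}/\ell$ is a field, $\bar\psi=0$.

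I expect the multiplicativity step to be the main obstacle to write cleanly: one must justify that $\chi\cup\psi=0$ implies the vanishing of the leading term $\phi\cup\bar\psi$. I would either invoke the standard compatibility of cup products with the Hochschild–Serre filtration, or verify it directly on cochains—writing $\chi=\mathrm{inf}(\phi)$ and choosing $f$ with $\delta f=\chi\cup\psi$, then restricting the identity $\phi(\bar\sigma)\,\psi(\tau)=\delta f(\sigma,\tau)$ to $\tau\in\pi_1(\bar{X})$ in order to exhibit $\bar\sigma\mapsto\phi(\bar\sigma)\,\bar\psi$ as the zero homomorphism in $\HH^1(G_F,\HH^1(\bar{X},\mathbb{Z}/\ell))$. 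The remaining steps are formal.
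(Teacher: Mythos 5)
Your proof is correct, but it takes a genuinely different route from the paper's. The paper argues by contradiction with elementary group theory: assuming $\bar{\psi}\neq 0$, the hypothesis $\chi\cup\psi=0$ yields a homomorphism $\rho:\pi_1(X)\to U_3(\mathbb{Z}/\ell)$, which is shown to be surjective; then $\rho(\pi_1(\bar{X}))$ is a normal elementary abelian subgroup of order $\ell^2$, and since $\rho$ kills the subgroup $T$ generated by commutators and $\ell$-th powers, the hypothesis that $G_F$ acts trivially on $\pi_1(\bar{X})/T\cong\mathrm{Pic}(\bar{X})[\ell]$ forces the quotient $\rho(\pi_1(X))/\rho(\pi_1(\bar{X}))$ to centralize $\rho(\pi_1(\bar{X}))$, making $U_3(\mathbb{Z}/\ell)$ abelian --- a contradiction. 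You instead factor the statement through homological algebra: inflation--restriction gives $\chi=\mathrm{inf}(\phi)$ with $\phi\neq 0$, and Hochschild--Serre multiplicativity converts $\chi\cup\psi=0$ into $\phi\cup\bar{\psi}=0$ in $E_2^{1,1}=\HH^1(G_F,\HH^1(\bar{X},\mathbb{Z}/\ell))$, which under the trivial action and field coefficients kills $\bar{\psi}$. Your two technical claims are both sound: no differential enters the $(1,1)$ spot, so $E_\infty^{1,1}\subseteq E_2^{1,1}$ and the $E_\infty$-product is the restriction of the $E_2$-product; and your cochain fallback does work --- restricting a cochain $f$ with $\delta f=\chi\cup\psi$ to $\pi_1(\bar{X})$ gives a homomorphism $\bar{f}$, and one obtains the identity $\phi(\bar\sigma)\,\bar\psi=\bar{f}-{}^{\sigma}\bar{f}$ (where ${}^{\sigma}\bar{f}(\tau)=\bar{f}(\sigma\tau\sigma^{-1})$), whose right side vanishes identically when the action is trivial; unwound, this is precisely the paper's commutator computation in $U_3(\mathbb{Z}/\ell)$, with $f$ playing the role of the corner entry $\kappa$ of $\rho$. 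Two simplifications are available. First, the Kummer-theoretic twist is unnecessary: the duality $\HH^1(\bar{X},\mathbb{Z}/\ell)=\Hom(\pi_1(\\bar{X}),\mathbb{Z}/\ell)\cong\Hom(\mathrm{Pic}(\bar{X})[\ell],\mathbb{Z}/\ell)$ shows directly that a trivial $G_F$-action on $\mathrm{Pic}(\bar{X})[\ell]$ gives a trivial action on $\HH^1(\bar{X},\mathbb{Z}/\ell)$, with no case split on the cyclotomic character. Second, your case $\mu_\ell\not\subseteq F$ is in fact vacuous: since $\bar{X}\not\cong\mathbb{P}^1_{\bar{F}}$ we have $g(\bar{X})\geq 1$, so $\mathrm{Pic}(\bar{X})[\ell]\cong(\mathbb{Z}/\ell)^{2g}\neq 0$, and the Galois equivariance of the non-degenerate Weil pairing shows that a trivial action on it already forces $\mu_\ell\subseteq F$ (the paper makes this observation for $\ell=3$ in Remark \ref{rem:Kim}); your argument there remains logically valid, it just never applies. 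What the paper's route buys is a short, self-contained argument in the unipotent-matrix language used throughout the paper; what yours buys is a conceptual factorization that isolates exactly where each hypothesis (the vanishing cup product, the rationality of the $\ell$-torsion) enters.

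Erratum to the above: the expression $\Hom(\pi_1(\\bar{X}),\mathbb{Z}/\ell)$ should of course read $\Hom(\pi_1(\bar{X}),\mathbb{Z}/\ell)$.
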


\begin{proof}
 Suppose by way of contradiction that $\bar{\psi}\ne 0$.   Since $\chi\cup\psi=0$, there exists a continuous function $\kappa: \pi_1(X)\to \mathbb{Z}/\ell$ such that 
$$\begin{array}{cccc}
\rho:& \pi_1(X) &\to& U_3(\mathbb{Z}/\ell)\\
&g & \mapsto & {\left(\begin{array}{ccc}
1&\chi(g)&\kappa(g)\\
0&1&\psi(g)\\
0&0&1\end{array}\right) }
\end{array}$$
is a continuous group homomorphism. Since $\bar{\chi}=0$ but $\bar{\psi}\ne 0$, $\psi$ is not in the group of characters generated by $\chi$.  Hence the image  of $\rho$ surjects onto the quotient of $U_3(\mathbb{Z}/\ell)$ by its center.  If $\rho$ is not surjective then the image of $\rho$  has order $\ell^2$ so is abelian.  Since $U_3(\mathbb{Z}/\ell)$ is generated by this image and its center, this would force $U_3(\mathbb{Z}/\ell)$ to  be abelian, which is  a contradiction.  So $\rho$ is surjective, and in particular the image of $\rho$ is not abelian.

Since $\bar{\chi}=0$, the image $\rho(\pi_1(\bar{X}))$ lies in an elementary abelian $\ell$-subgroup of $U_3(\mathbb{Z}/\ell)$. Since $\pi_1(\bar{X})$ is a normal subgroup of $\pi_1(X)$ and $\rho$ is surjective, it follows that $\rho(\pi_1(\bar{X}))$ is a normal subgroup of $U_3(\mathbb{Z}/\ell)$. Moreover, $\bar{\chi}=0$ implies that $\#\rho(\pi_1(\bar{X}))\le \ell^2$. On the other hand, $\bar{\psi}\ne 0$ implies that $\#\rho(\pi_1(\bar{X}))>\ell$ since the only normal subgroup of $U_3(\mathbb{Z}/\ell)$ of order $\ell$ is its center. Therefore, $\rho(\pi_1(\bar{X}))$ is an elementary abelian subgroup of $U_3(\mathbb{Z}/\ell)$ of order $\ell^2$. In particular, $\rho$ is trivial on the subgroup $T$ of $\pi_1(\bar{X})$ generated by commutators and by $\ell^{\mathrm{th}}$ powers, where $\pi_1(\bar{X})/T$ is the maximal elementary abelian $\ell$-quotient group of $\pi_1(\bar{X})$.  This group is isomorphic to $\mathrm{Pic}(\bar{X})[\ell]$.  By our assumption, this latter group is defined over $F$, which means that $\mathrm{Gal}(\bar{F}/F)$ acts trivially on $\pi_1(\bar{X})/T$. This implies that 
$\rho(\pi_1(\bar{X}))$ is an elementary abelian  subgroup of 
$\rho(\pi_1(X)) =  U_3(\mathbb{Z}/\ell)$ of order $\ell^2$, and the quotient group 
$\rho(\pi_1(X))/\rho(\pi_1(\bar{X}))$ is of order $\ell$ and acts trivially on
$\rho(\pi_1(\bar{X}))$.  This forces $\rho(\pi_1(X)) = U_3(\mathbb{Z}/\ell)$ to be abelian, which is not true.  The contradiction completes the proof.
\end{proof}

%%%%%%%%%%%%%%%%%%%%%%%%%%%%%%%%%%%%%%%%%%%%%

\section{Restriction of Massey products on curves to the separable closure}
\label{s:restrict}

We make the same assumptions as in the previous section. In other words, $F$ is a field with a fixed separable closure $\bar{F}$ inside a fixed algebraic closure $F^\mathrm{alg}$, and $X$ is a smooth projective geometrically irreducible curve over $F$ such that $\bar{X}= X\otimes_F \bar{F}$ is not isomorphic to $\mathbb{P}^1_{\bar{F}}$. Moreover, $\ell$ is a prime number such that $\ell$ is invertible in $F$. Let $G_F=\mathrm{Gal}(\bar{F}/F)$.

When $F$ is the algebraic closure of a finite field and $\mathbb{Z}/\ell$ is replaced by $\mathbb{Q}_\ell$, Deligne, Griffiths, Morgan and Sullivan  discuss the connection between the Weil conjectures and the vanishing of all higher order Massey products in \cite[p. 246]{DeligneEtAl}. 

We now return to the case with coefficients in $\mathbb{Z}/\ell$ over an arbitrary field $F$.

\begin{proposition}
\label{prop:MasseyOverClosure}
Suppose $t\ge 3$, $\chi_1,\ldots,\chi_t\in \HH^1(X,\mathbb{Z}/\ell)$, and let $\bar{\chi}_1,\ldots,\bar{\chi}_t$ denote their restrictions to $\HH^1(\bar{X},\mathbb{Z}/\ell)$. If the $t$-fold Massey product $\langle\chi_1,\ldots,\chi_t\rangle$ is not empty, then the $t$-fold Massey product $\langle \bar{\chi}_1,\ldots,\bar{\chi}_t\rangle$ is non-empty and contains $0$.
\end{proposition}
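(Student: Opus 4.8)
\emph{The plan is to work throughout with the reformulation of Massey products as homomorphisms into unipotent groups recalled around~\eqref{eq:rhobar}.} Non-emptiness is immediate: a defining system for $\langle\chi_1,\dots,\chi_t\rangle$ over $X$ is the same as a continuous homomorphism $\vartheta\colon\pi_1(X)\to U_{t+1}(\mathbb{Z}/\ell)/Z(U_{t+1}(\mathbb{Z}/\ell))$ with diagonal entries $\chi_1,\dots,\chi_t$, and restricting the maps $\kappa_{i,j}$ to the subgroup $\pi_1(\bar X)\le\pi_1(X)$ (equivalently, restricting $\vartheta$) manifestly preserves conditions (i)--(iii) of Definition~\ref{def:Massey}, since these hold identically on $\pi_1(X)$. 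Thus $\bar\vartheta:=\vartheta|_{\pi_1(\bar X)}$ is a defining system for $\langle\bar\chi_1,\dots,\bar\chi_t\rangle$, which is therefore non-empty. By the discussion after~\eqref{eq:rhobar}, showing that $\langle\bar\chi_1,\dots,\bar\chi_t\rangle$ contains $0$ is the same as showing that $\bar\vartheta$ admits a lift to a continuous homomorphism $\pi_1(\bar X)\to U_{t+1}(\mathbb{Z}/\ell)$.

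The key geometric input I would record at the outset is Poincar\'e duality: since $\bar X$ is a smooth projective geometrically irreducible curve over the separably closed field $\bar F$ and $\bar X\not\cong\mathbb{P}^1$, we have $\HH^2(\bar X,\mathbb{Z}/\ell)\cong\mathbb{Z}/\ell$ and the cup product $\HH^1(\bar X,\mathbb{Z}/\ell)\times\HH^1(\bar X,\mathbb{Z}/\ell)\to\HH^2(\bar X,\mathbb{Z}/\ell)$ is a perfect pairing; using Proposition~\ref{prop:Achinger} I identify these with the corresponding group cohomology of $\pi_1(\bar X)$. The $2$-cocycle $\bar\nu$ of the restricted defining system then has a class $[\bar\nu]$ in $\HH^2(\bar X,\mathbb{Z}/\ell)\cong\mathbb{Z}/\ell$, and $0\in\langle\bar\chi_1,\dots,\bar\chi_t\rangle$ if and only if some defining system achieves $[\bar\nu]=0$. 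I would then split on whether $\bar\chi_1$ vanishes.

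If $\bar\chi_1\ne0$, I would use the indeterminacy supplied by the function $\bar\kappa_{2,t}$: it occurs only in its own relation in~(ii) and in the term $\kappa_{1,1}\cup\kappa_{2,t}$ of $\nu$, so replacing $\bar\kappa_{2,t}$ by $\bar\kappa_{2,t}+g$ for a homomorphism $g\colon\pi_1(\bar X)\to\mathbb{Z}/\ell$ keeps a defining system (as $\delta g=0$) and changes $[\bar\nu]$ by $-\,\bar\chi_1\cup g$. Because the pairing is perfect, $\dim_{\mathbb{Z}/\ell}\HH^2(\bar X,\mathbb{Z}/\ell)=1$, and $\bar\chi_1\ne0$, the map $g\mapsto\bar\chi_1\cup g$ is surjective onto $\HH^2(\bar X,\mathbb{Z}/\ell)$, so $g$ can be chosen with $[\bar\nu]=0$. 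If instead $\bar\chi_1=0$, I would produce the lift directly: deleting the first row and column defines a continuous homomorphism $U_{t+1}(\mathbb{Z}/\ell)/Z(U_{t+1}(\mathbb{Z}/\ell))\to U_t(\mathbb{Z}/\ell)$, and composing with $\bar\vartheta$ gives a homomorphism $\rho'\colon\pi_1(\bar X)\to U_t(\mathbb{Z}/\ell)$ with diagonal $\bar\chi_2,\dots,\bar\chi_t$ (every entry involved is defined, the unique missing entry of $\bar\vartheta$ being the $(1,t+1)$ corner), so $0\in\langle\bar\chi_2,\dots,\bar\chi_t\rangle$. Embedding via $A\mapsto\mathrm{diag}(1,A)$ then yields a genuine homomorphism $\pi_1(\bar X)\to U_{t+1}(\mathbb{Z}/\ell)$ with diagonal $0,\bar\chi_2,\dots,\bar\chi_t$, i.e.\ a lift of $\bar\vartheta$; hence $0\in\langle\bar\chi_1,\dots,\bar\chi_t\rangle$.

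The delicate point is precisely the case $\bar\chi_1=0$: here the cup-product indeterminacy degenerates, since $g\mapsto\bar\chi_1\cup g$ is now zero, and one can no longer correct $[\bar\nu]$ cohomologically. The resolution is the block/truncation construction above, which rests on the elementary observation that truncating a length-$t$ defining system yields a \emph{fully lifted} length-$(t-1)$ one. The routine verifications I would still carry out are that restriction and truncation of defining systems are compatible with (i)--(iii), and that $\kappa_{2,t}$ contributes exactly the cup product $\bar\chi_1\cup g$ to the variation of $[\bar\nu]$ and appears in no other defining relation, so that the indeterminacy computation in the case $\bar\chi_1\ne0$ is legitimate.
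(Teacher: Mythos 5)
Your proposal is correct and follows essentially the same route as the paper: non-emptiness by restricting a defining system to $\pi_1(\bar{X})$, and then, when $\bar{\chi}_1\neq 0$, using the freedom to add a homomorphism $g$ to $\bar{\kappa}_{2,t}$ together with the non-degeneracy of the cup product pairing and $\dim_{\mathbb{Z}/\ell}\HH^2(\bar{X},\mathbb{Z}/\ell)=1$ to kill $[\bar{\nu}]$. The paper dismisses the case $\bar{\chi}_1=0$ as obvious, and your truncation/block-diagonal construction just makes that step explicit; the only imprecision is that $\sigma\mapsto\mathrm{diag}(1,\rho'(\sigma))$ is a lift not of $\bar{\vartheta}$ itself but of the defining system obtained from it by zeroing out the first row, which is all that is needed since containing $0$ only requires \emph{some} defining system with the prescribed diagonal to lift to $U_{t+1}(\mathbb{Z}/\ell)$.
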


\begin{proof}
If $\bar{\chi}_1= 0$, then $\langle\bar{\chi}_1,\bar{\chi}_2,\ldots,\bar{\chi}_t\rangle=\langle 0,\bar{\chi}_2,\ldots,\bar{\chi}_t\rangle$ obviously contains $0$.

Suppose now that $\bar{\chi}_1\ne 0$. By our assumption, we have continuous functions $\kappa_{i,j}:\pi_1(X)\to \mathbb{Z}/\ell$, for $1\le i<j\le t$, $(i,j)\ne (1,t)$, such that there is a continuous group homomorphism
$$\vartheta: \pi_1(X) \to U_{t+1}(\mathbb{Z}/\ell)/Z(U_{t+1}(\mathbb{Z}/\ell))$$ 
as in (\ref{eq:rhobar}).
Then $\vartheta$ restricts to a continuous group homomorphism $\pi_1(\bar{X}) \to U_{t+1}(\mathbb{Z}/\ell)/Z(U_{t+1}(\mathbb{Z}/\ell))$ by restricting the continuous functions $\kappa_{i,j}$ to $\pi_1(\bar{X})$. We denote these latter restrictions by $\bar{\kappa}_{i,j}$. In particular, $\langle \bar{\chi}_1,\ldots, \bar{\chi}_t\rangle$ contains the class in $\HH^2(\bar{X},\mathbb{Z}/\ell)$ of the two-cocycle $\bar{\nu}$ with
$$\bar{\nu}(\sigma,\tau)= -\bar{\chi}_1(\sigma)\bar{\kappa}_{2,t}(\tau)-\bar{\kappa}_{1,2}(\sigma)\bar{\kappa}_{3,t}(\tau) - \cdots - \bar{\kappa}_{1,t-2}(\sigma)\bar{\kappa}_{t-1,t}(\tau)-\bar{\kappa}_{1,t-1}(\sigma)\bar{\chi}_t(\tau)$$
for all $\sigma,\tau\in \pi_1(\bar{X})$. We are free to add to $\bar{\kappa}_{2,t}$ any element of $\HH^1(\bar{X},\mathbb{Z}/\ell)$. Since we have assumed that $\bar{\chi}_1\ne 0$, we can  adjust $\bar{\kappa}_{2,t}$ in this way, using Remark \ref{rem:voodoo}, to make the class of $\bar{\nu}$ trivial in $\HH^2(\bar{X},\mathbb{Z}/\ell)$ since the Weil pairing is non-degenerate and $\HH^2(\bar{X},\mathbb{Z}/\ell)$ is one-dimensional over $\mathbb{Z}/\ell$. Note that in this process we may have to add to $\bar{\kappa}_{2,t}$ an element of $\HH^1(\bar{X},\mathbb{Z}/\ell)$ that is not the restriction of an element of $\HH^1(X,\mathbb{Z}/\ell)$, though if $\HH^1(X,\mathbb{Z}/\ell)\to \HH^1(\bar{X},\mathbb{Z}/\ell)$ is surjective, we can assume we have such a restriction.
\end{proof}

\begin{corollary}
\label{cor:duh}
Let $t\ge 3$ and $\psi_1,\ldots,\psi_t\in \HH^1(\bar{X},\mathbb{Z}/\ell)$. If $\langle\psi_1,\ldots,\psi_t\rangle$ is not empty then it contains $0$.
\end{corollary}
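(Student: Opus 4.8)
The plan is to deduce this immediately from Proposition \ref{prop:MasseyOverClosure}, applied with the ground field taken to be the separably closed field $\bar{F}$ in place of $F$ and the curve taken to be $\bar{X}$ in place of $X$. The point is that for a separably closed field the ``restriction to the separable closure'' operation becomes the identity, so the conclusion of the proposition (that the restricted product is nonempty and contains $0$) becomes a statement about the original product itself.

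Concretely, I would first record that $\bar{F}$ is separably closed, so a fixed separable closure of $\bar{F}$ is $\bar{F}$ itself; hence the curve obtained from $\bar{X}$ by base change to the separable closure of $\bar{F}$ is $\bar{X}\otimes_{\bar{F}}\bar{F}=\bar{X}$, and the restriction map $\HH^1(\bar{X},\mathbb{Z}/\ell)\to\HH^1(\bar{X},\mathbb{Z}/\ell)$ on first cohomology is the identity. Thus each $\psi_i$ is its own restriction, and $\langle\psi_1,\ldots,\psi_t\rangle$ coincides with the restricted product appearing in the proposition. Next I would check that the hypotheses of Proposition \ref{prop:MasseyOverClosure} are met in this setting: the prime $\ell$ is invertible in $\bar{F}$ since it is invertible in $F$; the base change $\bar{X}$ is again a smooth projective geometrically irreducible curve over $\bar{F}$; and $\bar{X}$ is not isomorphic to $\PP^1_{\bar{F}}$, which is part of our standing assumption. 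Since $t\ge 3$ is exactly the range required by the proposition, all hypotheses hold verbatim.

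Granting these points, the proposition applied to $\bar{X}/\bar{F}$ asserts that if $\langle\psi_1,\ldots,\psi_t\rangle$ is nonempty then its restriction to the separable closure — namely $\langle\psi_1,\ldots,\psi_t\rangle$ itself — is nonempty and contains $0$, which is precisely the claim. The only thing to verify with any care is the bookkeeping in the previous paragraph, i.e.\ that passing to $\bar{F}$ as the base field really does reduce the restricted Massey product in the proposition to the ambient Massey product here; this is routine, since all of the continuous cochains $\kappa_{i,j}$ in Definition \ref{def:Massey} are already defined on $\pi_1(\bar{X})$ and no further restriction takes place. I do not anticipate a genuine obstacle: the entire content of the corollary is that Proposition \ref{prop:MasseyOverClosure} specializes cleanly when $F$ is already separably closed.
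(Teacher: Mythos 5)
Your proposal is correct and is exactly the paper's (implicit) argument: the paper states Corollary \ref{cor:duh} with no separate proof, as an immediate specialization of Proposition \ref{prop:MasseyOverClosure} to the case where the base field is the separably closed field $\bar{F}$ itself, so that restriction to the separable closure is the identity. Your bookkeeping — that $\bar{X}$ remains a smooth projective geometrically irreducible curve over $\bar{F}$ not isomorphic to $\mathbb{P}^1_{\bar{F}}$, with $\ell$ invertible — is the right (and only) thing to check, and it holds by Remark \ref{rem:voodoo} and the standing assumptions.
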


\begin{remark}
\label{rem:Ekedahl}
For surfaces over a separably closed field, the situation is completely different. Ekedahl gave examples in \cite{Ekedahl} of smooth projective surfaces $S$ over $\mathbb{C}$ and characters $\chi_1,\chi_2,\chi_3\in \HH^1(S,\mathbb{Z}/\ell)$ such that $\langle \chi_1,\chi_2,\chi_3\rangle $ does not contain $0$.
\end{remark}

In the situation of Proposition \ref{prop:MasseyOverClosure}, the question arises when the Massey product $\langle \chi_1,\ldots,\chi_t\rangle$ contains zero. This question sometimes reduces to the question of when the $t$-fold Massey product of $t$ characters in $\mathrm{Hom}(G_F,\mathbb{Z}/\ell)$ vanishes. The following definition is useful in this context. 

\begin{definition}
\label{def:Masseyvanish}
We say the $t$-fold Massey vanishing property holds for $F$ over $\mathbb{Z}/\ell$ if 
for all $\alpha_1,\ldots,\alpha_t\in\HH^1(F,\mathbb{Z}/\ell)=\mathrm{Hom}(G_F,\mathbb{Z}/\ell)$, the $t$-fold Massey product $\langle \alpha_1,\ldots,\alpha_t \rangle$ contains zero provided it is non-empty. 
\end{definition}

\begin{remark}
\label{rem:duh!}
Here are some known instances when the $t$-fold Massey vanishing property holds for $F$ over $\mathbb{Z}/\ell$:
\begin{itemize}
\item If $t=3$ and $F$ is arbitrary, this holds by \cite{MinacTan2016}.
\item If $t\ge 4$ and $F$ is a number field, this holds by \cite{HarWit2019}. 
\item  If $F$ is a finite field this holds for all $t \ge 2$ since $\HH^2(F,\mathbb{Z}/\ell) = 0$.
\end{itemize}
\end{remark}

\begin{proposition}
\label{prop:MasseyRestrict}
Let $t\ge 3$. Suppose the $\ell$-torsion $\mathrm{Pic}(\bar{X})[\ell]$ is defined over $F$ and that the $t$-fold Massey vanishing property holds for $F$ over $\mathbb{Z}/\ell$. Let $\chi_1,\ldots,\chi_t\in \HH^1(X,\mathbb{Z}/\ell)$, and suppose the $t$-fold Massey product $\langle\chi_1,\ldots,\chi_t\rangle$ is not empty.  If $\bar{\chi}_{i_0}=0$ for some $1\le i_0\le t$, then $\langle \chi_1,\ldots,\chi_t\rangle$ contains zero.
\end{proposition}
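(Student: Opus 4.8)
The plan is to argue through the lifting reformulation of non-vanishing recorded after (\ref{eq:rhobar}): a defining system with diagonal $\chi_1,\ldots,\chi_t$ is the same as a homomorphism $\vartheta\colon \pi_1(X)\to U_{t+1}(\mathbb{Z}/\ell)/Z(U_{t+1}(\mathbb{Z}/\ell))$, and $\langle\chi_1,\ldots,\chi_t\rangle$ contains $0$ exactly when some such $\vartheta$ lifts to a homomorphism $\rho\colon\pi_1(X)\to U_{t+1}(\mathbb{Z}/\ell)$. Since non-emptiness gives us one $\vartheta$ to start from, the whole task is to modify the off-diagonal part of the defining system until a lift exists. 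I would first observe that a character equal to $0$ restricts to $0$ automatically and, sitting on the diagonal, lets one take the adjacent off-diagonal entries of the defining system to vanish; this both simplifies the obstruction cocycle at that position and means the propagation argument below need not cross it. So the essential case is $\chi_{i_0}\neq 0$.

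Next I would use Lemma \ref{lem:cupp} to spread the vanishing of restrictions. Non-emptiness forces all adjacent cup products $\chi_i\cup\chi_{i+1}$ to vanish in $\HH^2(X,\mathbb{Z}/\ell)$ (this is condition (ii) of Definition \ref{def:Massey} with $j=i+1$). Starting from $\bar\chi_{i_0}=0$ and $\chi_{i_0}\neq 0$, Lemma \ref{lem:cupp} applies (its hypothesis that $\mathrm{Pic}(\bar X)[\ell]$ is defined over $F$ is exactly one of our assumptions) and yields $\bar\chi_{i_0-1}=0$ and $\bar\chi_{i_0+1}=0$; iterating, the restriction of every character not separated from position $i_0$ by a zero character also vanishes. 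Combining this with the treatment of zero characters above reduces the problem to the situation in which $\bar\chi_i=0$ for all $i$, equivalently every $\chi_i$ is inflated from a character $\alpha_i\in\HH^1(F,\mathbb{Z}/\ell)=\mathrm{Hom}(G_F,\mathbb{Z}/\ell)$ along $\pi_1(X)\twoheadrightarrow G_F$.

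With all $\bar\chi_i=0$ I would analyze the obstruction class $o(\vartheta)\in\HH^2(X,\mathbb{Z}/\ell)$ to lifting $\vartheta$ by means of the Hochschild--Serre sequence for $1\to\pi_1(\bar X)\to\pi_1(X)\to G_F\to 1$. The restriction of $o(\vartheta)$ to $\pi_1(\bar X)$ is the obstruction over $\bar X$, which by Proposition \ref{prop:MasseyOverClosure} can be trivialized after adjusting the defining system; thus $o(\vartheta)$ becomes inflated from a class $\gamma\in\HH^2(G_F,\mathbb{Z}/\ell)$. The role of the "defined over $F$" hypothesis is that $G_F$ then acts trivially on $\HH^1(\bar X,\mathbb{Z}/\ell)$ (as noted in the proof of Lemma \ref{lem:cupp}), which is what allows the adjusted defining system to descend, so that $\gamma$ is realized as a member of a Galois Massey product assembled from the $\alpha_i$. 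The $t$-fold Massey vanishing property for $F$ then forces that Galois product to contain $0$; taking $\gamma=0$ and inflating the corresponding lift of the Galois defining system produces $\rho\colon\pi_1(X)\to U_{t+1}(\mathbb{Z}/\ell)$, whence $0\in\langle\chi_1,\ldots,\chi_t\rangle$.

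The main obstacle is the compatibility between the two cohomological levels. Proposition \ref{prop:MasseyOverClosure} only trivializes the restricted obstruction by adding to an off-diagonal entry an element of $\HH^1(\bar X,\mathbb{Z}/\ell)$ that need not be the restriction of a class in $\HH^1(X,\mathbb{Z}/\ell)$; I must check that, under the present hypotheses, the trivialization can be effected by curve-level adjustments, equivalently that the residual obstruction genuinely lies in the image of inflation from $\HH^2(G_F,\mathbb{Z}/\ell)$. Controlling this transgression, and confirming that the descended class $\gamma$ really is an element of a (hence non-empty) Galois Massey product to which the vanishing hypothesis applies, is the delicate step; the trivial $G_F$-action on $\HH^1(\bar X,\mathbb{Z}/\ell)$ supplied by the "defined over $F$" assumption is the lever I expect to use to push it through.
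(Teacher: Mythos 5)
The skeleton you chose --- propagate vanishing of restrictions with Lemma \ref{lem:cupp} and anticommutativity of the cup product, then land in $\mathrm{Hom}(G_F,\mathbb{Z}/\ell)$ and invoke the Massey vanishing property of Definition \ref{def:Masseyvanish} --- is the paper's. But your execution has two genuine gaps. The first is the handling of zero characters. Lemma \ref{lem:cupp} only applies when the character whose restriction vanishes is itself nonzero, so your propagation halts at any index $j$ with $\chi_j=0$, and the characters on the far side of such an index may perfectly well have nonzero restriction to $\bar{X}$; hence your claimed reduction to ``$\bar{\chi}_i=0$ for all $i$'' is false as stated. The correct move, and the paper's, is to dispose of that case outright: if some $\chi_{j_0}=0$, then the non-empty product $\langle\chi_1,\ldots,\chi_t\rangle$ already contains $0$ --- apply (\ref{eq:Masseyscalar}) with $c_{j_0}=0$ and all other $c_i=1$ --- and only in the remaining case, where every $\chi_i\neq 0$, does Lemma \ref{lem:cupp} force $\bar{\chi}_i=0$ for every $i$. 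Your remark that the propagation ``need not cross'' a zero entry is not an argument for either of these statements.

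The second, more serious, gap is that your main case is an outline rather than a proof. The paper finishes in one step: since all $\chi_i$ factor through $G_F$, the Massey vanishing property gives $0$ in the Galois product of the corresponding characters $\alpha_i\in\mathrm{Hom}(G_F,\mathbb{Z}/\ell)$, and inflating a Galois defining system whose $2$-cocycle is a coboundary along $\pi_1(X)\twoheadrightarrow G_F$ exhibits $0\in\langle\chi_1,\ldots,\chi_t\rangle$. You instead build a Hochschild--Serre obstruction argument whose pivotal assertions --- that after adjustment the obstruction class is inflated from $\HH^2(G_F,\mathbb{Z}/\ell)$, and that the descended class $\gamma$ lies in a \emph{non-empty} Galois Massey product --- are exactly the statements you defer as ``the delicate step'' you ``expect'' to push through; deferring them leaves the case unproved. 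One of your intermediate claims is moreover incorrect in general: a class in $\HH^2(X,\mathbb{Z}/\ell)$ whose restriction to $\HH^2(\bar{X},\mathbb{Z}/\ell)$ vanishes need not be inflated from $G_F$, the discrepancy being measured by $\HH^1(G_F,\HH^1(\bar{X},\mathbb{Z}/\ell))$ in the Hochschild--Serre filtration. To your credit, the issue you isolate is real: Definition \ref{def:Masseyvanish} applies only to non-empty Galois products, and non-emptiness over $\pi_1(X)$ does not formally descend to $G_F$ (it does when $\pi_1(X)\to G_F$ splits, e.g.\ when $X$ has a rational point, by composing a defining system with a section); the paper's own proof treats this passage as immediate. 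But flagging the subtlety is not resolving it, and the extra machinery you introduce does not close the gap.
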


\begin{proof}
Suppose $\bar{\chi}_{i_0}=0$ for some $i_0$. Since $\chi_i\cup \chi_{i+1}=0$ for $i=1,\ldots,t-1$ and since the cup product is anti-commutative, it follows from Lemma \ref{lem:cupp} that either $\chi_{j_0}=0$ for some $j_0$, or all $\bar{\chi}_1=\bar{\chi}_2=\cdots=\bar{\chi}_t=0$. If $\chi_{j_0}=0$ then it is obvious that $\langle \chi_1,\ldots,\chi_t\rangle$ contains zero. Otherwise all of $\chi_1,\ldots,\chi_t$ factor through $\HH^1(F,\mathbb{Z}/\ell)=\mathrm{Hom}(G_F,\mathbb{Z}/\ell)$. Since we assume that the $t$-fold Massey vanishing property holds for $F$ over $\mathbb{Z}/\ell$, it follows that $\langle\chi_1,\ldots,\chi_t\rangle$ contains $0$.
\end{proof}

%%%%%%%%%%%%%%%%%%%%%%%%%%%%%%%%%%%%%%%%%%%%%

\section{Necessary conditions for the non-vanishing of Massey products}
\label{s:necessary}

We make the same assumptions as in the previous section. We obtain the following necessary conditions for the $t$-fold Massey product to not contain zero.

\begin{proposition}
\label{prop:nice!}
Suppose that $\ell,t\ge 3$, and that the $\ell$-torsion $\mathrm{Pic}(\bar{X})[\ell]$ is defined over $F$. Moreover, assume that the $t$-fold Massey vanishing property holds for $F$ over $\mathbb{Z}/\ell$. Let $\chi_1,\ldots,\chi_t\in \HH^1(X,\mathbb{Z}/\ell)$ be such that the $t$-fold Massey product $\langle \chi_1,\ldots,\chi_t\rangle$ is not empty and does not contain zero. Then 
the following is true.
\begin{itemize}
\item[(a)] None of the restrictions $\bar{\chi}_1,\ldots,\bar{\chi}_t$ to $\HH^1(\bar{X},\mathbb{Z}/\ell)$ are zero.
\item[(b)] If there exist $a_1,\ldots,a_{t-1} \in (\mathbb{Z}/\ell)^\times$ with $\chi_i=a_i\chi_t$ for all $1\le i \le t-1$
then $t\ge \ell$.
\item[(c)] If $X$ has genus $1$ then there are always $a_1,\ldots,a_{t-1}$ as in $(b)$.
\end{itemize}
\end{proposition}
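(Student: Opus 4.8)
The plan is to dispatch (a) immediately, reduce (b) to a single character and build an explicit unipotent lift, and spend the real effort on (c), where proportionality must be upgraded from $\bar X$ to $X$.

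For part (a), I observe that the statement is exactly the contrapositive of Proposition~\ref{prop:MasseyRestrict}. All of that proposition's hypotheses are in force here: $t\ge 3$, the torsion $\mathrm{Pic}(\bar X)[\ell]$ is defined over $F$, the $t$-fold Massey vanishing property holds for $F$, and $\langle\chi_1,\ldots,\chi_t\rangle$ is non-empty. Hence if some restriction $\bar\chi_{i_0}$ were zero, that proposition would force $\langle\chi_1,\ldots,\chi_t\rangle$ to contain zero, contradicting the hypothesis that it does not. So none of the $\bar\chi_i$ vanish.

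For part (b), I set $\chi:=\chi_t$, which is non-zero by (a), and note the hypothesis says $\chi_i=a_i\chi$ with $a_i\in(\mathbb{Z}/\ell)^\times$ (taking $a_t=1$). By \eqref{eq:ohyeah!}, the condition $0\notin\langle\chi_1,\ldots,\chi_t\rangle$ is equivalent to $0\notin\langle\chi,\ldots,\chi\rangle$. I would then argue contrapositively that $t<\ell$ is impossible. Let $N$ be the nilpotent $(t+1)\times(t+1)$ matrix with $1$'s on the first superdiagonal and $0$'s elsewhere. Since $t<\ell$, the factorials $k!$ are invertible mod $\ell$ for $0\le k\le t$, so $\exp(xN):=\sum_{k=0}^{t}\tfrac{x^k}{k!}N^k$ is a well-defined element of $U_{t+1}(\mathbb{Z}/\ell)$ for $x\in\mathbb{Z}/\ell$, and $x\mapsto\exp(xN)$ is a group homomorphism $\mathbb{Z}/\ell\to U_{t+1}(\mathbb{Z}/\ell)$. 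Composing with $\chi$ gives $\rho=\exp(\chi(-)N)\colon\pi_1(X)\to U_{t+1}(\mathbb{Z}/\ell)$, all of whose superdiagonal entries equal $\chi$. This $\rho$ is an honest lift to $U_{t+1}(\mathbb{Z}/\ell)$ of a defining system with $\kappa_{i,i}=\chi$, so by the criterion following \eqref{eq:rhobar} we get $0\in\langle\chi,\ldots,\chi\rangle$, a contradiction. Hence $t\ge\ell$.

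For part (c), I first extract the consecutive cup-product relations. Non-emptiness supplies a defining system, and condition (ii) of Definition~\ref{def:Massey} with $j=i+1$ exhibits $\chi_i\cup\chi_{i+1}$ as the coboundary $\delta\kappa_{i,i+1}$, so $\chi_i\cup\chi_{i+1}=0$ in $\HH^2(X,\mathbb{Z}/\ell)$ for all $1\le i\le t-1$. Restricting (the restriction map respects cup products) gives $\bar\chi_i\cup\bar\chi_{i+1}=0$ in $\HH^2(\bar X,\mathbb{Z}/\ell)$. Since $X$ has genus $1$, $\HH^1(\bar X,\mathbb{Z}/\ell)$ is two-dimensional and the cup pairing into $\HH^2(\bar X,\mathbb{Z}/\ell)\cong\mathbb{Z}/\ell$ is the non-degenerate alternating Weil pairing; on a two-dimensional space such a form vanishes exactly on linearly dependent pairs. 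As each $\bar\chi_i\ne 0$ by (a), consecutive restrictions are proportional, so I may choose $a\in(\mathbb{Z}/\ell)^\times$ with $\bar\chi_i=a\,\bar\chi_{i+1}$.

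The main obstacle is upgrading this to proportionality over $X$ itself, and this is where Lemma~\ref{lem:cupp} does the work. Set $\phi:=\chi_i-a\chi_{i+1}$, so that $\bar\phi=0$. Because $\ell$ is odd, anti-commutativity of the cup product gives $\chi_{i+1}\cup\chi_{i+1}=0$, whence $\phi\cup\chi_{i+1}=\chi_i\cup\chi_{i+1}-a\,(\chi_{i+1}\cup\chi_{i+1})=0$. If $\phi\ne 0$, then Lemma~\ref{lem:cupp} applied with its ``$\chi$'' equal to $\phi$ and its ``$\psi$'' equal to $\chi_{i+1}$ (its hypotheses hold since $\mathrm{Pic}(\bar X)[\ell]$ is defined over $F$) forces $\bar\chi_{i+1}=0$, contradicting (a). Therefore $\phi=0$, i.e. $\chi_i=a\chi_{i+1}$. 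Carrying this out for each $i$ and composing the resulting unit scalars expresses every $\chi_i$ as a unit multiple of $\chi_t$, which produces the $a_1,\ldots,a_{t-1}$ demanded in (b). I expect the only delicate points to be the invertibility bookkeeping in the exponential construction for (b) and the careful invocation of ``defined over $F$'' together with ``$\ell$ odd'' in the final step of (c).
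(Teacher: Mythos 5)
Your proof is correct, and the overall skeleton matches the paper's; the interesting divergence is in part (b). For (a), you simply quote Proposition \ref{prop:MasseyRestrict} as a contrapositive, which is legitimate (the paper re-runs that proposition's argument inline via Lemma \ref{lem:cupp} and the Massey vanishing property, so the content is identical). For (c), your route is the paper's route with a cosmetic reorganization: both use part (a), the non-degeneracy of the alternating cup pairing on the two-dimensional space $\HH^1(\bar X,\mathbb{Z}/\ell)$ to get proportionality of restrictions, and then Lemma \ref{lem:cupp} (with $\ell$ odd giving $\chi\cup\chi=0$) to kill the discrepancy; you do this consecutively for each pair $(\chi_i,\chi_{i+1})$ and compose scalars, while the paper writes $\chi_i=a_i\chi_t+\psi_i$ with $\psi_i$ inflated from $\HH^1(F,\mathbb{Z}/\ell)$ and inducts downward from $i=t-1$. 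The genuine difference is (b): the paper deduces $0\in\langle\chi,\dots,\chi\rangle$ for $t<\ell$ from Kraines' theorem, cited as \eqref{eq:Bockstein}, that the restricted product $\langle\chi\rangle^\ell$ is defined and equals $\{-\beta(\chi)\}$, so that the shorter restricted products $\langle\chi\rangle^t$ are defined and vanish; you instead build an explicit lift $\rho=\exp(\chi(\cdot)N)\colon\pi_1(X)\to U_{t+1}(\mathbb{Z}/\ell)$, using that the truncated exponential of a regular nilpotent $N$ is a homomorphism $\mathbb{Z}/\ell\to U_{t+1}(\mathbb{Z}/\ell)$ precisely because all $k!$ with $k\le t<\ell$ are invertible mod $\ell$, and then invoke the lifting criterion following \eqref{eq:rhobar}. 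Your construction is sound (the superdiagonal entries of $\exp(xN)$ all equal $x$, and the binomial identity gives the homomorphism property), and it has the virtue of being self-contained and of making the numerical threshold $t\ge\ell$ transparent: a cyclic group of order $\ell$ embeds as a regular unipotent in $U_{t+1}(\mathbb{Z}/\ell)$ exactly when $t<\ell$. What the paper's appeal to Kraines buys in exchange is the link to the Bockstein and restricted Massey products, which the authors reuse later (e.g.\ in Remark \ref{rem:Kim} and in the classification for $\ell=3$, where the obstruction really is $-\beta(\chi)$ up to cup products).
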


\begin{proof}
If one of $\chi_1,\ldots,\chi_t$ is zero, then $\langle \chi_1,\ldots,\chi_t\rangle$ contains zero. Therefore, we have that none of $\chi_1,\ldots,\chi_t$ are zero.   Since $t\ge 3$, in order for $\langle \chi_1,\ldots,\chi_t\rangle$ to be not empty, we must have $\chi_i \cup \chi_{i+1} = 0$ for $1 \le i \le t - 1$.  If one of the $\bar{\chi}_i  $ is zero, then by Lemma \ref{lem:cupp} and the anti-commutativity of the cup product we conclude that  $\bar{\chi}_i=0$ for $1\le i\le t$. In other words, $\chi_1,\ldots,\chi_t\in \HH^1(F,\mathbb{Z}/\ell) = \mathrm{Hom}(G_F,\mathbb{Z}/\ell)$. Since we assume that the $t$-fold Massey vanishing property holds for $F$ over $\mathbb{Z}/\ell$, it follows that $\langle \chi_1,\ldots,\chi_t\rangle$ contains zero. This implies that none of $\bar{\chi}_1, \ldots,\bar{\chi}_t$ are zero, which is condition (a). 

Suppose now that there exist $a_1,\ldots,a_{t-1} \in (\mathbb{Z}/\ell)^\times$ as in (b). By (\ref{eq:ohyeah!}), it follows that $\langle \chi_1,\ldots,\chi_t\rangle$ does not contain zero if and only if the $t$-fold Massey product $\langle \chi_t,\ldots,\chi_t \rangle$ of $t$ copies of $\chi_t$ does not contain zero. By (\ref{eq:Bockstein}), $\langle \chi_t\rangle^\ell$ is non-empty, which implies that if $t < \ell$ then $\langle \chi_t\rangle^t$ is non-empty and contains zero. Therefore, the $t$-fold Massey product $\langle \chi_t,\ldots,\chi_t \rangle$ of $t$ copies of $\chi_t$ is non-empty and contains zero if $t<\ell$. Since we have assumed that
$\langle \chi_1,\ldots,\chi_t \rangle$ does not contain $0$, this implies $t \ge \ell$.

Finally, suppose that $X$ has genus 1, so that, by Remark \ref{rem:voodoo}, $\bar{X}$ is an elliptic curve and $\mathrm{Pic}(\bar{X})[\ell]$ has dimension $2$ over $\mathbb{Z}/\ell$. Since $\bar{\chi}_i\cup \bar{\chi}_{i+1}=0$, for $1\le i \le t-1$, and since $\bar{\chi}_1,\ldots,\bar{\chi}_t$ are non-zero by part (a), the non-degeneracy of the Weil pairing and Remark \ref{rem:voodoo} imply that there exist $a_1,\ldots,a_{t-1} \in (\mathbb{Z}/\ell)^\times$ with $\bar{\chi}_i=a_i\bar{\chi}_t$ for all $1\le i \le t-1$. Hence there exist $\psi_1,\ldots, \psi_{t-1}\in \HH^1(F,\mathbb{Z}/\ell) = \mathrm{Hom}(G_F,\mathbb{Z}/\ell)$ such that
$$\chi_i = a_i \chi_t + \psi_i \quad\mbox{for $1\le i\le t-1$.}$$
Since $\chi_{t-1}\cup \chi_t = 0$, and $\chi_t \cup \chi_t = 0$ because $\ell\ge 3$, this implies $\psi_{t-1}\cup\chi_t=0$. But since $\bar{\chi}_t\ne 0$ and $\bar{\psi}_{t-1}=0$, this is by Lemma \ref{lem:cupp} only possible if $\psi_{t-1}=0$. By induction on $t$ we obtain $\psi_i=0$ for all $1\le i\le t-1$. Therefore, there exist $a_1,\ldots,a_{t-1} \in (\mathbb{Z}/\ell)^\times$ with $\chi_i=a_i\chi_t$ for all $1\le i \le t-1$. 
\end{proof}

The next result is an immediate consequence of Proposition \ref{prop:nice!} and the Massey vanishing results in \cite{MinacTan2016} (see Remark \ref{rem:duh!}).

\begin{corollary}
\label{cor:nice!}
Suppose that $\ell\ge 3$, and that the $\ell$-torsion $\mathrm{Pic}(\bar{X})[\ell]$ is defined over $F$. Let $\chi\in \HH^1(X,\mathbb{Z}/\ell)$ be such that the triple Massey product $\langle \chi,\chi,\chi\rangle$ does not contain zero. Then $\ell=3$ and the restriction $\bar{\chi}$ to $\HH^1(\bar{X},\mathbb{Z}/\ell)$ is not zero.
\end{corollary}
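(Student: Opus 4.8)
The plan is to apply Proposition \ref{prop:nice!} directly, taking $t = 3$ and $\chi_1 = \chi_2 = \chi_3 = \chi$. Before doing so I would verify that the hypotheses of that proposition are in place. The condition that $\mathrm{Pic}(\bar{X})[\ell]$ be defined over $F$ is part of the hypothesis of the corollary, and the requirement $\ell, t \ge 3$ holds since $t = 3$ and $\ell \ge 3$. The $3$-fold Massey vanishing property for $F$ over $\mathbb{Z}/\ell$ holds for an arbitrary field $F$ by the result of Min{\'a}\v{c} and T{\^a}n \cite{MinacTan2016}, as recorded in Remark \ref{rem:duh!}.

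The one point that requires a small argument is non-emptiness, since Proposition \ref{prop:nice!} assumes that $\langle \chi_1,\ldots,\chi_t\rangle$ is not empty, whereas the hypothesis of the corollary only asserts that $\langle \chi,\chi,\chi\rangle$ does not contain zero. Here I would use that $\ell$ is odd: by anti-commutativity of the cup product, $\chi \cup \chi = -\chi\cup\chi$, so $2(\chi\cup\chi) = 0$, and since $\ell$ is odd this forces $\chi\cup\chi = 0$. By Remark \ref{rem:tripleMassey}, the vanishing $\chi\cup\chi = 0$ is exactly the condition for $\langle \chi,\chi,\chi\rangle$ to be non-empty. Hence the triple Massey product is non-empty, and by assumption it does not contain zero, so Proposition \ref{prop:nice!} applies with the three characters all equal to $\chi$.

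With the hypotheses verified, the two conclusions fall out immediately. Part (a) of Proposition \ref{prop:nice!} says that none of the restrictions of the $\chi_i$ to $\HH^1(\bar{X},\mathbb{Z}/\ell)$ vanish; since all three equal $\bar{\chi}$, this gives $\bar{\chi} \ne 0$. For the constraint on $\ell$, I would invoke part (b) with $a_1 = a_2 = 1 \in (\mathbb{Z}/\ell)^\times$, so that $\chi_i = a_i\chi_3$ for $i = 1,2$; the conclusion of (b) is then $t \ge \ell$, i.e. $3 \ge \ell$. Combined with the standing assumption $\ell \ge 3$, this yields $\ell = 3$.

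Since the whole statement reduces to a direct application of Proposition \ref{prop:nice!}, there is no substantial obstacle; the only step needing care is the non-emptiness check above, which is precisely where the oddness of $\ell$ is used.
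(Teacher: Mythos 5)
Your proof is correct and follows exactly the route the paper intends: the paper states Corollary \ref{cor:nice!} as an immediate consequence of Proposition \ref{prop:nice!} (parts (a) and (b) with $t=3$, $\chi_1=\chi_2=\chi_3=\chi$) together with the Min\'a\v{c}--T\^an vanishing result recorded in Remark \ref{rem:duh!}. Your explicit verification of non-emptiness via $\chi\cup\chi=0$ for odd $\ell$ is a point the paper leaves implicit, and it is exactly the right observation to rule out the vacuous case.
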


\begin{remark}
\label{rem:Kim}
We obtain the following connection to an invariant suggested by M. Kim in \cite{Kim2020}.
Suppose $\ell=3$, and that the $3$-torsion $\mathrm{Pic}(\bar{X})[3]$ is defined over $F$. The non-degeneracy and Galois equivariance of the Weil pairing then imply that $F$ contains $\mu_3(\bar{F})$.  Let $\chi:\pi_1(X)\to\mathbb{Z}/3$ be a character whose restriction $\bar{\chi}$ to $\HH^1(\bar{X},\mathbb{Z}/3)$ is not zero. By (\ref{eq:Bockstein}), the restricted triple Massey product $\langle \chi\rangle^3$ is a singleton
$$\langle \chi\rangle^3= -\beta(\chi)\quad\in\quad \HH^2(X,\mathbb{Z}/3)$$ 
where $\beta$ is the Bockstein operator associated to the exact sequence
$$0\to\mathbb{Z}/3 \to \mathbb{Z}/9\to \mathbb{Z}/3\to 0.$$ 
Since $\langle \chi\rangle^3\subseteq\langle \chi,\chi,\chi\rangle$ and since the cup product is anti-commutative, it follows from Remark \ref{rem:tripleMassey} that
$$\langle \chi,\chi,\chi\rangle = -\beta(\chi) + \chi\cup\HH^1(X,\mathbb{Z}/3).$$
Since $\chi\cup\chi=0$, we obtain that 
\begin{equation}
\label{eq:Kimeq}
\chi\cup\langle \chi,\chi,\chi\rangle = \chi\cup\langle \chi\rangle^3=-\chi\cup\beta(\chi) \in \HH^3(X,\mathbb{Z}/3).\end{equation}
Suppose now that $F$ is a finite field. Then $\HH^3(X,\mu_3)$ is canonically isomorphic to $\mathbb{Z}/3$, so since $F$ contains $\mu_3(\bar{F})$ we get an isomorphism
$\HH^3(X,\mathbb{Z}/3) = \mu_3(\bar{F})^{\otimes -1} = \mathrm{Hom}(\mu_3(\bar{F}),\mathbb{Z}/3)$.  In this case, (\ref{eq:Kimeq}) is  the negative of the invariant Kim defines at the end of  \cite[Section~1]{Kim2020}. 
\end{remark}

\begin{remark}
\label{rem:Kim2}  Continuing with the hypotheses of Remark \ref{rem:Kim}, suppose in addition that 
$X$ has genus $1$ and that $F$ is a finite field.  We claim that we have an isomorphism  of one-dimensional $\mathbb{Z}/3$ vector spaces 
\begin{equation}
\label{eq:nicefact}
\frac{\HH^2(X,\mathbb{Z}/3)}{\chi\cup\HH^1(X,\mathbb{Z}/3)} \to \HH^3(X,\mathbb{Z}/3) \cong \mathbb{Z}/3
\end{equation}
defined by $\beta \mapsto \chi\cup\beta$ for $\beta\in \HH^2(X,\mathbb{Z}/3)$.  Since $F$ is a finite field containing $\mu_3(\bar{F})$,  the cup product 
$$\HH^1(X,\mathbb{Z}/3) \times \HH^2(X,\mathbb{Z}/3) \to \HH^3(X,\mathbb{Z}/3)$$
is non-degenerate, see \cite[Cor.  V.2.3]{Milne}.  Hence (\ref{eq:nicefact}) is well-defined and surjective. 
Since $X$ has genus $1$ and $\bar{\chi}$ is not zero, the argument proving the last statement of Proposition \ref{prop:nice!} shows that the only elements $\xi \in \HH^1(X,\mathbb{Z}/3)$ such that $\chi \cup \xi = 0$ are those $\xi$ that are multiples of $\chi$.  So $\chi\cup\HH^1(X,\mathbb{Z}/3)$ has dimension 
$$\mathrm{dim}_{\mathbb{Z}/3} \HH^1(X,\mathbb{Z}/3) - 1 = \mathrm{dim}_{\mathbb{Z}/3} \HH^2(X,\mathbb{Z}/3) - 1.$$
This proves both sides of (\ref{eq:nicefact}) have dimension $1$, so (\ref{eq:nicefact}) is an isomorphism because it is surjective. The conclusion is that under the above assumptions, the non-triviality of $\langle \chi, \chi, \chi \rangle$ in the group on the left side of (\ref{eq:nicefact}) is equivalent to the non-vanishing of Kim's invariant. We will analyze the $\chi$ for which this holds in the next sections.   
\end{remark}

%%%%%%%%%%%%%%%%%%%%%%%%%%%%%%%%%%%%%%%%%%%%%

\section{Triple Massey products and elliptic curves}
\label{s:triple}

In this section, we make the same assumptions as in the previous section. But we focus on the case when $t=3$ and $X=E$ is an elliptic curve over a field $F$ whose characteristic is not $3$.
We fix $\chi_1,\chi_2,\chi_3\in \HH^1(E,\mathbb{Z}/\ell)$ and we assume that the triple Massey product $\langle \chi_1,\chi_2,\chi_3\rangle$ is not empty. By Remark \ref{rem:tripleMassey}, this is equivalent to $\chi_1\cup\chi_2 = \chi_2\cup\chi_3 = 0$. We define $\bar{E}=E\otimes_F\bar{F}$ and $G_F=\mathrm{Gal}(\bar{F}/F)$. Finally, we will slightly abuse notation and denote by $\bar{E}[\ell]$ the set  $E(\bar{F})[\ell]$, endowed with its canonical structure of $G_F$-module.

The next result is an immediate consequence of Proposition \ref{prop:nice!} and the Massey vanishing results in \cite{MinacTan2016} (see Remark \ref{rem:duh!}).

\begin{lemma}
\label{lem:nice!}
Suppose that $E$ is an elliptic curve over a field $F$ of characteristic different from $3$. Assume that $\ell\ge 3$, and that the $\ell$-torsion $\mathrm{Pic}(\bar{E})[\ell]=\bar{E}[\ell]$ is defined over $F$. Let $\chi_1,\chi_2,\chi_3\in \HH^1(E,\mathbb{Z}/\ell)$ be such that the triple Massey product $\langle \chi_1,\chi_2,\chi_3\rangle$ is not empty and does not contain zero. Then $\ell=3$, none of the restrictions $\bar{\chi}_1,\bar{\chi}_2,\bar{\chi_3}$ to $\HH^1(\bar{E},\mathbb{Z}/\ell)$ are zero, and there exist $a,b \in (\mathbb{Z}/\ell)^\times$ with $\chi_2=a\chi_1=b\chi_3$.
\end{lemma}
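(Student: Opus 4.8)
The plan is to deduce the lemma directly from Proposition \ref{prop:nice!}, applied with $t=3$ and $X=E$, since the statement is essentially its specialization to triple products on a genus-one curve. First I would verify the hypotheses of that proposition. We are given $\ell\ge 3$ and take $t=3\ge 3$; the $\ell$-torsion $\mathrm{Pic}(\bar E)[\ell]=\bar E[\ell]$ is assumed defined over $F$; and the one remaining hypothesis, the $3$-fold Massey vanishing property for $F$ over $\mathbb{Z}/\ell$, holds for an arbitrary field $F$ by the theorem of Min{\'a}\v{c} and T{\^a}n \cite{MinacTan2016} recorded in Remark \ref{rem:duh!}. Since the triple Massey product $\langle \chi_1,\chi_2,\chi_3\rangle$ is by assumption non-empty and does not contain zero, all hypotheses of Proposition \ref{prop:nice!} are met for the given $\chi_1,\chi_2,\chi_3$.

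Next I would simply read off the three conclusions. Part (a) immediately gives that none of $\bar\chi_1,\bar\chi_2,\bar\chi_3$ are zero, which is the second assertion of the lemma. Since $E$ has genus $1$, part (c) supplies scalars $a_1,a_2\in(\mathbb{Z}/\ell)^\times$ with $\chi_1=a_1\chi_3$ and $\chi_2=a_2\chi_3$. Feeding the existence of these scalars back into part (b) forces $3=t\ge\ell$; combined with the standing hypothesis $\ell\ge 3$ this yields $\ell=3$, the first assertion.

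Finally I would repackage the scalar relations into the symmetric form requested in the statement. From $\chi_1=a_1\chi_3$ we get $\chi_3=a_1^{-1}\chi_1$, whence $\chi_2=a_2\chi_3=a_2a_1^{-1}\chi_1$. Setting $a=a_2a_1^{-1}$ and $b=a_2$, both in $(\mathbb{Z}/\ell)^\times$, gives $\chi_2=a\chi_1=b\chi_3$, which is the third assertion.

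I do not anticipate any genuine obstacle here: all the substantive content, namely controlling the restrictions to $\bar E$ via Lemma \ref{lem:cupp} together with the non-degeneracy of the Weil pairing, and invoking the Bockstein description \eqref{eq:Bockstein} to force $t\ge\ell$, is already carried out inside Proposition \ref{prop:nice!}. The only point requiring a little care is the bookkeeping of indices and inverses when translating parts (b) and (c) into the symmetric relation $\chi_2=a\chi_1=b\chi_3$, and checking that the Massey vanishing input indeed applies to an \emph{arbitrary} base field $F$ so that no restriction on $F$ beyond $\Char F\ne 3$ is silently imposed.
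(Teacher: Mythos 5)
Your proposal is correct and takes exactly the paper's route: the paper states Lemma \ref{lem:nice!} as an immediate consequence of Proposition \ref{prop:nice!} (applied with $t=3$, using part (a) for the non-vanishing of the restrictions, part (c) for the genus-one scalar relations, and part (b) to force $3=t\ge\ell$, hence $\ell=3$) combined with the Min\'a\v{c}--T\^an vanishing theorem \cite{MinacTan2016} recorded in Remark \ref{rem:duh!}. Your repackaging of the scalars into $\chi_2=a\chi_1=b\chi_3$ is also correct.
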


In particular, it follows from Lemma \ref{lem:nice!} that if $\ell\ge 3$ and the $\ell$-torsion $\bar{E}[\ell]$ is defined over $F$, then $\langle \chi_1,\chi_2,\chi_3\rangle$ contains zero unless possibly when $\ell=3$ and $\chi_1,\chi_2,\chi_3$ all generate the same non-trivial subgroup of $\HH^1(E,\mathbb{Z}/3)$. Using (\ref{eq:ohyeah!}), we see that the only question that remains to be answered is for which characters $\chi: \pi_1(E) \to \mathbb{Z}/3$ of order $3$, the Massey product $\langle \chi,\chi,\chi \rangle$ does not contain zero.

For $n \ge 1$, let $\bar{E}[3^n]$ be the $3^n$-torsion of $\bar{E}$. We assume that $\mathrm{Pic}(\bar{E})[3]=\bar{E}[3]$ is defined over $F$.  Since the Weil pairing is non-degenerate and Galois equivariant, it follows, using Remark \ref{rem:voodoo}, that $F$ contains a primitive cubic root $\zeta_3$ of unity. Our goal is to determine all characters $\chi:\pi_1(E)\to\mathbb{Z}/3$ such that the restriction $\bar{\chi}$ of $\chi$ to $\pi_1(\bar{E})$ is non-zero and the triple Massey product  $\langle \chi,\chi,\chi\rangle$ does not contain zero.

Let $H$ be the subgroup of $U_4(\mathbb{Z}/3)$ defined in Remark \ref{rem:U4!} consisting of all matrices of the form $N=N(a,u,v,w)$ as in (\ref{eq:matrixnotation2}). There is a character $\psi:H \to \mathbb{Z}/3$ sending each such matrix $N$ to $a$.  Similarly to the discussion following (\ref{eq:rhobar}), we see that $\langle\chi,\chi,\chi\rangle$ does not contain zero if and only if there is no continuous group homomorphism $\rho:\pi_1(E) \to H$ with $\chi = \psi \circ \rho$.

The pro-$3$ completion of $\pi_1(\bar{E})$ is isomorphic to the $3$-adic Tate module $T_3(\bar{E}) \cong \mathbb{Z}_3^2$.  Since $\bar{E}[3]=T_3(\bar{E})/3T_3(\bar{E})$ is defined over $F$, each $\sigma\in G_F$ acts on $T_3(\bar{E})$ as the identity modulo $3T_3(\bar{E})$. Let $J^{(3)}$ denote the pro-$3$ completion of a profinite group $J$. 

\begin{lemma}
\label{lem:exact3}
Let $F$ be a field whose characteristic is not $3$, and let $E$ be an elliptic curve over $F$ such that the $3$-torsion $\bar{E}[3]$ is defined over $F$. There is an exact sequence
\begin{equation}
\label{eq:up}
0 \to T_3(\bar{E}) \to \pi_1(E)^{(3)} \to G_F^{(3)} \to 1.
\end{equation}
\end{lemma}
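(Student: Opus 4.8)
### Proof Proposal

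The plan is to exhibit the sequence \eqref{eq:up} as the pro-$3$ completion of the short exact sequence of étale fundamental groups
\[
1 \to \pi_1(\bar{E}) \to \pi_1(E) \to G_F \to 1,
\]
which is the homotopy exact sequence for the fibration $\bar{E} \to E \to \Spec(F)$ (valid because $\bar{E}$ is geometrically connected and $\bar{E}(\bar F)$ is nonempty, so there is a section giving a splitting of the outer Galois action issues at the level of base points). First I would invoke this standard exact sequence, with $\pi_1(\bar{E})$ the geometric fundamental group and the quotient $G_F = \Gal(\bar F/F)$.

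The main point is to understand what happens upon pro-$3$ completion, and here the hypothesis that $\bar{E}[3]$ is defined over $F$ is essential. The functor $J \mapsto J^{(3)}$ is right-exact, so applying it yields an exact sequence
\[
\pi_1(\bar{E})^{(3)} \to \pi_1(E)^{(3)} \to G_F^{(3)} \to 1,
\]
and I would identify $\pi_1(\bar{E})^{(3)}$ with the pro-$3$ Tate module $T_3(\bar{E}) \cong \mathbb{Z}_3^2$ as already recalled in the excerpt immediately preceding the lemma. The substantive work is promoting left-exactness: I must show the map $T_3(\bar{E}) \to \pi_1(E)^{(3)}$ remains injective. The cleanest route is to show the pro-$3$ completion functor preserves exactness on the left in this particular situation, which reduces to a statement about the conjugation (outer) action of $G_F$ on $\pi_1(\bar{E})^{(3)} = T_3(\bar{E})$: because $\bar{E}[3]$ is defined over $F$, every $\sigma \in G_F$ acts trivially modulo $3T_3(\bar{E})$, i.e. the action factors through a pro-$3$ group of automorphisms of $T_3(\bar{E})$. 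This ``unipotence modulo $3$'' is exactly what makes the extension compatible with pro-$3$ completion.

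The hard part will be the injectivity of $T_3(\bar E) \to \pi_1(E)^{(3)}$, and I expect to argue it as follows. Let $N$ be the kernel of $\pi_1(E) \to G_F$, so $N = \pi_1(\bar E)$. The image of $N$ in $\pi_1(E)^{(3)}$ is the pro-$3$ completion of $N$ relative to the filtration induced from $\pi_1(E)$; since $G_F$ acts on $N^{(3)} = T_3(\bar E)$ through a pro-$3$ group (by the mod-$3$ triviality hypothesis), the extension $1 \to N^{(3)} \to ? \to G_F^{(3)} \to 1$ obtained by pushing out along $N \to N^{(3)}$ is itself a pro-$3$-by-pro-$3$ (hence the relevant completion is well-behaved), and one checks that $\pi_1(E)^{(3)}$ is exactly this pushout. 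Concretely, I would verify that the natural map from the pushout extension to $\pi_1(E)^{(3)}$ is an isomorphism by a universal-property argument: any continuous homomorphism from $\pi_1(E)$ to a pro-$3$ group kills an open subgroup of $N$ of the right type, and since the $G_F$-action on $T_3(\bar E)$ is pro-$3$, no collapsing of the $T_3(\bar E)$ factor is forced. This yields both the surjectivity onto $G_F^{(3)}$ (already clear by right-exactness) and the injectivity of $T_3(\bar E)$, completing the proof of \eqref{eq:up}.
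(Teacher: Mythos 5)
Your skeleton is right: invoke the homotopy exact sequence, observe that right-exactness of pro-$3$ completion is formal, isolate injectivity of $T_3(\bar{E}) \to \pi_1(E)^{(3)}$ as the real content, and feed in the hypothesis through the fact that $G_F$ acts on $T_3(\bar{E})$ by matrices congruent to the identity mod $3$, hence through a pro-$3$ group. This matches the paper's setup. But the step you wave at (``one checks that $\pi_1(E)^{(3)}$ is exactly this pushout \ldots no collapsing of the $T_3(\bar{E})$ factor is forced'') is precisely the theorem, and as written it is not an argument. Two concrete problems. First, pushing out $1 \to \pi_1(\bar{E}) \to \pi_1(E) \to G_F \to 1$ along $\pi_1(\bar{E}) \to T_3(\bar{E})$ produces an extension of $G_F$ by $T_3(\bar{E})$, \emph{not} of $G_F^{(3)}$; to descend the base you must exhibit a closed normal subgroup of this pushout that maps isomorphically onto $\ker(G_F \to G_F^{(3)})$ and meets $T_3(\bar{E})$ trivially. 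That existence is not a formal consequence of the action being pro-$3$: the preimage of $\ker(G_F \to G_F^{(3)})$ in the pushout is a central extension of $\ker(G_F \to G_F^{(3)})$ by $T_3(\bar{E})$, which has no a priori reason to split with a complement normal in the whole group. Second, and relatedly, injectivity into the \emph{maximal} pro-$3$ quotient cannot be proved by saying nothing forces collapse; one must exhibit at least one pro-$3$ quotient of $\pi_1(E)$ on which $T_3(\bar{E})$ injects, and your proposal never constructs one.

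The paper builds exactly such a witness, and this is the ingredient your proposal is missing. Let $F'$ be the maximal pro-$3$ extension of $F$. The mod-$3$ hypothesis implies $G_F$ acts on $T_3(\bar{E})$ through a pro-$3$ group, hence all $3$-power torsion of $\bar{E}$ is rational over $F'$. Therefore the tower of multiplication-by-$3^n$ isogenies over $E \otimes_F F'$ yields an extension $M$ of $F'(E)$ that is unramified over $E$, Galois over $F(E)$, with $\mathrm{Gal}(M/F'(E)) \cong T_3(\bar{E})$; since $\mathrm{Gal}(M/F(E))$ is an extension of the pro-$3$ group $G_F^{(3)} = \mathrm{Gal}(F'(E)/F(E))$ by the pro-$3$ group $T_3(\bar{E})$, it is pro-$3$, hence a quotient of $\pi_1(E)^{(3)}$, and $T_3(\bar{E})$ visibly injects into it. Alternatively, you could repair your own route by using the section $s\colon G_F \to \pi_1(E)$ furnished by the origin of $E$ (a rational point): in the pushout $T_3(\bar{E}) \rtimes G_F$, the subgroup $s(\ker(G_F \to G_F^{(3)}))$ is normal---precisely because $\ker(G_F \to G_F^{(3)})$ acts trivially on $T_3(\bar{E})$---and quotienting by it gives the pro-$3$ group $T_3(\bar{E}) \rtimes G_F^{(3)}$, which supplies the needed injectivity. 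Either way, the witness has to be constructed; your proposal stops just short of constructing it.
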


\begin{proof}
It is clear that $\pi_1(E)^{(3)}$ surjects onto $G_F^{(3)}$ by considering the cover $E\otimes_FF' $ of $E$ when $F'$ is the maximal pro-$3$ extension of $F$ in a separable closure of $F(E)$.  Let $L$ be the extension of $F'(E)$ corresponding to the kernel of the resulting homomorphism $\pi_1(E)^{(3)} \to G_F^{(3)}$.  To prove $(\ref{eq:up})$ is exact, it will suffice to show that the natural homomorphism $\omega:\pi_1(\bar{E})^{(3)} = T_3(\bar{E}) \to \mathrm{Gal}(L/F'(E))$ resulting from restricting automorphisms is an isomorphism.  The constant field of $L$ is $F'$ since it is Galois over $F$ and a pro-$3$ extension of $F'$.  So  the base change of $L/F'(E)$
by the extension $\bar{F}/F'$ gives an isomorphism of Galois groups $\mathrm{Gal}(\bar{F}L/\bar{F}(E)) = \mathrm{Gal}(L/F'(E))$.  Here $\mathrm{Gal}(\bar{F}L/\bar{F}(E))$ is a quotient of $T_3(\bar{E})$, and this implies $\omega$ is surjective.   To show $\omega$ is injective, we first claim that all of the $3$-power torsion of $\bar{E}$ is defined over $F'$.  For this, we use the hypothesis that $\bar{E}[3]$ is defined over $F$.  This implies that the action of $G_F$ on $T_3(\bar{E})$ is via matrices that are congruent to the identity mod $3$.  Since the multiplicative group of such matrices is a pro-$3$ group, all the $3$-power torsion of $\bar{E}$ is defined over the maximal pro-$3$ extension $F'$ of $F$.  Now the tower of isogenies over $E\otimes_F F'$ produced by multiplication by powers of $3$ gives
an extension of $F'(E)$ that is Galois over $F(E)$ and has Galois group $T_3(\bar{E})$ over $F'(E)$.  This shows $\omega$ is injective and completes the proof of Lemma \ref{lem:exact3}.
\end{proof}

Let $\mathfrak{G}_0$ be the decomposition group (inside $\pi_1(E)$) of an inverse system of discrete valuations over the origin of $E$ in a cofinal system of finite \'etale covers of $E$.
The sequence (\ref{eq:up}) splits since the image of $\mathfrak{G}_0$ inside $\pi_1(E)^{(3)}$ is isomorphic to $G_F^{(3)}$  and disjoint from the image of $T_3(\bar{E})$ in $\pi_1(E)^{(3)}$. Since $9T_3(\bar{E})$ is a characteristic subgroup of $T_3(\bar{E})$, (\ref{eq:up}) leads to an exact sequence
\begin{equation}
\label{eq:up?}
0 \to \frac{T_3(\bar{E})}{9T_3(\bar{E})} \to \frac{\pi_1(E)^{(3)}}{9T_3(\bar{E})} \to G_F^{(3)} \to 1.
\end{equation}
Let $\sigma\in G_F^{(3)}$. Since $\sigma$ acts on $T_3(\bar{E})$ as the identity modulo $3T_3(\bar{E})$, we have
\begin{equation}
\label{eq:good!}
(\sigma-1)^2(T_3(\bar{E})) \subset 9T_3(\bar{E}).
\end{equation}
Since
$$(\sigma^9-1) \equiv (\sigma-1)^9 + 3\sigma^3(\sigma-1)^3\mod 9\mathbb{Z}[\sigma]$$
we obtain
\begin{equation}
\label{eq:gooder!}
(\sigma^9-1)(T_3(\bar{E})) \subset  (\sigma-1)^3(T_3(\bar{E})) +9T_3(\bar{E})   \subset 9T_3(\bar{E})
\end{equation}
where the second inclusion follows from (\ref{eq:good!}).
In (\ref{eq:up?}), we view $T_3(\bar{E}) / 9T_3(\bar{E})=\bar{E}[9]$ as a (normal) subgroup of $\pi_1(E)^{(3)}/9T_3(\bar{E})$, and we identify $G_F^{(3)}$ with the image of the decomposition group $\mathfrak{G}_0$ inside $\pi_1(E)^{(3)}/9T_3(\bar{E})$. Let $(G_F^{(3)})^9$ be the (normal) subgroup of $G_F^{(3)}$ generated by all $9^{\mathrm{th}}$ powers. By (\ref{eq:gooder!}), the elements of $(G_F^{(3)})^9$ commute with the elements of $\bar{E}[9]$, implying that $(G_F^{(3)})^9$ is a normal subgroup of $\pi_1(E)^{(3)}/9T_3(\bar{E})$ that has trivial intersection with $\bar{E}[9]$. Hence (\ref{eq:up?}) leads to an exact sequence
\begin{equation}
\label{eq:up2}
0 \to \frac{T_3(\bar{E})}{9T_3(\bar{E})} \to \frac{\pi_1(E)^{(3)}/9T_3(\bar{E})}{(G_F^{(3)})^9} \to \frac{G_F^{(3)}}{(G_F^{(3)})^9} \to 1 .
\end{equation}
We define
\begin{equation}
\label{eq:needthis}
\overline{\mathcal{T}}:=\frac{T_3(\bar{E})}{9T_3(\bar{E})} = \bar{E}[9],\quad
\overline{G}:=\frac{\pi_1(E)^{(3)}/9T_3(\bar{E})}{ (G_F^{(3)})^9}\quad\mbox{and}\quad
\overline{G}_F:=\frac{G_F^{(3)}}{ (G_F^{(3)})^9}.
\end{equation}
Letting $\xi:\overline{G}_F\to \mathrm{Aut}(\overline{\mathcal{T}})=\mathrm{GL}_2(\mathbb{Z}/9)$ be the continuous group homomorphism induced by (\ref{eq:up2}), $\overline{G}$ is the semidirect product 
$$\overline{G} = \overline{\mathcal{T}} \rtimes_\xi \overline{G}_F.$$
We view $\overline{\mathcal{T}}$ as a subgroup of $\overline{G}$ and we identify $\overline{G}_F$ with the image of the decomposition group $\mathfrak{G}_0$ inside $\overline{G}$, which has trivial intersection with $\overline{\mathcal{T}}$. 

If $\chi:\pi_1(E)\to \mathbb{Z}/3$ is a non-trivial character, then $\chi$ factors through the maximal elementary abelian $3$-quotient of $\pi_1(E)$, and hence through $\overline{G}$. Since the group $H$ defined in Remark \ref{rem:U4!} has exponent 9, we see that $\langle \chi,\chi,\chi\rangle$ does not contain zero if and only if $\chi$, when viewed as a character from $\overline{G}$ to $\mathbb{Z}/3$, cannot be lifted to a continuous group homomorphism $\rho: \overline{G} \to H$ such that $\chi=\psi\circ \rho$. 

\begin{theorem}
\label{thm:classifiynontrivial!}
Let $F$ be a field whose characteristic is not $3$, and let $E$ be an elliptic curve over $F$ such that the $3$-torsion $\bar{E}[3]$ is defined over $F$. Let $\chi:\pi_1(E)\to\mathbb{Z}/3$ be a character.
Let $\overline{K}_T$ be the image in $\bar{E}[9]$ of the kernel of $\chi$ restricted to $T_3(\bar{E})$, and let $K_F$ be the kernel of $\chi$ restricted to the decomposition group $\mathfrak{G}_0$. Then $\langle \chi,\chi,\chi \rangle$ does not contain zero if and only if 
the restriction $\bar{\chi}$ of $\chi$ to $\pi_1(\bar{E})$ is non-zero and
one of the following two conditions holds:
\begin{itemize}
\item[(1)]
there exist elements $a\in \overline{K}_T-3\bar{E}[9]$ and $\sigma\in G_F^{(3)}$ such that $\sigma(a)\not\in (\mathbb{Z}/9)\, a$, 
or
\item[(2)]
the fixed field of $K_F$ inside $\bar{F}$ is a cubic extension of F that does not contain any primitive ninth root $\zeta$ of unity, and for all $a\in \overline{K}_T-3\bar{E}[9]$ and all $b\in \bar{E}[9]-\overline{K}_T$ and all $\iota\in K_F$ with $\iota(\zeta)=\zeta^4$, we have $(\iota-4)(b)\not\in(\mathbb{Z}/9)\,a$.
\end{itemize}
\end{theorem}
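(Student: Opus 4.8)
The plan is to work entirely with the reformulation recorded just before the statement: since $\bar\chi\neq 0$ is forced (otherwise $\chi$ factors through $G_F$ and Corollary~\ref{cor:nice!} gives $0\in\langle\chi,\chi,\chi\rangle$), it suffices to decide when there is \emph{no} continuous homomorphism $\rho\colon\overline{G}\to H$ with $\psi\circ\rho=\chi$. Writing $\rho(g)=N(\chi(g),u(g),v(g),w(g))$ and using the splitting $\overline{G}=\overline{\mathcal{T}}\rtimes_\xi\overline{G}_F$ coming from \eqref{eq:up2}, such a $\rho$ is the same as a pair $(\rho_T,\rho_F)$ of homomorphisms on $\overline{\mathcal{T}}=\bar E[9]$ and on (the image of the decomposition group) $\overline{G}_F$, both lifting $\chi$ through $\psi$, subject to the conjugation compatibility $\rho_F(\sigma)\,\rho_T(t)\,\rho_F(\sigma)^{-1}=\rho_T(\sigma t)$. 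I would first record the structural facts about $H$ from Remark~\ref{rem:U4!} that drive everything: two elements commute iff $a(w'-u')=a'(w-u)$, every $N(0,u,v,w)$ has order dividing $3$, and $N(a,u,v,w)^3=N(0,0,a,0)\in H'$.

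First I analyze $\rho_T$. Because $\overline{\mathcal{T}}$ is abelian and $\chi$ is nonzero on it, the commutation rule forces $\rho_T(\overline{K}_T)\subseteq Z(H)$; moreover the cube formula shows that $\rho_T$ on $\bar E[3]=3\bar E[9]$ is \emph{canonical}, namely $\rho_T(3s)=N(0,0,\chi(s),0)$, so that $\ker(\rho_T)\cap\bar E[3]=3\overline{K}_T$ for \emph{every} admissible $\rho_T$. Since $\rho_T(\overline{K}_T)$ is central, the compatibility applied to $t\in\overline{K}_T$ reads $\rho_T((\sigma-1)t)=1$; as $(\sigma-1)t\in\bar E[3]$, this holds iff $(\sigma-1)t\in 3\overline{K}_T$. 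For an order-$9$ element $a\in\overline{K}_T-3\bar E[9]$ this is exactly $\sigma(a)\in(\mathbb{Z}/9)\,a$. Thus if condition~(1) holds the compatibility is already violated and no $\rho$ exists. Conversely, the negation of (1) is precisely $(\sigma-1)\overline{K}_T\subseteq 3\overline{K}_T$ for all $\sigma$, which makes the compatibility on all of $\overline{K}_T$ automatic; I assume this from now on.

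With (1) excluded, the only remaining compatibility is at a basis vector $b\in\bar E[9]-\overline{K}_T$ (here $\{a,b\}$ is a $\mathbb{Z}/9$-basis, both of order $9$). Writing $\sigma b=3\alpha'(\sigma)a+(1+3\beta'(\sigma))b$, one checks using $\rho_T(a)\in Z(H)$ and $N(a,\ldots)^3=N(0,0,a,0)$ that the single $H'$-component of the compatibility becomes the scalar identity $\chi(\sigma)(w_b-u_b)-\chi(b)\,(W(\sigma)-U(\sigma))=\chi(b)\,\beta'(\sigma)$, where $U,W$ are the $(1,3)$- and $(2,4)$-coordinates of $\rho_F$. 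Since $W-U$ is automatically a homomorphism $\overline{G}_F\to\mathbb{Z}/3$ and so is $\beta'$ (the mod-$9$ scaling character of $b$), this says $W-U$ must equal $\Delta=\chi(b)^{-1}(w_b-u_b)\chi_F-\beta'$, where $\chi_F:=\chi|_{\mathfrak{G}_0}$; although $w_b-u_b$ is free, it only shifts $\Delta$ by a multiple of $\chi_F$ and hence does not affect the obstruction because $\chi_F\cup\chi_F=0$. Thus existence of $\rho$ reduces to the existence of a homomorphism $\rho_F\colon\overline{G}_F\to H$ lifting $\chi_F$ with $W-U=\Delta$, and the obstruction to solving the remaining corner ($(1,4)$-)equation is a \emph{single} class $\omega=\beta(\chi_F)-\chi_F\cup\beta'\in\HH^2(\overline{G}_F,\mathbb{Z}/3)$, independent of all choices, with $\omega=0$ iff $0\in\langle\chi,\chi,\chi\rangle$.

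The crux — and the step I expect to be hardest — is to show that $\omega\neq 0$ is equivalent to condition~(2). The essential subtlety is that $\omega$ must be tested in $\HH^2(\overline{G}_F,\mathbb{Z}/3)$ for the specific quotient $\overline{G}_F$ of $G_F^{(3)}$ appearing in \eqref{eq:up2}, and \emph{not} over $G_F$, where the Min\'a\v{c}--T\^an theorem would force $\omega=0$; this is exactly what makes the criterion element-wise. The Weil pairing on $\bar E[9]$, Galois-equivariant by Remark~\ref{rem:voodoo}, yields $\beta'=\kappa'-\gamma'$, where $\kappa'$ is the mod-$3$ part of the cyclotomic character ($\sigma\zeta=\zeta^{1+3\kappa'(\sigma)}$) and $\gamma'$ is the scaling character of the stable line $\langle a\rangle$. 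Both $\beta(\chi_F)$ and $\chi_F\cup\beta'$ restrict to $0$ on $K_F=\ker\chi_F$, so I would run inflation--restriction for $1\to K_F\to\overline{G}_F\to\mathbb{Z}/3\to 1$: the Bockstein term is inflated from $\HH^2(\mathbb{Z}/3,\mathbb{Z}/3)$, while the transgression of $\chi_F\cup\beta'$ is governed by $\beta'|_{K_F}$. Matching the two contributions should show that $\omega\neq 0$ holds exactly when, first, the fixed field of $K_F$ is a cubic extension of $F$ not containing $\zeta$ — equivalently $\chi_F\neq 0$, $\zeta\notin F$, and this field differs from $F(\zeta)$, which is condition~(2)(a) and guarantees an $\iota\in K_F$ with $\iota\zeta=\zeta^4$ — and, second, every such $\iota$ satisfies $\gamma'(\iota)\neq 0$. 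Since $\iota\zeta=\zeta^4$ gives $\gamma'(\iota)+\beta'(\iota)=\kappa'(\iota)=1$, the relation $\gamma'(\iota)\neq 0$ is equivalent to $\beta'(\iota)\neq 1$, which in turn is equivalent to $(\iota-4)(b)\notin(\mathbb{Z}/9)\,a$, i.e.\ to condition~(2)(b). I would conclude by verifying the two implications concretely: when (2) fails I construct $\rho_F$ (hence $\rho$) explicitly using an $\iota$ with $\gamma'(\iota)=0$, and when (2) holds I exhibit a $2$-cycle in $\overline{G}_F$ built from such an $\iota$ and a $\chi_F$-generator on which $\omega$ evaluates nontrivially.
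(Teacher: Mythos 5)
Your structural reduction is sound and is, in effect, a clean repackaging of what the paper does with explicit matrices: the forced values $\rho_T(3s)=N(0,0,\chi(s),0)$ on $3\bar{E}[9]$, the centrality of $\rho_T(\overline{K}_T)$, both directions of the condition~(1) analysis, the reduction (once (1) is excluded) to finding $\rho_F$ with $W-U\equiv-\beta'\pmod{\langle\chi_F\rangle}$, the resulting single obstruction class $\omega$, the Weil-pairing identity $\beta'=\kappa'-\gamma'$, and the translation of (2)(b) into ``$\gamma'(\iota)\neq 0$ for every admissible $\iota$'' are all correct. The genuine gap is that the actual content of the theorem --- the equivalence of $\omega\neq 0$ with condition (2) --- is only asserted (``matching the two contributions should show\dots''), and your concluding plan cannot deliver it. For the direction ``(2) fails $\Rightarrow$ the product contains $0$'' you propose to build $\rho_F$ ``using an $\iota$ with $\gamma'(\iota)=0$''; but (2) also fails in situations where no such $\iota$ exists at all, namely when $\chi_F=0$, when $\zeta\in F$, or when the fixed field of $K_F$ is $F(\zeta)$ itself. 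These are precisely the cases the paper must treat by separate explicit constructions (its case $K_F=\overline{G}_F$, with the lift \eqref{eq:useful1}, and its case $\zeta\in N$, where Kummer theory supplies the cyclic degree-$9$ extension $F(\sqrt[9]{\alpha})$ and the lift \eqref{eq:useful2}); nothing in your sketch produces a lift there.

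Moreover, the ``essential subtlety'' on which you hang the remaining analysis is not real, and this matters for your strategy. Min\'a\v{c}--T\^an \cite{MinacTan2016} says nothing about the \emph{fixed} class $\beta(\chi_F)-\chi_F\cup\beta'$: the Massey vanishing theorem concerns the coset $\langle\chi_F,\chi_F,\chi_F\rangle=-\beta(\chi_F)+\chi_F\cup\HH^1(F,\mathbb{Z}/3)$, i.e.\ it allows $\beta'$ to vary, so it does not force $\omega=0$ over $G_F$. In the other direction, since $H$, $U_3(\mathbb{Z}/3)$ and $\mathbb{Z}/9$ all have exponent dividing $9$, every continuous homomorphism from $G_F^{(3)}$ into any of them kills $(G_F^{(3)})^9$; hence each lifting problem in sight --- and with it the vanishing of $\omega$ --- can be tested over $G_F$ exactly as well as over $\overline{G}_F$. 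Consequently the dichotomy in the theorem cannot be located in the kernel of the inflation map $\HH^2(\overline{G}_F,\mathbb{Z}/3)\to\HH^2(G_F,\mathbb{Z}/3)$, which is where your inflation--restriction plan is looking for it. Over $G_F$ one has $\beta(\chi_F)=\chi_F\cup\kappa'$ (valid because $\mu_3\subset F$), so $\omega$ becomes the cup product $\chi_F\cup\gamma'$, and the whole difficulty is to relate the (non)vanishing of this symbol to the element-wise condition (2); that step is genuinely arithmetic rather than formal, and it is exactly what the paper's Kummer-theoretic case analysis (the structure of $\mathrm{Gal}(F(\sqrt[9]{\alpha},\zeta)/F)$, the conjugacy of the three extensions $\overline{\iota}_i$ of $\overline{\iota}$, and the final matrix contradiction) is carrying out. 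None of that appears in your proposal, so as it stands the hard half of the theorem is unproved.
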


\begin{proof}
We prove Theorem \ref{thm:classifiynontrivial!} by going through all possible cases and showing that $\langle \chi,\chi,\chi\rangle$ contains zero unless the restriction $\bar{\chi}$ of $\chi$ to $\pi_1(\bar{E})$ is non-zero and either condition (1) or condition (2) holds.

If $\bar{\chi}$ is zero, then it follows from the Massey vanishing results in \cite{MinacTan2016} (see Remark \ref{rem:duh!} and Proposition \ref{prop:MasseyRestrict}) that $\langle \chi,\chi,\chi \rangle$ contains zero. For the remainder of the proof, we assume that $\bar{\chi}$ is non-zero.

As noted in the paragraph before the statement of Theorem \ref{thm:classifiynontrivial!}, we can and will replace $\pi_1(E)$ by $\overline{G}$ in our arguments. In particular, we will replace $\bar{\chi}$ by the restriction of $\chi$ to $\overline{\mathcal{T}}=\bar{E}[9]$ and we will identify $\overline{K}_T$ with the kernel of this restriction. We will also replace $K_F$ by the kernel of $\chi$ restricted to $\overline{G}_F$ which we identified with the image of the decomposition group $\mathfrak{G}_0$ inside $\overline{G}$. Moreover, we will replace the statements in conditions (1) and (2) about $G_F^{(3)}$ by the corresponding statements about $\overline{G}_F$ (inside $\overline{G}$). Let $F_\infty\subset \bar{F}$ be such that we can identify $\overline{G}_F=\mathrm{Gal}(F_\infty/F)$. 

Suppose first that condition (1) of Theorem \ref{thm:classifiynontrivial!} holds. This means there exist elements $a\in \overline{K}_T-3\bar{E}[9]$ and $\sigma\in \overline{G}_F$ such that $\sigma(a)\not\in (\mathbb{Z}/9)\, a$. Since $\bar{\chi}\ne 0$, there exists an element $b\in\bar{E}[9]$ such that $\chi(b)=1$. Hence $\{a,b\}$ is a basis of $\bar{E}[9]$ over $\mathbb{Z}/9$, and we can write $\xi(\sigma)$ as a matrix in $\mathrm{GL}_2(\mathbb{Z}/9)$ with respect to this basis. Since $\xi(\sigma)\,a\not\in (\mathbb{Z}/9)\,a$, there exist $\mu_1,\mu_2\in\mathbb{Z}/9$ such that $\mu_2\not\equiv 0$ mod 3 and
\begin{equation}
\label{eq:soneedy}
(\xi(\sigma)-I)\, a = 3\mu_1 \,a + 3\mu_2 \,b.
\end{equation}
We now want to use the elements $a,b,\sigma$ to show that $\chi$ cannot be lifted to a group homomorphism $\rho:\overline{G}\to H$. Suppose to the contrary that such a $\rho$ exists.
 The entries immediately above the main diagonal of $\rho(a)$, $\rho(b)$ and $\rho(\sigma)$ are $0$, $1$ and $\chi(\sigma)$, respectively. This means that there exist $r,s,t,u,v,w,x,y,z\in\mathbb{Z}/3$ such that
$$ \rho(a) =\left(\begin{array}{cccc} 1&0&r&s\\0&1&0&t\\0&0&1&0\\0&0&0&1\end{array}\right),\quad
\rho(b)=\left(\begin{array}{cccc} 1&1&u&v\\0&1&1&w\\0&0&1&1\\0&0&0&1\end{array}\right)
 \quad\mbox{and}\quad
 \rho(\sigma)=\left(\begin{array}{cccc} 1&0&x&y\\0&1&0&z\\0&0&1&0\\0&0&0&1\end{array}\right)
 \rho(b)^{\chi(\sigma)}.$$
 Since $\rho$ is a group homomorphism and $a$ and $b$ commute, we must have that $r=t$ by (\ref{eq:commutator}), which implies that $\rho(a)$ is in the center of $H$. Moreover, $\rho$ must 
 satisfy
 $$[\rho(\sigma),\rho(a)]= \rho(\sigma)\rho(a)\rho(\sigma)^{-1} \rho(a)^{-1} = \rho((\xi(\sigma)-I)\, a).$$  Since $\rho(a)$ is in the center of $H$, this means that $\rho((\xi(\sigma)-I)\, a)$ must be the identity matrix in $H$. However, by (\ref{eq:soneedy}), we obtain that
$$\rho((\xi(\sigma)-I)\, a) = \rho(a)^{3\mu_1} \rho(b)^{ 3\mu_2 } = \left(\begin{array}{cccc}1&0&0&\mu_2\\0&1&0&0\\0&0&1&0\\0&0&0&1\end{array}\right)$$
where the second equation follows from (\ref{eq:powers}). Since $\mu_2\not\equiv 0$ mod 3, this is a contradiction, which means $\rho$ does not exist and $\langle \chi,\chi,\chi\rangle$ does not contain zero.

For the remainder of the proof, we assume that condition (1) does not hold, which means that for all $a\in \overline{K}_T-3\bar{E}[9]$ and all $\sigma\in \overline{G}_F$, we have $\sigma(a)\in (\mathbb{Z}/9)\, a$. Since $\bar{\chi}$ is non-zero, the kernel of $\chi$ restricted to $\bar{E}[9]$ has index 3 in $\bar{E}[9]$. Hence there exists an element $a\in \overline{K}_T-3\bar{E}[9]$. Let $b$ be any element in $\bar{E}[9]-\overline{K}_T$. Then $\{a,b\}$ is a basis of $\bar{E}[9]$ over $\mathbb{Z}/9$ and with respect to this basis, $\xi(\sigma)$ is given by a matrix in $\mathrm{GL}_2(\mathbb{Z}/9)$ of the form
\begin{equation}
\label{eq:matrix1}
\xi(\sigma) = I + 3 \begin{pmatrix} \lambda_1(\sigma) & \mu_1(\sigma)\\0&\mu_2(\sigma)\end{pmatrix}
\quad\mbox{for all $\sigma\in \overline{G}_F$.}
\end{equation}

Suppose first that $K_F=\overline{G}_F$, which means that $\chi(\sigma)=0$ for all $\sigma \in \overline{G}_F$. We will prove that $\langle \chi,\chi,\chi\rangle$ contains zero by constructing a group homomorphism $\rho:\overline{G}\to H$ lifting $\chi$. We define $\rho(a)$ to be the identity in $H$, and 
\begin{equation}
\label{eq:useful1}
\rho(b) =\left(\begin{array}{cccc} 1&\chi(b)&0&0\\0&1&\chi(b)&0\\0&0&1&\chi(b)\\0&0&0&1\end{array}\right)
\quad\mbox{and}\quad
\rho(\sigma)=\left(\begin{array}{cccc} 1&0&0&0\\0&1&0&-\mu_2(\sigma)\\0&0&1&0\\0&0&0&1\end{array}\right)
\quad\mbox{for all $\sigma\in\overline{G}_F$.}
\end{equation}
Then $\rho(a)$ commutes with $\rho(b)$ and $\rho(\sigma)$ for all $\sigma\in\overline{G}_F$. Since $(\xi(\sigma)-I)\,a=3\lambda_1(\sigma) a$, this implies that $\rho$ satisfies the commutator relation $[\rho(\sigma),\rho(a)]=\rho(a)^{3\lambda_1(\sigma)}=\rho((\xi(\sigma)-I)\,a)$  for all $\sigma\in\overline{G}_F$. On the other hand, (\ref{eq:powers}) and (\ref{eq:commutator}) show that for all $\sigma\in\overline{G}_F$, we have
$$[\rho(\sigma),\rho(b)]=\left(\begin{array}{cccc} 1&0&0&\mu_2(\sigma)\chi(b)\\0&1&0&0\\0&0&1&0\\0&0&0&1\end{array}\right) = \rho(a)^{3\mu_1(\sigma)}\rho(b)^{3\mu_2(\sigma)}$$
which implies by (\ref{eq:matrix1}) that $\rho$ satisfies the commutator relation $[\rho(\sigma),\rho(b)]=\rho((\xi(\sigma)-I)\,b)$  for all $\sigma\in\overline{G}_F$. Finally, if $\sigma,\tau\in \overline{G}_F$ then $\xi(\sigma\tau)=\xi(\sigma)\circ\xi(\tau)$, which implies that $\mu_2(\sigma\tau) = \mu_2(\sigma)+\mu_2(\tau)$. Therefore, $\rho(\sigma\tau) = \rho(\sigma)\rho(\tau)$, which shows that $\rho$ is a group homomorphism   lifting $\chi$.

For the remainder of the proof, we assume that $K_F\ne \overline{G}_F$, which means that $K_F=\mathrm{Gal}(F_\infty/N)$ for a degree 3 Galois extension $N/F$. Since the Weil pairing is non-degenerate and Galois equivariant, we obtain, using Remark \ref{rem:voodoo}, that $F$ contains a primitive third root of unity. It follows by Kummer theory that $N=F(\sqrt[3]{\alpha})$ for some $\alpha\in F$. Let $\zeta\in \bar{F}$ be a primitive ninth root of unity. Then $F(\zeta)$ is a cyclic extension of $F$ of degree 1 or 3. In particular, $\zeta\in F_\infty$.

Suppose $\zeta\in N$. Let $\sqrt[9]{\alpha}\in \bar{F}$ be a ninth root of $\alpha$. By Kummer theory, $F(\sqrt[9]{\alpha})$ is a cyclic Galois extension of $F$ of degree 9, so $\sqrt[9]{\alpha}\in F_\infty$. Let $\overline{\tau}$ be the generator of $\mathrm{Gal}(F(\sqrt[9]{\alpha})/F)$ with $\overline{\tau}(\sqrt[9]{\alpha})=\sqrt[9]{\alpha}\,\zeta$. Since $\mathrm{Gal}(F_\infty/F(\sqrt[9]{\alpha}))\subset K_F$, it follows that $\chi$ factors through $\overline{\mathcal{T}} \rtimes_{\overline{\xi}} \mathrm{Gal}(F(\sqrt[9]{\alpha})/F)$ where $\overline{\xi}$ is defined by letting $\overline{\xi}(\overline{\tau})=\xi(\tau)$ when $\tau$ is an extension of $\overline{\tau}$ to $\overline{G}_F$. We will prove that $\langle \chi,\chi,\chi\rangle$ contains zero by constructing a group homomorphism $\rho:\overline{\mathcal{T}} \rtimes_{\overline{\xi}} \mathrm{Gal}(F(\sqrt[9]{\alpha})/F)\to H$ lifting $\chi$.  We define $\rho(a)$ to be the identity in $H$, define $\rho(b)$ as in (\ref{eq:useful1}), and define
\begin{equation}
\label{eq:useful2}
\rho(\overline{\tau})=\left(\begin{array}{cccc} 1&\chi(\tau)&0&0\\0&1&\chi(\tau)&-\mu_2(\tau)\\0&0&1&\chi(\tau)\\0&0&0&1\end{array}\right).
\end{equation}
Then $\rho(a)$ commutes with $\rho(b)$ and $\rho(\overline{\tau})$. We argue as above to see that $\rho$ satisfies the commutator relation $[\rho(\overline{\tau}),\rho(a)]=\rho((\overline{\xi}(\overline{\tau})-I)\,a)$. Moreover, (\ref{eq:powers}) and (\ref{eq:commutator}) show that 
$$[\rho(\overline{\tau}),\rho(b)]=\left(\begin{array}{cccc} 1&0&0&\mu_2(\tau)\chi(b)\\0&1&0&0\\0&0&1&0\\0&0&0&1\end{array}\right) = \rho(a)^{3\mu_1(\tau)}\rho(b)^{3\mu_2(\tau)}$$
which implies by (\ref{eq:matrix1}) that $\rho$ satisfies the commutator relation $[\rho(\overline{\tau}),\rho(b)]=\rho((\overline{\xi}(\overline{\tau})-I)\,b)$. Hence $\rho$ is a group homomorphism  lifting $\chi$.

For the remainder of the proof, we assume that $\zeta\not\in N$. Let $\overline{\iota}$ be the generator of $\mathrm{Gal}(F(\zeta)/F)$ with $\overline{\iota}(\zeta)=\zeta^4$. Let $\sqrt[9]{\alpha}$ be a ninth root of $\alpha$ in $\bar{F}$. Since $F(\sqrt[9]{\alpha},\zeta)$ is a splitting field of $x^9-\alpha$ over $F$, it is Galois. Let $\overline{\iota}_1\in \mathrm{Gal}(F(\sqrt[9]{\alpha},\zeta)/F(\sqrt[9]{\alpha}))$ be such that $\overline{\iota}_1(\zeta)=\zeta^4$, so $\overline{\iota}_1$ extends $\overline{\iota}$, and let $\overline{\tau}\in \mathrm{Gal}(F(\sqrt[9]{\alpha},\zeta)/F(\zeta))$ be such that $\overline{\tau}(\sqrt[9]{\alpha})=\sqrt[9]{\alpha}\,\zeta$. Using Galois theory, we see that $\mathrm{Gal}(F(\sqrt[9]{\alpha},\zeta)/F)$ is generated by $\overline{\iota}_1$ and $\overline{\tau}$ satisfying the relation $\overline{\iota}_1\circ\overline{\tau}\circ\overline{\iota}_1^{-1}=\overline{\tau}^4$. Notice that $\sqrt[9]{\alpha}\in F_\infty$ because $\mathrm{Gal}(F(\sqrt[9]{\alpha},\zeta)/F)$ is a $3$-group. Since $N\subset F(\sqrt[9]{\alpha},\zeta)$, it follows that $\chi$ factors through $\overline{\mathcal{T}} \rtimes_{\overline{\xi}} \mathrm{Gal}(F(\sqrt[9]{\alpha},\zeta)/F)$ where $\overline{\xi}(\overline{\iota}_1)=\xi(\iota_1)$ and $\overline{\xi}(\overline{\tau})=\xi(\tau)$ when $\iota_1$ and $\tau$ are elements in $\overline{G}_F$ that extend $\overline{\iota}_1$ and $\overline{\tau}$, respectively. In particular, $\chi(\iota_1)=0$ and $\chi(\tau)\ne 0$. The three elements of $\mathrm{Gal}(F(\sqrt[9]{\alpha},\zeta)/N)$ that extend $\overline{\iota}$ are $\overline{\iota}_1$, $\overline{\iota}_2$ and $\overline{\iota}_3$, where $\overline{\iota}_1(\sqrt[9]{\alpha})=\sqrt[9]{\alpha}$, $\overline{\iota}_2(\sqrt[9]{\alpha})=\sqrt[9]{\alpha}\,\zeta^3$ and $\overline{\iota}_3(\sqrt[9]{\alpha})=\sqrt[9]{\alpha}\,\zeta^6$. It follows that $\overline{\iota}_2=\overline{\tau}^{-1}\circ\overline{\iota}_1\circ\overline{\tau}$ and $\overline{\iota}_3=\overline{\tau}\circ\overline{\iota}_1\circ\overline{\tau}^{-1}$.

Under the assumptions of the previous paragraph, suppose condition (2) of Theorem \ref{thm:classifiynontrivial!} does not hold. In other words, there exists $a\in \overline{K}_T-3\bar{E}[9]$ and there exists $b\in \bar{E}[9]-\overline{K}_T$ and there exists $\iota\in K_F$ with $\iota(\zeta)=\zeta^4$ such that $(\iota-4)(b)\in(\mathbb{Z}/9)\,a$. In particular, we can use $\{a,b\}$ as the basis with respect to which we write the matrices in (\ref{eq:matrix1}). It follows that $\mu_2(\iota)\equiv 1$ mod 3 in (\ref{eq:matrix1}). Since $\iota$ restricts to one of $\overline{\iota}_1$, $\overline{\iota}_2$ or $\overline{\iota}_3$ in $\mathrm{Gal}(F(\sqrt[9]{\alpha},\zeta)/N)$ and since the latter three elements are conjugate to each other in $\mathrm{Gal}(F(\sqrt[9]{\alpha},\zeta)/F)$, this implies that also $\mu_2(\iota_1)\equiv 1$ mod 3 in (\ref{eq:matrix1}) for any $\iota_1\in K_F$ extending $\overline{\iota}_1$. We will prove that $\langle \chi,\chi,\chi\rangle$ contains zero by constructing a group homomorphism $\rho:\overline{\mathcal{T}} \rtimes_{\overline{\xi}} \mathrm{Gal}(F(\sqrt[9]{\alpha},\zeta)/F)\to H$ lifting $\chi$.  Define $\rho(a)$ to be the identity in $H$, define $\rho(b)$ as in (\ref{eq:useful1}), define $\rho(\overline{\tau})$ as in (\ref{eq:useful2}), and define
$$\rho(\overline{\iota}_1)=\left(\begin{array}{cccc} 1&0&0&0\\0&1&0&-\mu_2(\iota_1)\\0&0&1&0\\0&0&0&1\end{array}\right).$$
Then $\rho(a)$ commutes with $\rho(b)$, $\rho(\overline{\tau})$ and $\rho(\overline{\iota}_1)$. We argue as above to see that $\rho$ satisfies the commutator relations $[\rho(\overline{\sigma}),\rho(a)]=\rho((\overline{\xi}(\overline{\sigma})-I)\,a)$ and $[\rho(\overline{\sigma}),\rho(b)]=\rho((\overline{\xi}(\overline{\sigma})-I)\,b)$ for $\sigma\in\{\iota_1,\tau\}$. It remains to verify the equation $\rho(\overline{\iota}_1)\rho(\overline{\tau})\rho(\overline{\iota}_1^{-1})=\rho(\overline{\tau}^4)$, which is equivalent to the commutator relation $[\rho(\overline{\iota}_1),\rho(\overline{\tau})] = \rho(\overline{\tau})^3$. By (\ref{eq:powers}) and (\ref{eq:commutator}), we obtain
$$[\rho(\overline{\iota}_1),\rho(\overline{\tau})] = \left(\begin{array}{cccc} 1&0&0&\mu_2(\iota_1)\chi(\tau)\\0&1&0&0\\0&0&1&0\\0&0&0&1\end{array}\right)\quad\mbox{and}\quad
\rho(\overline{\tau})^3= \left(\begin{array}{cccc} 1&0&0&\chi(\tau)\\0&1&0&0\\0&0&1&0\\0&0&0&1\end{array}\right).$$
Since $\mu_2(\iota_1)\equiv 1$ mod 3, it follows that $\rho$ is a group homomorphism  lifting $\chi$. 

Finally suppose that for all $a\in \overline{K}_T-3\bar{E}[9]$ and all $b\in \bar{E}[9]-\overline{K}_T$ and all $\iota\in K_F$ with $\iota(\zeta)=\zeta^4$, we have $(\iota-4)(b)\not\in(\mathbb{Z}/9)\,a$. In other words, condition (2) of Theorem \ref{thm:classifiynontrivial!} holds. In particular, it follows that $\mu_2(\iota_1)\not\equiv 1$ mod 3 in (\ref{eq:matrix1}) for any $\iota_1\in K_F$ extending $\overline{\iota}_1$. We want to show that $\chi$ cannot be lifted to a group homomorphism $\rho:\overline{G}\to H$. Suppose to the contrary that such a $\rho$ exists. This means that there exist $r,s,t,u,v,w,x,y,z\in\mathbb{Z}/3$ such that
$$ \rho(b) =\left(\begin{array}{cccc} 1&\chi(b)&r&s\\0&1&\chi(b)&t\\0&0&1&\chi(b)\\0&0&0&1\end{array}\right), \;\;
\rho(\tau)=\left(\begin{array}{cccc} 1&\chi(\tau)&u&v\\0&1&\chi(\tau)&w\\0&0&1&\chi(\tau)\\0&0&0&1\end{array}\right)
 \;\;\mbox{and}\;\;
\rho(\iota_1)=\left(\begin{array}{cccc} 1&0&x&y\\0&1&0&z\\0&0&1&0\\0&0&0&1\end{array}\right).$$
By (\ref{eq:powers}) and (\ref{eq:commutator}), we obtain
$$[\rho(\iota_1),\rho(\tau)] = \left(\begin{array}{cccc} 1&0&0&\chi(\tau)(x-z)\\0&1&0&0\\0&0&1&0\\0&0&0&1\end{array}\right)\quad\mbox{and}\quad
\rho(\tau)^3= \left(\begin{array}{cccc} 1&0&0&\chi(\tau)\\0&1&0&0\\0&0&1&0\\0&0&0&1\end{array}\right).$$
Since $[\iota_1,\tau]=\tau^3$ and $\rho$ is a group homomorphism and since $\chi(\tau)\ne 0$, this forces $x-z\equiv 1$ mod $3$. On the other hand,
$$[\rho(\iota_1),\rho(b)] = \left(\begin{array}{cccc} 1&0&0&\chi(b)(x-z)\\0&1&0&0\\0&0&1&0\\0&0&0&1\end{array}\right)\quad\mbox{and}\quad
\rho(a)^{3\mu_1(\iota_1)}\rho(b)^{3\mu_2(\iota_1)}= \left(\begin{array}{cccc} 1&0&0&\chi(b)\mu_2(\iota_1)\\0&1&0&0\\0&0&1&0\\0&0&0&1\end{array}\right)$$
where the second equality follows since $\chi(a)=0$, which implies that $\rho(a)^3$ is the identiy matrix.
Since $[\iota_1,b]=(\xi(\iota_1)-I)\,b = 3\mu_1(\iota_1)\,a + 3\mu_2(\iota_1)\,b$ and $\rho$ is a group homomorphism and since $\chi(b)\ne 0$, this forces $x-z\equiv \mu_2(\iota_1)$ mod $3$. This is a contradiction, since $\mu_2(\iota_1)\not\equiv 1$ mod 3. Therefore, $\rho$ does not exist, which means that $\langle \chi,\chi,\chi\rangle$ does not contain zero. This completes the proof of Theorem \ref{thm:classifiynontrivial!}. 
\end{proof}

We now proceed to the proof of Theorem \ref{cor:nontrivialgeneral!} from the introduction.

\begin{proof}[Proof of Theorem $\ref{cor:nontrivialgeneral!}$]
If neither condition (i) nor condition (ii) of Theorem \ref{cor:nontrivialgeneral!} are satisfied, it follows from Theorem \ref{thm:classifiynontrivial!} that $\langle \chi,\chi,\chi \rangle$ contains zero for every character $\chi:\pi_1(E)\to\mathbb{Z}/3$. 

Suppose now that either condition (i) or condition (ii) of Theorem \ref{cor:nontrivialgeneral!} holds. As noted in the paragraph before the statement of Theorem \ref{thm:classifiynontrivial!}, we can and will replace $\pi_1(E)$ by $\overline{G}$. As before, let $F_\infty\subset\bar{F}$ be such that we can identify $\overline{G}_F=\mathrm{Gal}(F_\infty/F)$.

If condition (i) holds then there exist $a\in\bar{E}[9]$ and $\sigma\in \overline{G}_F$ such that $\sigma(a)\not\in (\mathbb{Z}/9)\,a$. In particular, $a\not\in 3\bar{E}[9]=\bar{E}[3]$. Let $b\in\bar{E}[9]$ be another element such that $\{a,b\}$ is a basis of $\bar{E}[9]$ over $\mathbb{Z}/9$. We can define a character $\chi:\overline{G}\to\mathbb{Z}/3$ by $\chi(a)=0$, $\chi(b)=1$ and $\chi(\sigma)=0$ for all $\sigma\in\overline{G}_F$. By condition (1) of Theorem \ref{thm:classifiynontrivial!}, it follows that $\langle \chi,\chi,\chi\rangle$ does not contain zero.

If condition (ii) holds, then $L:=F(\zeta)$ is not the only cubic extension of $F$ inside $\bar{F}$. By Kummer theory, there exists $\alpha\in F$ and a cube root $\sqrt[3]{\alpha}$ of $\alpha$ in $\bar{F}$ that does not lie in $F$ such that $F(\sqrt[3]{\alpha})\ne L$. Moreover, condition (ii) implies that for each $\sigma\in\overline{G}_F$, there exists a character $h: \overline{G}_F\to \mathbb{Z}/9$ such that $\sigma$ acts on $\bar{E}[9]$ as multiplication by a scalar of the form $\xi(\sigma)=1-3 h(\sigma)$. By the non-degeneracy of the Weil pairing, there exist two points $P_1,P_2\in \bar{E}[9]$ such that the Weil pairing $\langle P_1,P_2\rangle_{\mathrm{Weil}}=\zeta$. Since the Weil pairing is Galois equivariant and bilinear, we obtain that 
\begin{equation}
\label{eq:sigmazeta}
\sigma(\zeta) = \langle \sigma(P_1),\sigma(P_2)\rangle_{\mathrm{Weil}} = \langle (1-3h(\sigma))P_1,(1-3h(\sigma))P_2\rangle_{\mathrm{Weil}}=\zeta^{(1-3h(\sigma))^2} = \zeta^{1+3h(\sigma)}.
\end{equation}
Define a map $\chi:\overline{G}\to \mathbb{Z}/3$ by letting the restriction of $\chi$ to $\bar{E}[9]$ be a fixed non-zero character, and by letting $\chi(\sigma)=i(\sigma)$ for all $\sigma\in\overline{G}_F$ with $\sigma(\sqrt[3]{\alpha}) = \sqrt[3]{\alpha}\,\zeta^{3i(\sigma)}$. Then $\chi$ is a character of $\overline{G}$ since $\chi([\sigma,c]) = \chi((\xi(\sigma)-1)\,c) = \chi(-3h(\sigma)\,c) = 0$ for all $\sigma\in\overline{G}_F$ and $c\in\bar{E}[9]$. 
The kernel $K_F$ of $\chi$ restricted to $\overline{G}_F$ consists of all $\sigma\in\overline{G}_F$ that fix $\sqrt[3]{\alpha}$. Hence the fixed field of $K_F$ is the cubic extension $F(\sqrt[3]{\alpha})$ of $F$, which does not contain any primitive ninth root $\zeta$ of unity since $F(\sqrt[3]{\alpha})\ne L$. We have that $\mathrm{Gal}(L/F)=\langle \overline{\iota} \rangle$ where $\overline{\iota}(\zeta)=\zeta^4$. If $\iota\in\overline{G}_F$ is any extension of $\overline{\iota}$, we obtain from (\ref{eq:sigmazeta}) that $\iota(\zeta) = \zeta^{1+3h(\iota)}$. Since $\iota(\zeta) =\zeta^4$, this implies that $h(\iota)\equiv1$ mod 3. Hence $\xi(\iota) = 1-3h(\iota) \equiv 7$ mod 9. It follows that $(\iota-4)(c)=(\xi(\iota) - 4)c = 3c$ for all $c\in\bar{E}[9]$. If $a\in \overline{K}_T-3\bar{E}[9]$ and $b\in\bar{E}[9]-\overline{K}_T$ then $\{a,b\}$ is a basis for $\bar{E}[9]$ over $\mathbb{Z}/9$, so  $3b\not\in(\mathbb{Z}/9)\,a$. Hence $(\iota-4)(b)=3b\not\in(\mathbb{Z}/9)\,a$ for all such $a$ and $b$. By condition (2) of Theorem \ref{thm:classifiynontrivial!}, it follows that $\langle \chi,\chi,\chi\rangle$ does not contain zero.
\end{proof}

 %%%%%%%%%%%%%%%%%%%%%%%%%%%%%%%%%%%%%%%%%%%%%

\section{Non-vanishing triple Massey products for elliptic curves over number fields}
\label{s:ellipticnumber}

Conditions (i) and (ii) of Theorem \ref{cor:nontrivialgeneral!} depend on information concerning the action of $G_F$ on the $9$-torsion of an elliptic curve $E$ defined over $F$.  In this section we analyze two situations in which one has more control on this action.  The first arises from specializations of results of Igusa on Galois actions of generic elliptic curves.  The second arises from the Shimura reciprocity law for CM elliptic curves over number fields.

\begin{example}
\label{ex:igusa}  We first construct $E$ and $F$ for which condition (i) of Theorem \ref{cor:nontrivialgeneral!} is satisfied.  
Let $t$ be an indeterminate, and let $E_t$ be the elliptic curve defined over the field $\Q(t)$ by the equation
\begin{equation}
\label{eq:igusacurve}
y^2=4x^3 - \frac{27t}{t-1728}x - \frac{27t}{t-1728}.
\end{equation}
This curve $E_t$, which was considered by Igusa in \cite{Igusa1959}, has $j$-invariant $t$.

Given an integer $n$, we denote by $\Q(\bar{E}_t[n])$ the field obtained from $\Q(t)$ by adjoining the coordinates of the $n$-torsion points of $\bar{E}_t$ to $\Q(t)$. According to Igusa \cite[Theorem~3]{Igusa1959}, the Galois representation
$$\Gal(\Q(\bar{E}_t[n])/\Q(t)) \to \GL_2(\Z/n)$$
is surjective (hence bijective). It is a well-known fact that the determinant of this representation is the cyclotomic character. Therefore, over the field $k:=\Q(\zeta_n,t)$, the Galois representation
$$\Gal(k(\bar{E}_t[n])/k) \to \GL_2(\Z/n)$$
has image equal to $\SL_2(\Z/n)$.  If we set $n=9$, then by Galois theory we have an exact sequence
$$0 \to \Gal(k(\bar{E}_t[9])/k(\bar{E}_t[3])) \to \SL_2(\Z/9) \to \SL_2(\Z/3) \to 0$$
(here we use Igusa's result twice: for $n=9$ and for $n=3$).
According to Hilbert's irreducibility theorem, these Galois groups remain the same for infinitely many rational specializations $t_0$ of the parameter $t$. Therefore, one obtains infinitely many (non-isomorphic) elliptic curves $E_{t_0}$ over $\Q(\zeta_{9})$ such that
$$\#\Gal(\Q(\zeta_{9}, \bar{E}_{t_0}[9])/\Q(\zeta_{9}, \bar{E}_{t_0}[3])) = \frac{\#\SL_2(\Z/9)}{\#\SL_2(\Z/3)} = 27.$$
If we let $F_{t_0}=\Q(\zeta_{9}, \bar{E}_{t_0}[3])$ then the curve $E_{t_0}$ over the field $F_{t_0}$ satisfies condition (i) of Theorem \ref{cor:nontrivialgeneral!}, since $\#(\mathbb{Z}/9)^\times=6$ is strictly smaller than $27$.
\end{example}

\begin{example}
\label{ex:littleexample}
Suppose $F$ is a number field containing $\mathbb{Q}(\sqrt{3},\sqrt{-1})$ that does not contain a primitive ninth root of unity, and let $E$ be the elliptic curve with model $y^2 = x^3 - 1$. Then $\bar{E}[3]$ is defined over $F$. Since $F$ does not contain a primitive ninth root of unity, it follows from Theorem \ref{cor:nontrivialgeneral!} that there exists an element $\chi \in \HH^1(E,\mathbb{Z}/3)$ with non-vanishing triple Massey product.
\end{example}

Note that in Example \ref{ex:littleexample} we do not determine which of the conditions (i) or (ii) of Theorem \ref{cor:nontrivialgeneral!} holds.  Distinguishing which of these holds involves controlling the action of $G_F$ on  $\bar{E}[9]$.  It is natural then to consider CM elliptic curves and to analyze the information about this Galois action that is provided by the Shimura reciprocity law.

\begin{hypo}
\label{hyp:FraukeIsToBlame}
Let $K$ be an imaginary quadratic field.  Fix an embedding of  $K$ into $\mathbb{C}$.  
Let $\mathcal{O}$ be an order in $K$.  Suppose $\mathcal{A}$ is a non-zero finitely generated $\mathcal{O}$-submodule of $K$.
Fix an isomorphism $\xi:\mathbb{C}/\mathcal{A} \to E$, where $E$ is an elliptic curve  over $\mathbb{C}$ with CM by  $\mathcal{O}$ and this isomorphism is equivariant for the action of $\mathcal{O}$.   Let $L$ be the abelian extension of $K$ that is the ring class field of $\mathcal{O}$.  For $r = 3, 9$ define $F_r$ to be the extension of $L$ obtained by adjoining the coordinates of the $r$-torsion points of $\bar{E}$. 
\end{hypo}  

\begin{theorem}
\label{thm:CM} 
Under Hypothesis $\ref{hyp:FraukeIsToBlame}$, suppose $\mathbb{Z}_3 \otimes_{\mathbb{Z}} \mathcal{A}$ is a free module over $\mathbb{Z}_3 \otimes_{\mathbb{Z}} \mathcal{O}$, which is the case if $\mathbb{Z}_3 \otimes_{\mathbb{Z}} \mathcal{O}$ is \'etale over $\mathbb{Z}_3$.  Then:
\begin{itemize}
\item[(a)]The curve $E$ over the field $F_3$ satisfies condition $(i)$ of Theorem $\ref{cor:nontrivialgeneral!}$.
\item[(b)] There is a field $N$ such that $F_3 \subset N \subset F_9$ and $E$ satisfies condition $(ii)$ of Theorem $\ref{cor:nontrivialgeneral!}$  over $N$.
\end{itemize} 
\end{theorem}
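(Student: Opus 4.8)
The plan is to use the complex multiplication to reduce both parts to a computation with the mod-$9$ Galois representation, which CM forces to be abelian and, after the freeness hypothesis, multiplicative.

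First I would set $\mathcal{O}_3 := \mathbb{Z}_3\otimes_{\mathbb{Z}}\mathcal{O}$ and $\mathcal{A}_3 := \mathbb{Z}_3\otimes_{\mathbb{Z}}\mathcal{A}$, and use the $\mathcal{O}$-equivariant analytic isomorphism $\xi$ to identify the Tate module $T_3(\bar E)$ with $\mathcal{A}_3$ as an $\mathcal{O}_3$-module. The hypothesis makes this free of rank one over $\mathcal{O}_3$. Since $E$ has a model over the ring class field $L$ and the main theorem of complex multiplication shows that $\Gal(\bar F/L)$ acts $\mathcal{O}$-linearly on $T_3(\bar E)$, freeness implies this action is multiplication by a unit; that is, the representation is a character $u\colon \Gal(\bar F/L)\to \mathcal{O}_3^\times$, and on $\bar E[9]=\mathcal{O}_3/9\mathcal{O}_3$ it is multiplication by $u(\sigma)\bmod 9$.

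Next I would pin down the image of $u$ using the idelic description of $L$ together with the Shimura reciprocity law: up to the image of the global units $\mathcal{O}^\times$, the character $u$ surjects onto $\mathcal{O}_3^\times$. Because $F_3 = L(\bar E[3])$, the group $\Gal(\bar F/F_3)$ is exactly the set of $\sigma$ with $u(\sigma)\equiv 1 \bmod 3\mathcal{O}_3$; and since the only unit of $\mathcal{O}$ congruent to $1$ modulo $3\mathcal{O}$ is $1$ (a direct check, as $\mathcal{O}^\times$ consists of roots of unity), I get an isomorphism $\Gal(F_9/F_3)\cong (1+3\mathcal{O}_3)/(1+9\mathcal{O}_3)\cong \mathcal{O}_3/3\mathcal{O}_3$, a group of order $9$ acting faithfully on $\bar E[9]$ by multiplication.

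Both conclusions then follow by linear algebra. The scalars (multiplication by $1+3\mathbb{Z}_3$) form a subgroup $Z_0\cong\mathbb{Z}/3$, strictly smaller than the order-$9$ image, so some element of $\Gal(\bar F/F_3)$ acts non-scalarly; this is precisely condition $(i)$ of Theorem \ref{cor:nontrivialgeneral!} over $F_3$, giving $(a)$. For $(b)$ I would take $N := F_9^{Z_0}$, so that $F_3\subset N\subset F_9$ and $\Gal(F_9/N)=Z_0$ acts by scalars; hence the action of $G_N^{(3)}$ on $\bar E[9]$ is scalar. By the Weil pairing a scalar $1+3c$ acts on $\mu_9$ as $\zeta\mapsto\zeta^{(1+3c)^2}=\zeta^{1+6c}$, which is faithful on $Z_0$, so $\mu_9\not\subseteq N$ and a primitive ninth root of unity $\zeta$ lies outside $N$. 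Finally, $N$ is a number field containing $\mu_3$, so by Kummer theory it has infinitely many cyclic cubic extensions; in particular $N(\zeta)$ is not the only one. Thus condition $(ii)$ holds over $N$, proving $(b)$. The hard part is the middle step: extracting from Shimura reciprocity that the $3$-adic character $u$ really surjects onto $\mathcal{O}_3^\times$ rather than a proper subgroup, and correctly accounting for the finite global unit group $\mathcal{O}^\times$, so that $\Gal(F_9/F_3)$ is the full order-$9$ group. Everything after that---the identification of the scalars, the order count, and the Weil-pairing computation of the action on $\mu_9$---is routine.
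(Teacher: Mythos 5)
Your outline reproduces the skeleton of the paper's proof: identify $T_3(\bar{E})$ with $\mathcal{A}_3:=\mathbb{Z}_3\otimes_{\mathbb{Z}}\mathcal{A}$, free of rank one over $\mathcal{O}_3:=\mathbb{Z}_3\otimes_{\mathbb{Z}}\mathcal{O}$, so that $\Gal(\bar{F}/L)$ acts through a character $u$ into $\mathcal{O}_3^\times$; show that $\Gal(F_9/F_3)$ is all of $(1+3\mathcal{O}_3)/(1+9\mathcal{O}_3)$, of order $9$; then deduce (a) by comparing with the order-$3$ group of scalars, and (b) by passing to the fixed field of the scalar subgroup and using the Weil pairing and Kummer theory. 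Everything downstream of the order-$9$ claim is correct and agrees with the paper. But the step you yourself flag as ``the hard part'' is exactly where the paper's work lies, and the bookkeeping you propose for it fails in one of the two cases. Write $I=u(\Gal(\bar{F}/L))\subseteq\mathcal{O}_3^\times$ and let $\mathcal{U}$ be the image of $\mathcal{O}^\times$ in $\mathcal{O}_3^\times$. Shimura reciprocity, used as you describe, gives only $I\cdot\mathcal{U}=\mathcal{O}_3^\times$, because the reciprocity law determines the action of $\sigma$ only up to the automorphism $\lambda(\sigma)\in\mathcal{O}^\times=\Aut(E)$ in the paper's diagram (\ref{eq:tricky}). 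When $\#\mathcal{O}^\times$ divides $4$ this suffices: $\mathcal{O}_3^\times/I$ is a quotient of $\mathcal{U}$, hence has exponent dividing $4$, so $I\supseteq(\mathcal{O}_3^\times)^4\supseteq(1+3\mathcal{O}_3)^4=1+3\mathcal{O}_3$, since raising to the fourth power is an automorphism of the pro-$3$ group $1+3\mathcal{O}_3$; this is essentially the paper's second case. But when $\mathcal{O}^\times=\mu_6$, i.e.\ $\mathcal{O}=\mathbb{Z}[\zeta_3]$ and $E$ is the curve $y^2=x^3-1$, the unit group has order divisible by $3$ and the argument collapses. Concretely, $\mathcal{O}_3^\times=\{\pm 1\}\times\mu_3\times(1+3\mathcal{O}_3)$, and for any surjective character $\phi\colon 1+3\mathcal{O}_3\to\mathbb{Z}/3$ killing $1+9\mathcal{O}_3$, the subgroup $I_\phi=\{\pm 1\}\times\{\zeta_3^{\phi(x)}x : x\in 1+3\mathcal{O}_3\}$ satisfies $I_\phi\cdot\mu_6=\mathcal{O}_3^\times$, while $I_\phi\cap(1+3\mathcal{O}_3)=\ker\phi$ has image of order only $3$ in $(1+3\mathcal{O}_3)/(1+9\mathcal{O}_3)$; moreover $\phi$ can be chosen so that this image is exactly the group of scalars. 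If $I$ were such a subgroup, then $\Gal(F_9/F_3)$ would act on $\bar{E}[9]$ by scalars and conclusion (a) would be false. Your remark that $1$ is the only global unit congruent to $1$ modulo $3\mathcal{O}$ does not exclude this: the troublesome units $\zeta_3^{\pm 1}$ are precisely not congruent to $1$ modulo $3$, and they contaminate the value of $u(\sigma)$ before you intersect with $\Gal(\bar{F}/F_3)$.

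Closing this gap is what the paper's first case is about, and it requires arithmetic input beyond reciprocity-up-to-units. Since $\mathbb{Z}[\zeta_3]$ has class number one, $E$ is the curve $y^2=x^3-1$, whose $3$-division field is computed explicitly: $F_3=K(\sqrt{-1},4^{1/3})$, an extension of $K=\mathbb{Q}(\zeta_3)$ ramified only above $2$. Consequently an id\`ele $s$ with component $s_3\in(1+3\mathcal{O}_3)^\times$ above $3$ and component $1$ at all other places has Artin symbol fixing $F_3$ (and $L_1$); only then does the reciprocity diagram force $\lambda(\sigma)$ to act trivially on $\bar{E}[3]$, hence to equal $1$ by the norm argument, so that $\sigma\in\Gal(\bar{F}/F_3)$ acts on $\bar{E}[9]$ by $s_3^{-1}$ and all nine classes of $(1+3\mathcal{O}_3)/(1+9\mathcal{O}_3)$ are realized in $\Gal(F_9/F_3)$. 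So your norm observation does enter the proof, but only after a global ramification computation that your sketch does not supply and that cannot be replaced by unit bookkeeping alone.
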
  

\begin{remark}  
\label{rem:CM1}
The elliptic curve $\mathbb{C}/\mathcal{O}$ is isogenous to $E = \mathbb{C}/\mathcal{A}$
and  $\mathbb{Z}_3 \otimes_{\mathbb{Z}} \mathcal{O}$ is clearly free over $\mathbb{Z}_3 \otimes_{\mathbb{Z}} \mathcal{O}$.  So we can always replace $E = \mathbb{C}/\mathcal{A}$
by an isogenous elliptic curve $\mathbb{C}/\mathcal{O}$ to which the conclusions (a) and (b) of Theorem \ref{thm:CM} apply.
\end{remark}

\begin{proof}[Proof of Theorem $\ref{thm:CM}$.]
The Shimura reciprocity law \cite[Theorem 5.4]{Shimura1994} has the following consequence.
Let  $s$ be an element of the id\`ele group $J_K$ of $K$. Let $K^{\mathrm{ab}}$ be the maximal abelian extension of $K$, and let $\sigma$ be an extension to $\mathbb{C}$ of the element of $\mathrm{Gal}(K^{\mathrm{ab}}/K)$ which is the image of $s$ under the Artin map.  The ring class field $L$ is by definition the class field associated to the subgroup $K^\times \cdot (\prod_v \mathcal{O}_v^\times \times K_\infty^\times)$ of the id\`eles of $K$, where  $v$ runs over the finite places in $K$ and $K_\infty = \mathbb{C}$ is the completion of $K$ at the unique infinite place.  Suppose $s \in  \prod_v \mathcal{O}_v^\times \times K_\infty^\times$, so that  $\sigma$ fixes $L$ and $s^{-1} \mathcal{A} = \mathcal{A}$. By \cite[Theorem 5.7]{Shimura1994}, $E$ is defined over $L$.  So the twist $E^\sigma$ of $E$ by  $\sigma$ is isomorphic to $E$.  The Shimura reciprocity law therefore shows that there is an automorphism $\lambda(\sigma) \in \mathrm{Aut}(E) = \mathcal{O}^\times$ such that the following diagram commutes:
$$\xymatrix{K/\mathcal{A}\ar[d]_{s^{-1}} \ar[rr]^{\xi} && E\ar[d]^{\sigma}\\ 
K/\mathcal{A} \ar[rr]^{\lambda(\sigma) \circ \xi} && E&.}$$
Here $\xi$ is equivariant with respect to the action of $\mathcal{O}$, so $\lambda(\sigma) \circ \xi = \xi \circ \lambda(\sigma)$ and we can write this diagram as
\begin{equation}
\label{eq:tricky}
\xymatrix{K/\mathcal{A}\ar[d]_{\lambda(\sigma) \cdot s^{-1}} \ar[rr]^{\xi} && E\ar[d]^{\sigma}\\ 
K/\mathcal{A} \ar[rr]^{\xi} && E&.}
\end{equation}
We first use this to bound the extension $F_3$ of $L$ generated by the coordinates of the $3$-torsion points of $\bar{E}$.  Define $\mathcal{O}_3 = \mathbb{Z}_3 \otimes_\mathbb{Z} \mathcal{O} \subset \prod_{v|3\mathcal{O}_K} \mathcal{O}_v$.  Multiplication by elements $s_3 \in (1 + 3\mathcal{O}_3)^\times$ fixes the $3$-torsion $3^{-1} \mathcal{A}/\mathcal{A}$ in $K/\mathcal{A}$.  Let $L_1$ be the abelian extension of $K$ which is the class field to $K^\times \cdot (\prod_{v \nmid 3\mathcal{O}_K} \mathcal{O}_v^\times \times (1 + 3\mathcal{O}_3)^\times \times K_{\infty}^\times)$, so that $L \subset L_1$.  The above diagram for $s \in \prod_{v \nmid 3\mathcal{O}_K} \mathcal{O}_v^\times \times (1 + 3\mathcal{O}_3)^\times \times K_{\infty}^\times$ gives $\sigma \in \mathrm{Gal}(\mathbb{C}/L_1)$ and a commutative square
\begin{equation}
\label{eq:nicer}
\xymatrix{3^{-1}\mathcal{A}/\mathcal{A}\ar[d]_{\lambda(\sigma) } \ar[rr]^{\xi} && E\ar[d]^{\sigma}\\ 
3^{-1}\mathcal{A}/\mathcal{A} \ar[rr]^{\xi} && E&.}
\end{equation}
By hypothesis, $\mathbb{Z}_3 \otimes_{\mathbb{Z}} \mathcal{A}$ is a free rank one $\mathcal{O}_3$-module, so $3^{-1} \mathcal{A}/\mathcal{A}$ is isomorphic to $\mathcal{O}_3/3 \mathcal{O}_3 = \mathcal{O}/3 \mathcal{O}$.  If multiplication by $\lambda(\sigma) \in \mathcal{O}^\times$ is trivial on $3^{-1}\mathcal{A}/\mathcal{A}$, it follows that $\lambda(\sigma) -1 \in 3 \mathcal{O} \subset 3 \mathcal{O}_K$.  However, $\lambda(\sigma)$ is a root of unity of order dividing $4$ or $6$, and the only such root of unity for which $\mathrm{Norm}_{K/\mathbb{Q}}(\lambda(\sigma) - 1)$ is divisible by $9$ is $\lambda(\sigma) = 1$. It now follows from (\ref{eq:nicer}) that the map $\sigma \mapsto \lambda(\sigma)$ must be a homomorphism from $\mathrm{Aut}(\mathbb{C}/L_1)$ to the cyclic group $\mathcal{O}^\times$.  Let $L_2$ be the cyclic extension of $L_1$ which is the fixed field of the kernel of this homomorphism.  

Assume first that $\mathcal{O}^\times$ has an element of order $3$.
Then $\mathcal{O} = \mathbb{Z}[\zeta_3]$ and $K = \mathbb{Q}(\zeta_3) = L$ when $\zeta_3$ is a primitive cube root of unity. The elliptic curve $E$ must be isomorphic to $y^2 = x^3 - 1$, since $\mathcal{O}$ has class number $1$.  The $3$-torsion points of $\bar{E}$ then consist of the point at infinity together with the points with $(x,y)$-coordinates given by elements of $ \{(0,\pm \sqrt{-1}), (4^{1/3},\pm \sqrt{3}), (\zeta_3 4^{1/3},\pm \sqrt{3}), (\zeta_3^2 4^{1/3},\pm \sqrt{3})\}$.  Thus $F_3 = K(\sqrt{-1}, 4^{1/3})$ is cyclic of degree $6$ over $K$, totally ramified over the prime $2 \mathcal{O}_K$ and unramified over all other primes of $\mathcal{O}_K$.  Accordingly, there is a subgroup $T$ of index $6$ in the units $\mathcal{O}_{K,2}^\times = \mathcal{O}_2^\times$ of the completion $\mathcal{O}_{K,2}$ of $K$ at $2 \mathcal{O}_K$ such that the group $U = K^\times \cdot (T \times \prod_{v \nmid 2\mathcal{O}_K} \mathcal{O}_v^\times \times K_\infty^\times)$ has trivial image under the Artin map to $\mathrm{Gal}(F_3/K)$.  We now let $s_3$ be an element of $(1 + 3\mathcal{O}_3)^\times \subset \mathcal{O}_3^\times$, and we let $s$ be the id\`ele with component $s_3$ above $3$ and trivial components at all other places.  Then $s \in U$, since the component of $s$ at the place over $2$ is $1$.  Hence the automorphism $[s,K] \in \mathrm{Gal}(K^{\mathrm{ab}}/K)$ fixes $F_3$ as well as $L_1$.  Therefore if $\sigma$ is any extension of  $[s,K]$ to $\mathrm{Aut}(\mathbb{C}/F_3L_1)$ we find that $\lambda(\sigma)$ is the identity.  Hence (\ref{eq:tricky}) gives a commutative diagram
\begin{equation}
\label{eq:urk}
\xymatrix{9^{-1}\mathcal{A}/\mathcal{A}\ar[d]_{s^{-1}} \ar[rr]^{\xi} && E\ar[d]^{\sigma}\\ 
9^{-1} \mathcal{A}/\mathcal{A} \ar[rr]^{\xi} && E&.}
\end{equation}
Since $\mathcal{O}_3$ is a discrete valuation ring in this case, $\mathcal{A}_3 = \mathbb{Z}_3 \otimes_{\mathbb{Z}} \mathcal{A}$ is automatically free of rank $1$ over $\mathcal{O}_3$.  Hence multiplication by the elements of $(1 + 3\mathcal{O}_3)^\times$ produces $9$ distinct endomorphisms of $9^{-1} \mathcal{A}/\mathcal{A}$.  Thus (\ref{eq:urk}) shows that the action of $\mathrm{Gal}(K^{\mathrm{ab}}/F_3L_1)$ on the $9$-torsion of $9^{-1}\mathcal{A}/\mathcal{A}$ has image a group of order at least $9$. Here $\mathrm{Gal}(K^{\mathrm{ab}}/F_3L_1)$ fixes the $3$-torsion $3^{-1}\mathcal{A}/\mathcal{A}$.  On picking generators for the $9$-torsion, we get a map from $\mathrm{Gal}(K^{\mathrm{ab}}/F_3L_1)$ into the kernel of the reduction map $\mathrm{GL}_2(\mathbb{Z}/9) \to \mathrm{GL}_2(\mathbb{Z}/3)$ whose image has order at least $9$.  Thus this image cannot just consist of scalar matrices, so the curve $E$ over the field $F_3$ satisfies condition (i) of Theorem \ref{cor:nontrivialgeneral!} when $\mathcal{O}^\times$ has order divisible by $3$. To produce an $N$ as in part (b) of Theorem \ref{thm:CM}, let $N$ be the class field associated to the subgroup $U' = K^\times \cdot (T \times U'_3 \times \prod_{v \nmid 6\mathcal{O}_K} \mathcal{O}_v^\times \times K_\infty^\times)$ where $U'_3$ is the subgroup of elements of $\mathcal{O}_3^\times = \mathcal{O}_{K,3}^\times $ that are congruent to elements of $1 + 3 \mathbb{Z}_3$ mod $9$.  Using the same arguments as above, the elements of $\mathrm{Gal}(K^{\mathrm{ab}}/N)$ act on $\bar{E}[9]$ by multiplication by elements of $1 + 3 \mathbb{Z}_3$.  Since $(1 + 3) ^2 \not \equiv 1$ mod $9$, the Weil pairing shows $\mathrm{Gal}(K^{\mathrm{ab}}/N)$ acts non-trivially on the ninth roots of unity, so the curve $E$ over $N$ satisfies condition (ii) of Theorem \ref{cor:nontrivialgeneral!}.

It is interesting to note that in this case, there are elements $s_3$ of $\mathcal{O}_3^\times$ so that if $s$ is the id\`ele with  component $s_3$ above $3$ and trivial components at all other places, the action of $s_3^{-1}$ on $3^{-1} \mathcal{A}/\mathcal{A}$ is of order $6$ but the Artin automorphism $[s,K]$ fixes $F_3$ since $F_3/K$ is unramified above $3$.  Thus when $\sigma \in \mathrm{Aut}(\mathbb{C}/K)$ extends $[s,K]$, the value of $\lambda(\sigma) \in \mathcal{O}^\times$  in diagram (\ref{eq:tricky})  must be a sixth root of unity.

We may now suppose that $\# (\mathcal{O}^\times)$ divides $4$.   Then $\lambda(\sigma)^4$ is the identity.  We conclude that for $s_3 \in (\mathcal{O}_3^\times)^4$ and $s$ the id\`ele with component $s_3$ above $3$ and component $1$ at all other places, the  diagram (\ref{eq:tricky}) becomes
\begin{equation}
\label{eq:better}
\xymatrix{K/\mathcal{A}\ar[d]_{s^{-1}} \ar[rr]^{\xi} && E\ar[d]^{\sigma}\\ 
K/\mathcal{A} \ar[rr]^{ \xi} && E&.}
\end{equation}
Since $\mathcal{A}$ is an $\mathcal{O}$-module, the subgroup $(1 + 3 \mathcal{O}_3)^\times$ of $\mathcal{O}_3^\times$ acts trivially by multiplication on the $3$-torsion $3^{-1}\mathcal{A}/\mathcal{A}$, while $(1 + 9 \mathcal{O}_3)^\times$ acts trivially on $9^{-1} \mathcal{A}/\mathcal{A}$.  Here $(1 + 3\mathcal{O}_3)^\times \subset (\mathcal{O}_3^\times)^4$ since $(1 + 3\mathcal{O}_3)^\times$ is a pro-$3$ group, so   (\ref{eq:better}) shows $\sigma$ acts trivially on the $3$-torsion of $\bar{E}$ if $s_3 \in (1 + 3 \mathcal{O}_3)^\times$.  Thus such $\sigma$ lie in $ \mathrm{Aut}(\mathbb{C}/F_3)$ because $F_3$ is the extension of $L$ obtained by adjoining the coordinates of the $3$-torsion points of $\bar{E}$.  

We now use the hypothesis that $\mathbb{Z}_3 \otimes \mathcal{A}$ is a free rank one $\mathcal{\mathcal{O}}_3$-module to be able to say that $9^{-1} \mathcal{A}/\mathcal{A}$ is a free rank one module for $\mathcal{O}_3/9 \mathcal{O}_3$. This implies that the multiplication by the $9$ elements of $(1 + 3 \mathcal{O}_3)^\times/(1 + 9 \mathcal{O}_3)^\times$ give distinct automorphisms of $9^{-1}\mathcal{A}/\mathcal{A}$, each of which fix $3^{-1}\mathcal{A}/\mathcal{A}$ elementwise. The diagram (\ref{eq:better}) together with $(1 + 3\mathcal{O}_3)^\times \subset (\mathcal{O}_3^\times)^4$  now shows that the elements of $(1 + 3 \mathcal{O}_3)^\times/(1 + 9 \mathcal{O}_3)^\times$ give $9$ distinct automorphisms of the field $F_9$ obtained from $F_3$ by adjoining the coordinates of the $9$-torsion points of $\bar{E}$.  Each of these automorphisms fixes $F_3$, so we have shown $\mathrm{Gal}(F_9/F_3)$ has order at least $9$.  We now argue as in the case when $\mathcal{O}^\times$ has order divisible by $3$ that the curve $E$ over the field $F_3$ satisfies condition (i) of Theorem \ref{cor:nontrivialgeneral!}, and that there is a field $N$ as in part (b) of Theorem \ref{thm:CM}.  This completes the proof.  
\end{proof}

\begin{example}
\label{ex:CMexample}
Let $E$ be the modular curve $X_0(32)$ (which is the strong Weil curve 32A1(B) in Cremona's notation \cite{Cremona}).  By \cite{Elkies}, $E$ has complex multiplication by $\mathbb{Z}[i]$ with multiplication by $i$ arising from the map $z \mapsto z + \frac{1}{4}$ on the upper half plane, which normalizes $\Gamma_0(32)$.  The four rational points are all cusps (and there are four cusps not defined over $\mathbb{Q}$).  Note that there is an isomorphism of $E$ with the curve $y^2 = x^4 - 1$, which is the quotient of the Fermat quartic by an involution.  The conductor of $E$ is $32$ and the complex multiplication of $E$ by $\mathbb{Z}[i]$ is defined over $K = \mathbb{Q}(i)$.  Since $32$ is prime to $3$, $E$ has good reduction above $3$ and the hypotheses of Theorem \ref{thm:CM} are satisfied.
\end{example}

\begin{remark} 
\label{rem:Cebotarev}
Suppose $E$ and $F_3$ are as in Theorem \ref{thm:CM}.  One can show by an easy Cebotarev argument that there are infinitely many prime ideals $\mathfrak{p}$ of $\mathcal{O}_{F_3}$ such that the reduction of $E$ at $\mathfrak{p}$ satisfies condition (i) of Theorem \ref{cor:nontrivialgeneral!} over the residue field of $\mathfrak{p}$.  
\end{remark}

%%%%%%%%%%%%%%%%%%%%%%%%%%%%%%%%%%%%%%%%%%%%%

\section{Triple Massey products and elliptic curves over finite fields}
\label{s:ellipticfinite}

In this section, we assume $\ell \ge 3$ and that $E$ is an elliptic curve over a finite field $F=\mathbb{F}_q$ such that $q$ is not divisible by $\ell$. In particular, $G_{\mathbb{F}_q}$ is profinitely generated by a Frobenius automorphism $\Phi$, which we write as $G_{\mathbb{F}_q}=\widehat{\langle \Phi\rangle}$. 

Our goal is to classify all characters $\chi_1, \chi_2, \chi_3:\pi_1(E)\to\mathbb{Z}/\ell$ such that the triple Massey product $\langle \chi_1, \chi_2, \chi_3 \rangle$ does not contain zero. 
If $\bar{E}[\ell]$ is defined over $\mathbb{F}_q$, a complete answer is given by Lemma \ref{lem:nice!} and Theorem \ref{thm:classifiynontrivial!}. Since $\mathbb{F}_q$ is finite, condition (2) of Theorem \ref{thm:classifiynontrivial!} never holds, which simplifies the statement; see Theorem \ref{thm:ellfin3case1} below. Additionally, we will analyze all cases when $\bar{E}[\ell]$ is not defined over $\mathbb{F}_q$. We will see that in these cases $\ell>3$ is possible.

Recall from Remark \ref{rem:U4!} that if $\ell>3$ then every element of $U_4(\mathbb{Z}/\ell)$ has order $1$ or $\ell$. On the other hand, $U_4(\mathbb{Z}/3)$ contains elements of order 9. Define
\begin{equation}
\label{eq:ell'}
\ell':=\left\{\begin{array}{cl}9&\mbox{ if $\ell=3$, and}\\
\ell &\mbox{ if $\ell>3$.}
\end{array}\right.
\end{equation}

For $\ell \ge 3$, we have a short exact sequence
$$0 \to \pi_1(\bar{E}) \to \pi_1(E) \to \widehat{\langle \Phi \rangle} \to 1.$$
Denote by $\mathbf{P}$ the set of all positive rational primes. Defining 
\begin{equation}
\label{eq:Gamma}
\Gamma:=\frac{\pi_1(E)}{\prod_{p\in \mathbf{P}-\{\ell\}}T_p(\bar{E})}
\end{equation}
we obtain an exact sequence
\begin{equation}
\label{eq:upfinfirst}
0 \to T_\ell(\bar{E}) \to \Gamma \to \widehat{\langle \Phi \rangle} \to 1.
\end{equation}
As before, let $\mathfrak{G}_0$ be the decomposition group (inside $\pi_1(E)$) of an inverse system of discrete valuations over the origin of $E$ in a cofinal system of finite \'etale covers of $E$.
The sequence \eqref{eq:upfinfirst} splits since the image of $\mathfrak{G}_0$ inside $\Gamma$ is isomorphic to $\widehat{\langle \Phi\rangle}$ and disjoint from the image of $T_\ell(\bar{E})$ inside $\Gamma$.

With $\ell'$ as in (\ref{eq:ell'}), we obtain an exact sequence
\begin{equation}
\label{eq:upfin0}
0 \to \frac{T_\ell(\bar{E})}{\ell'\,T_\ell(\bar{E})} \to \frac{\Gamma}{\ell'\,T_\ell(\bar{E})} \to \widehat{\langle \Phi \rangle} \to 1.
\end{equation}
We view $T_\ell(\bar{E}) / \ell'\,T_\ell(\bar{E})=\bar{E}[\ell']$ as a (normal) subgroup of $\Gamma/\ell'\,T_\ell(\bar{E})$, and we identify $\widehat{\langle \Phi \rangle}$ with the image of $\mathfrak{G}_0$ inside $\Gamma/\ell'\,T_\ell(\bar{E})$. Let $\widehat{\langle \Phi^{\ell'}\rangle}$ be the subgroup of $\widehat{\langle \Phi \rangle}$ that is profinitely generated by $\Phi^{\ell'}$. By considering the action of $\widehat{\langle \Phi \rangle}$ on $\Gamma/\ell' \,T_\ell(\bar{E})$, we see that the minimal normal subgroup  of $\Gamma/\ell'\,T_\ell(\bar{E})$ that contains $\widehat{\langle \Phi^{\ell'}\rangle}$ is profinitely generated by $\Phi^{\ell'}$ together with $(\Phi^{\ell'}-1)(\bar{E}[\ell'])$. Hence (\ref{eq:upfin0}) leads to an exact sequence
\begin{equation}
\label{eq:upfin1}
0 \to \frac{\bar{E}[\ell']}{(\Phi^{\ell'}-1)(\bar{E}[\ell'])} \to 
\frac{\Gamma/\ell'\,T_\ell(\bar{E})}{(\Phi^{\ell'}-1)(\bar{E}[\ell'])\cdot\widehat{\langle \Phi^{\ell'}\rangle}} \to \frac{\widehat{\langle \Phi\rangle}}{\widehat{\langle \Phi^{\ell'}\rangle}} \to 1.
\end{equation}
We define
\begin{eqnarray}
\label{eq:needthis2}
&\overline{\mathcal{N}_\ell}:=(\Phi^{\ell'}-1)(\bar{E}[\ell']), \quad 
\overline{\mathcal{T}_\ell}:=\displaystyle \frac{\bar{E}[\ell']}{\overline{\mathcal{N}_\ell}} , \\[1ex]
\label{eq:needthis3}
&\displaystyle\langle \,\overline{\Phi}_\ell\, \rangle:=\frac{\widehat{\langle \Phi\rangle}}{\widehat{\langle \Phi^{\ell'}\rangle}}  ,
\quad\mbox{and}\quad
\overline{G}_\ell:=\frac{\Gamma/\ell'\,T_\ell(\bar{E})}{\overline{\mathcal{N}_\ell} \cdot\widehat{\langle \Phi^{\ell'}\rangle} }.&
\end{eqnarray}
Letting $\xi_\ell:\langle \overline{\Phi}_\ell\rangle\to \mathrm{Aut}(\overline{\mathcal{T}_\ell})$ be the group homomorphism induced by (\ref{eq:upfin1}), $\overline{G}_\ell$ is the semidirect product 
\begin{equation}
\label{eq:pi13}
\overline{G}_\ell = \overline{\mathcal{T}_\ell}\rtimes_{\xi_\ell} \langle \overline{\Phi}_\ell\rangle.
\end{equation}
We view $\overline{\mathcal{T}_\ell}$ as a subgroup of $\overline{G}_\ell$ and we view $\overline{\Phi}_\ell$ as an element of $\overline{G}_\ell$ of order $\ell'$. Note that the commutator subgroup ${\overline{G}_\ell}'$ of $\overline{G}_\ell$ is contained in the abelian subgroup $\overline{\mathcal{T}_\ell}$ which implies that ${\overline{G}_\ell}''$ is trivial. 

For all $\ell\ge 3$, let $H_1$ be the subgroup of $U_4(\mathbb{Z}/\ell)$ defined in Remark \ref{rem:U4!} and let 
$$(p_1,p_2,p_3):\quad U_4(\mathbb{Z}/\ell)\to (\mathbb{Z}/\ell)^{\oplus 3}$$
be the homomorphism that sends each matrix $M=M(a_1,a_2,a_3,u,v,w)$ in (\ref{eq:matrixnotation}) to the triple $(a_1,a_2,a_3)$.  If $\chi_1,\chi_2,\chi_3:\pi_1(E)\to \mathbb{Z}/\ell$ are non-trivial characters, then they factor through the maximal elementary abelian $\ell$-quotient of $\pi_1(E)$, and hence through $\overline{G}_\ell$. Recall that the group $U_4(\mathbb{Z}/\ell)$ has exponent $9$ if $\ell=3$, and it has exponent $\ell$ if $\ell>3$. Hence we see, similarly to the discussion following (\ref{eq:rhobar}), that the triple Massey product $\langle \chi_1,\chi_2,\chi_3\rangle$ contains zero if and only if $\chi_1\cup\chi_2=\chi_2\cup\chi_3=0$ and the map $(\chi_1,\chi_2,\chi_3):\overline{G}_\ell\to (\mathbb{Z}/\ell)^{\oplus 3}$ can be lifted to a continuous group homomorphism $\rho: \overline{G}_\ell \to U_4(\mathbb{Z}/\ell)$ such that $(\chi_1,\chi_2,\chi_3) =  (p_1,p_2,p_3) \circ \rho$. 

\subsection{Suppose the $\ell$-torsion $\bar{E}[\ell]$ is defined over $\mathbb{F}_q$.}
\label{ss:easy1} 

By (\ref{eq:ohyeah!}) and Lemma \ref{lem:nice!}, we are reduced to the case when $\ell=3$, $\ell'=9$, and $\chi_1=\chi_2=\chi_3$ is given by a single character $\chi: \overline{G}_3 \to\mathbb{Z}/3$. Since $\Phi-1$ acts trivially on $\bar{E}[3]$, $(\Phi-1)^2$ acts trivially on $\bar{E}[9]$, which implies as in \S\ref{s:triple} that $\overline{\mathcal{T}_3}=\bar{E}[9]$. Since there is precisely one cubic extension of $\mathbb{F}_q$ inside $\overline{\mathbb{F}_q}$, we get the following simplification of Theorem \ref{thm:classifiynontrivial!}.

\begin{theorem}
\label{thm:ellfin3case1}
Suppose $E$ is an elliptic curve over a finite field $\mathbb{F}_q$ such that $q$ is not divisible by $3$ and such that the $3$-torsion $\bar{E}[3]$ is defined over $\mathbb{F}_q$. Let $\chi:\overline{G}_3\to\mathbb{Z}/3$ be a character. Then $\langle \chi,\chi,\chi \rangle$ does not contain zero if and only if the restriction of $\chi$ to $\bar{E}[9]$ is non-zero and there exists an element $a\in\bar{E}[9]-3\bar{E}[9]$ such that $\chi(a)=0$ and $\Phi_3(a)\not\in (\mathbb{Z}/9)\, a$.
\end{theorem}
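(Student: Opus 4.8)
The plan is to derive Theorem \ref{thm:ellfin3case1} directly from Theorem \ref{thm:classifiynontrivial!}, the point being that over a finite field condition~(2) of that theorem can never occur, so only condition~(1) survives, and condition~(1) simplifies to a statement about the single automorphism $\Phi_3$. First I would record the dictionary between the two statements. As noted just before the theorem, $\overline{\mathcal{T}_3}=\bar{E}[9]$, so after replacing $\pi_1(E)$ by $\overline{G}_3$ the hypothesis $\bar{\chi}\ne 0$ of Theorem \ref{thm:classifiynontrivial!} is exactly the requirement that the restriction of $\chi$ to $\bar{E}[9]$ be non-zero. Under the same identification, $\overline{K}_T$ is the kernel of $\chi$ restricted to $\bar{E}[9]$, so the clause ``$a\in\overline{K}_T-3\bar{E}[9]$'' unwinds to ``$a\in\bar{E}[9]-3\bar{E}[9]$ with $\chi(a)=0$'', matching the statement to be proved.

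Next I would dispose of condition~(2). The hypothesis that $\bar{E}[3]$ is defined over $\mathbb{F}_q$ forces, by the non-degenerate and Galois-equivariant Weil pairing (as in the discussion opening \S\ref{s:triple}), that $\zeta_3\in\mathbb{F}_q$, i.e.\ $q\equiv 1\pmod 3$. Writing $q=1+3k$ then gives $q^3-1 = 9(k+3k^2+3k^3)$, so $9\mid q^3-1$ and hence $\mathbb{F}_{q^3}$ contains a primitive ninth root of unity. Since $\mathbb{F}_q$ has a unique cubic extension inside $\overline{\mathbb{F}_q}$, namely $\mathbb{F}_{q^3}$, the fixed field of $K_F$ can never be a cubic extension of $\mathbb{F}_q$ that fails to contain a primitive ninth root of unity. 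Thus the first clause of condition~(2) is unsatisfiable, and condition~(2) never holds. Consequently Theorem \ref{thm:classifiynontrivial!} reduces in this setting to the assertion that $\langle\chi,\chi,\chi\rangle$ does not contain zero if and only if $\bar{\chi}\ne 0$ and condition~(1) holds.

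Finally I would translate condition~(1) into the stated Frobenius criterion. In the finite field case the action of $G_F^{(3)}$ on $\bar{E}[9]$ factors through the procyclic group $\langle \overline{\Phi}_3\rangle$ generated by $\Phi_3$, so every $\sigma\in G_F^{(3)}$ acts on $\bar{E}[9]$ as a power of $\Phi_3$. For a fixed $a$, if $\Phi_3(a)=ca$ with $c\in\mathbb{Z}/9$ then $\Phi_3^n(a)=c^n a\in(\mathbb{Z}/9)\,a$ for all $n$, so $\sigma(a)\in(\mathbb{Z}/9)\,a$ for every $\sigma$; conversely, if $\Phi_3(a)\notin(\mathbb{Z}/9)\,a$ one takes $\sigma=\Phi_3$. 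Hence the existence of some $\sigma$ with $\sigma(a)\notin(\mathbb{Z}/9)\,a$ is, for each $a$, equivalent to $\Phi_3(a)\notin(\mathbb{Z}/9)\,a$, and combining this with the two previous paragraphs yields precisely the claimed equivalence. I do not expect a genuine obstacle here: the whole argument is a bookkeeping reduction of Theorem \ref{thm:classifiynontrivial!}, and the only substantive input is the elementary congruence $q\equiv 1\pmod 3 \Rightarrow q^3\equiv 1\pmod 9$ that eliminates condition~(2).
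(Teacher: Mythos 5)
Your proposal is correct and follows essentially the same route as the paper, which likewise obtains Theorem \ref{thm:ellfin3case1} as a direct simplification of Theorem \ref{thm:classifiynontrivial!}: condition (2) is ruled out because $\mathbb{F}_q$ has a unique cubic extension $\mathbb{F}_{q^3}$, and condition (1) collapses to the Frobenius criterion since $G_{\mathbb{F}_q}^{(3)}$ acts on $\bar{E}[9]$ through powers of $\Phi_3$. Your explicit congruence $q\equiv 1 \pmod 3 \Rightarrow q^3\equiv 1\pmod 9$, showing that $\mathbb{F}_{q^3}$ contains a primitive ninth root of unity, is a clean way of filling in the detail that the paper leaves implicit in its appeal to the uniqueness of the cubic extension.
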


For examples of the situation discussed in Theorem \ref{thm:ellfin3case1}, see Remark \ref{rem:Cebotarev}.

\subsection{Suppose the $\ell$-torsion $\bar{E}[\ell]$ is not defined over $\mathbb{F}_q$.}
\label{ss:hard2} 

This means that the set of fixed points in $\bar{E}[\ell]$ under the action of $\Phi$ has either order $1$ or $\ell$. If this set has order $1$, then it follows that the maximal elementary abelian $\ell$-quotient group of $\pi_1(E)$ is a group of order $\ell$ given by $\widehat{\langle \Phi\rangle}/\widehat{\langle \Phi^\ell\rangle}$. Hence all characters in $\HH^1(E,\mathbb{Z}/\ell)$ are in $\HH^1(\mathbb{F}_q,\mathbb{Z}/\ell)$. Since $\HH^2(\mathbb{F}_q,\mathbb{Z}/\ell)=0$, every triple Massey product that is non-empty contains zero.

For the remainder of this subsection, we assume that the set of fixed points in $\bar{E}[\ell]$ under the action of $\Phi$ has order $\ell$. We need the following remark.

\begin{remark}
\label{rem:ohyes!}
Let $\ell \ge 3$ and let $\Gamma$ be as in (\ref{eq:Gamma}). Letting $\lambda:\widehat{\langle \Phi\rangle}\to \mathrm{Aut}(T_\ell(\bar{E})/\ell \,T_\ell(\bar{E})) = \mathrm{Aut}(\bar{E}[\ell])$ be the group homomorphism induced by the sequence (\ref{eq:upfinfirst}), $\Gamma/\ell \,T_\ell(\bar{E})$ is the semidirect product
\begin{equation}
\label{eq:Gammaquotient}
\Gamma/\ell\, T_\ell(\bar{E}) = \bar{E}[\ell]\rtimes_\lambda \widehat{\langle \Phi \rangle}.
\end{equation}
Since we assume that the set of fixed points in $\bar{E}[\ell]$ under the action of $\Phi$ has order $\ell$, there exists a basis $\{\overline{m}_1,\overline{m}_2\}$ of $\bar{E}[\ell]$ over $\mathbb{Z}/\ell$ such that the action of $\Phi$ on $\bar{E}[\ell]$ with respect to this basis is given by the matrix $\overline{A}_\Phi \in\mathrm{Aut}(\bar{E}[\ell])=\mathrm{GL}_2(\mathbb{Z}/\ell)$, where 
\begin{eqnarray}
\label{eq:TWO1}
\mbox{either} \quad \overline{A}_\Phi &=& \begin{pmatrix} 1&0\\0&\varepsilon\end{pmatrix}\quad\mbox{for some element $\varepsilon\in(\mathbb{Z}/\ell)^\times-\{1\}$}, \\
\label{eq:TWO2}
\mbox{or} \quad \overline{A}_\Phi &=& \begin{pmatrix} 1&1\\0&1\end{pmatrix}.
\end{eqnarray}
In both cases (\ref{eq:TWO1}) and (\ref{eq:TWO2}), the subgroup of $\bar{E}[\ell]$ generated by $\overline{m}_1$ equals the set of fixed points in $\bar{E}[\ell]$ under the action of $\Phi$. In the case (\ref{eq:TWO1}), the image of $(\Phi-1)$ on $\bar{E}[\ell]$ is given by $(\mathbb{Z}/\ell)\overline{m}_2$, whereas in the case (\ref{eq:TWO2}), the image of $(\Phi-1)$ on $\bar{E}[\ell]$ is given by $(\mathbb{Z}/\ell)\overline{m}_1$. Therefore, every character $\chi:\Gamma/\ell \,T_\ell(\bar{E})\to\mathbb{Z}/\ell$ satisfies $\chi(\overline{m}_2)=0$ if $\overline{A}_\Phi$ is as in (\ref{eq:TWO1}) and it satisfies $\chi(\overline{m}_1)=0$ if $\overline{A}_\Phi$ is as in (\ref{eq:TWO2}).
\end{remark}

The following result pins down the structure of $\overline{G}_\ell$ for $\ell\ge 3$.

\begin{lemma}
\label{lem:Gell}
Let $\ell\ge 3$, let $\ell'$ be as in $(\ref{eq:ell'})$, and let  $\overline{G}_\ell$ be as in $(\ref{eq:pi13})$. Extend the action of $\Phi$ on $\bar{E}[\ell]$ from $(\ref{eq:Gammaquotient})$ to an action of the integral group ring $\mathbb{Z}[\Phi]$ on $\bar{E}[\ell]$.
\begin{itemize}
\item[(a)] If $(\Phi-1)^2$ does not act as zero on $\bar{E}[\ell]$, i.e. $\overline{A}_\Phi$ is given as in $(\ref{eq:TWO1})$ with respect to some basis of $\bar{E}[\ell]$ over $\mathbb{Z}/\ell$, then $\overline{\mathcal{T}_\ell}\cong \mathbb{Z}/\ell'$ and 
$$\overline{G}_\ell= (\mathbb{Z}/\ell')\rtimes_{\xi_\ell }\langle\overline{\Phi}_\ell\rangle$$
where $\xi_\ell:\langle\overline{\Phi}_\ell\rangle\to (\mathbb{Z}/\ell')^\times$ is given by $\xi_\ell(\overline{\Phi}_\ell)=1+\ell\alpha $
for a certain $\alpha\in \mathbb{Z}/\ell'$. If $\ell=3$ then there is a unique $\alpha\in\{0,1,2\}$ such that $\xi_3(\overline{\Phi}_3)=1+3\alpha$, and if $\ell>3$ then $\xi_\ell(\overline{\Phi}_\ell)= 1$ and we let $\alpha=0$.

\item[(b)] If $(\Phi-1)^2$ acts as zero on $\bar{E}[\ell]$, i.e. $\overline{A}_\Phi$ is given as in $(\ref{eq:TWO2})$ with respect to some basis of $\bar{E}[\ell]$ over $\mathbb{Z}/\ell$, then $\overline{\mathcal{T}_\ell}\cong \bar{E}[\ell']$ and
$$\overline{G}_\ell= \bar{E}[\ell']\rtimes_{\xi_\ell }\langle\overline{\Phi}_\ell\rangle$$
where $\xi_\ell:\langle\overline{\Phi}_\ell\rangle\to \mathrm{GL}_2(\mathbb{Z}/\ell')$ is given by
$$\xi_\ell(\overline{\Phi}_\ell)= \begin{pmatrix} 1+\ell\alpha & 1+\ell\beta\\\ell\gamma&1+\ell\delta\end{pmatrix}$$
for certain $\alpha,\beta,\gamma,\delta\in \mathbb{Z}/\ell'$. If $\ell=3$ then there are unique such $\alpha,\beta,\gamma,\delta$ in $\{0,1,2\}$. On the other hand, if $\ell>3$ then $\xi_\ell(\overline{\Phi}_\ell)= \begin{pmatrix} 1&1\\0&1\end{pmatrix}$ and we let $\alpha=\beta=\gamma=\delta=0$.
\end{itemize}
\end{lemma}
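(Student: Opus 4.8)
The plan is to notice that most of the statement is already built into the setup: equation \eqref{eq:pi13} gives the semidirect-product decomposition $\overline{G}_\ell = \overline{\mathcal{T}_\ell}\rtimes_{\xi_\ell}\langle\overline{\Phi}_\ell\rangle$, and $\langle\overline{\Phi}_\ell\rangle = \widehat{\langle\Phi\rangle}/\widehat{\langle\Phi^{\ell'}\rangle}$ is cyclic of order $\ell'$ because $\widehat{\langle\Phi\rangle}\cong\widehat{\mathbb{Z}}$ and $\widehat{\langle\Phi^{\ell'}\rangle} = \ell'\,\widehat{\langle\Phi\rangle}$. Hence the only real content is to identify $\overline{\mathcal{T}_\ell} = \bar{E}[\ell']/(\Phi^{\ell'}-1)(\bar{E}[\ell'])$ and the induced action $\xi_\ell(\overline{\Phi}_\ell)$ explicitly. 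Both are controlled by the single matrix $A_\Phi\in\mathrm{GL}_2(\mathbb{Z}/\ell')$ giving the action of $\Phi$ on $\bar{E}[\ell']$, its power $A_\Phi^{\ell'}$, and the image of $A_\Phi^{\ell'}-I$. First I would fix a basis $\{\overline{m}_1,\overline{m}_2\}$ of $\bar{E}[\ell']$ reducing mod $\ell$ to the basis of $\bar{E}[\ell]$ from Remark \ref{rem:ohyes!}, so that $A_\Phi\equiv\overline{A}_\Phi\pmod{\ell}$, and then treat the two cases separately.

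For part (a), $\overline{A}_\Phi=\begin{pmatrix}1&0\\0&\varepsilon\end{pmatrix}$ has the two distinct eigenvalues $1\neq\varepsilon$ in $\mathbb{Z}/\ell$, so the characteristic polynomial of the action of $\Phi$ on $T_\ell(\bar{E})$ has unit discriminant in $\mathbb{Z}_\ell$; by Hensel's lemma this action is diagonalizable over $\mathbb{Z}_\ell$, giving an eigenbasis $\{m_1,m_2\}$ with $\Phi m_i=\lambda_i m_i$, $\lambda_1\equiv 1$ and $\lambda_2\equiv\varepsilon\pmod{\ell}$ (for $\ell>3$ no lifting is needed, as $A_\Phi=\overline{A}_\Phi$ is already diagonal). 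Reducing mod $\ell'$ I would compute $\lambda_1^{\ell'}\equiv 1\pmod{\ell'}$ (the only nontrivial instance being $\ell=3$, where $(1+3s)^9\equiv 1\pmod 9$ by the binomial theorem) and $\lambda_2^{\ell'}\equiv\varepsilon^{\ell'}\not\equiv 1\pmod{\ell}$ (using $\varepsilon^\ell=\varepsilon$ for $\ell>3$ and $(-1)^9=-1$ for $\ell=3$). Thus $A_\Phi^{\ell'}-I=\mathrm{diag}(0,\lambda_2^{\ell'}-1)$ has a unit lower entry, so $\overline{\mathcal{N}_\ell}=(\mathbb{Z}/\ell')\overline{m}_2$ and $\overline{\mathcal{T}_\ell}\cong\mathbb{Z}/\ell'$ is generated by the image of $\overline{m}_1$; the action $\xi_\ell(\overline{\Phi}_\ell)$ is then multiplication by $\lambda_1\bmod\ell'$, which equals $1$ when $\ell>3$ and the unique $1+3\alpha$ with $\alpha\in\{0,1,2\}$ when $\ell=3$.

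For part (b), $\overline{A}_\Phi=\begin{pmatrix}1&1\\0&1\end{pmatrix}$ has order $\ell$ in $\mathrm{GL}_2(\mathbb{Z}/\ell)$, and the kernel of the reduction $\mathrm{GL}_2(\mathbb{Z}/\ell')\to\mathrm{GL}_2(\mathbb{Z}/\ell)$ is abelian of exponent dividing $\ell$ (it is trivial when $\ell>3$, and for $\ell=3$ it is $\{I+3X\}\cong(\mathbb{Z}/3)^{4}$ with $(I+3X)^3=I$). Since $A_\Phi^{\ell}\equiv\overline{A}_\Phi^{\,\ell}=I\pmod{\ell}$ lies in this kernel, raising to the power $\ell'/\ell$ yields $A_\Phi^{\ell'}=I$. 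Therefore $\overline{\mathcal{N}_\ell}=0$, $\overline{\mathcal{T}_\ell}=\bar{E}[\ell']$, and $\xi_\ell(\overline{\Phi}_\ell)=A_\Phi$; as $A_\Phi$ reduces to $\begin{pmatrix}1&1\\0&1\end{pmatrix}$ mod $\ell$ it automatically has the asserted shape $\begin{pmatrix}1+\ell\alpha&1+\ell\beta\\\ell\gamma&1+\ell\delta\end{pmatrix}$, with $\alpha,\beta,\gamma,\delta\in\{0,1,2\}$ uniquely determined for $\ell=3$ and all zero for $\ell>3$ (where $\mathbb{Z}/\ell'=\mathbb{Z}/\ell$).

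The routine parts are the semidirect-product bookkeeping and the identification of $\xi_\ell(\overline{\Phi}_\ell)$ with the quotient action of $A_\Phi$ on $\overline{\mathcal{T}_\ell}$. The main obstacle is the pair of $3$-adic power computations at $\ell=3$: in (a) one must diagonalize over $\mathbb{Z}_3$ — which is exactly where the distinctness of the eigenvalues $1,\varepsilon$ mod $3$ enters — and verify $\lambda_1^{9}\equiv 1$ together with $\lambda_2^{9}\not\equiv 1\pmod 3$; in (b) one must see that $A_\Phi^{9}=I$ in $\mathrm{GL}_2(\mathbb{Z}/9)$, for which the description of the reduction kernel as an elementary abelian $3$-group of exponent $3$ is the decisive point.
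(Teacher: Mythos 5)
Your proof is correct, and it shares the paper's overall skeleton: both arguments reduce the lemma to computing the image $\overline{\mathcal{N}_\ell}$ of $\Phi^{\ell'}-1$ on $\bar{E}[\ell']$ from the matrix $A_\Phi$ and then reading off $\overline{\mathcal{T}_\ell}$ and $\xi_\ell(\overline{\Phi}_\ell)$. Where you genuinely diverge is in how the two key matrix facts are obtained. In part (a) the paper lifts the diagonalizing basis mod $\ell$ arbitrarily to $\bar{E}[9]$, writes $A_\Phi=\begin{pmatrix}1+3\alpha&3\beta\\3\gamma&2+3\delta\end{pmatrix}$, and computes $A_\Phi^9-I$ by brute force, obtaining the explicit generator $3\beta\,m_1+m_2$ of $\overline{\mathcal{N}_3}$; you instead diagonalize the $\Phi$-action $\ell$-adically, so that the ninth-power computation becomes scalar. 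Note that Hensel's lemma by itself only factors the characteristic polynomial as $(x-\lambda_1)(x-\lambda_2)$; to get an eigenbasis of $T_\ell(\bar{E})$ you still need the module splitting $T_\ell(\bar E)=\ker(\Phi-\lambda_1)\oplus\ker(\Phi-\lambda_2)$, which uses that $\lambda_1-\lambda_2$ is a unit (standard, but worth a line). In part (b) the paper again verifies $A_\Phi^9=I$ by direct computation, whereas you deduce it structurally from the fact that $A_\Phi^3$ lies in the congruence kernel $\ker\left(\mathrm{GL}_2(\mathbb{Z}/9)\to\mathrm{GL}_2(\mathbb{Z}/3)\right)$, which is elementary abelian of exponent $3$. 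Your route is less computational and explains why the powers collapse; the paper's route yields slightly more explicit data (a generator of $\overline{\mathcal{N}_3}$ in terms of the lifted basis), though the statement of the lemma does not require that extra precision.
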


\begin{proof}
Suppose first that we are in part (a), i.e. there exists a basis $\{\overline{m}_1,\overline{m}_2\}$ of $\bar{E}[\ell]$ over $\mathbb{Z}/\ell$ such that the action of $\Phi$ on $\bar{E}[\ell]$ with respect to this basis is given by the matrix $\overline{A}_\Phi$ in (\ref{eq:TWO1}).

If $\ell=3$ then $\ell'=9$ and $\varepsilon=2$. In this case, let $\{m_1,m_2\}$ be a basis of $\bar{E}[9]$ that reduces to the basis $\{\overline{m}_1,\overline{m}_2\}$ modulo 3. Then there exist $\alpha,\beta,\gamma,\delta\in \mathbb{Z}/9$ such that the action of $\Phi$ on $\bar{E}[9]$ is given by the matrix
$$A_\Phi=\begin{pmatrix} 1+3\alpha & 3\beta\\3\gamma&2+3\delta\end{pmatrix}.$$ 
Hence $A_\Phi^9-I=\begin{pmatrix} 0 & 3\beta\\3\gamma&7\end{pmatrix}$. In particular, we have $\overline{\mathcal{N}_3}=(\mathbb{Z}/9) (3\beta\,m_1+7m_2)=(\mathbb{Z}/9) (3\beta\,m_1+m_2)$ in (\ref{eq:needthis2}), and hence $\overline{\mathcal{T}_3}\cong \mathbb{Z}/9$. Moreover, since $\Phi(m_1)\equiv(1+3\alpha)\,m_1 \mod \overline{\mathcal{N}_3}$, we obtain that $\xi_3(\overline{\Phi}_3)=1+3\alpha$.

If $\ell>3$ then $\ell'=\ell$. In this case, the action of $\Phi$ on $\bar{E}[\ell]$ is given by the matrix $\overline{A}_\Phi$ in (\ref{eq:TWO1}). Hence $\overline{A}_\Phi^\ell-I=\begin{pmatrix} 0 & 0\\ 0&\varepsilon-1\end{pmatrix}$. Since $\varepsilon-1\in(\mathbb{Z}/\ell)^\times$, we have $\overline{\mathcal{N}_\ell}=(\mathbb{Z}/\ell) \overline{m}_2$ in (\ref{eq:needthis2}), and hence $\overline{\mathcal{T}_\ell}\cong \mathbb{Z}/\ell$. Moreover, since $\Phi(m_1)=m_1$, we obtain that $\xi_\ell(\overline{\Phi}_\ell)=1$. This completes the proof of part (a).

Suppose next that we are in part (b), i.e. there exists a basis $\{\overline{m}_1,\overline{m}_2\}$ of $\bar{E}[\ell]$ over $\mathbb{Z}/\ell$ such that the action of $\Phi$ on $\bar{E}[\ell]$ with respect to this basis is given by the matrix $\overline{A}_\Phi$ in (\ref{eq:TWO2}). 

If $\ell=3$ then $\ell'=9$. In this case, let $\{m_1,m_2\}$ be a basis of $\bar{E}[9]$ that reduces to the basis $\{\overline{m}_1,\overline{m}_2\}$ modulo 3. Then there exist $\alpha,\beta,\gamma,\delta\in \mathbb{Z}/9$ such that the action of $\Phi$ on $\bar{E}[9]$ is given by the matrix
$$A_\Phi=\begin{pmatrix} 1+3\alpha & 1+3\beta\\3\gamma&1+3\delta\end{pmatrix}.$$ 
Hence $A_\Phi^9-I$ is the zero matrix. It follows that $\overline{\mathcal{N}_3}=0$ in (\ref{eq:needthis2}), and hence $\overline{\mathcal{T}_3}=\bar{E}[9]$. In particular, $\xi_3(\overline{\Phi}_3)$ has the desired shape.

If $\ell>3$ then $\ell'=\ell$. In this case, the action of $\Phi$ on $\bar{E}[\ell]$ is given by the matrix $\overline{A}_\Phi$ in (\ref{eq:TWO2}). Hence $\overline{A}_\Phi^\ell-I$ is the zero matrix. It follows that $\overline{\mathcal{N}_\ell}=0$ in (\ref{eq:needthis2}), and hence $\overline{\mathcal{T}_\ell}=\bar{E}[\ell]$. In particular, $\xi_\ell(\overline{\Phi}_\ell)$ has the desired shape. This completes the proof of part (b).
\end{proof}

We have the following result on cup products:

\begin{lemma}
\label{lem:cuppdegenerate}
Let $\ell\ge 3$. Suppose $E$ is an elliptic curve over a finite field $\mathbb{F}_q$ such that $q$ is not divisible by $\ell$ and such that the set of fixed points in $\bar{E}[\ell]$ under the action of $\Phi$ has order $\ell$. 
Let $\chi_1,\chi_2\in \HH^1(E,\mathbb{Z}/\ell)=\mathrm{Hom}(\pi_1(E),\mathbb{Z}/\ell)$ be non-trivial characters. 
\begin{itemize}
\item[(a)] If $(\Phi-1)^2$ does not act as zero on $\bar{E}[\ell]$, then $\chi_1\cup\chi_2=0$ if and only if there exists $a\in (\mathbb{Z}/\ell)^\times$ such that $\chi_2=a\chi_1$.
\item[(b)] If $(\Phi-1)^2$ acts as zero on $\bar{E}[\ell]$, then we always have $\chi_1\cup\chi_2=0$. 
\end{itemize}
\end{lemma}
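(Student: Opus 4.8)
The plan is to reinterpret cup-product vanishing as a lifting problem and then read it off from the explicit semidirect-product description of Remark \ref{rem:ohyes!}. Since the $2$-fold Massey product is the singleton $\langle\chi_1,\chi_2\rangle=\{-\chi_1\cup\chi_2\}$, the discussion following \eqref{eq:rhobar} (in the case $t=2$, exactly as used in the proof of Lemma \ref{lem:cupp}) shows that $\chi_1\cup\chi_2=0$ if and only if the pair $(\chi_1,\chi_2)$ lifts to a continuous homomorphism $\rho\colon\pi_1(E)\to U_3(\mathbb{Z}/\ell)$ whose two superdiagonal coordinate characters are $\chi_1$ and $\chi_2$. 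Because $U_3(\mathbb{Z}/\ell)$ is a finite $\ell$-group of exponent $\ell$ for $\ell\ge 3$ (the $U_3$ analogue of \eqref{eq:powers}), any such $\rho$ annihilates $\prod_{p\in\mathbf{P}-\{\ell\}}T_p(\bar E)$ and $\ell\,T_\ell(\bar E)$, so it factors through $\Gamma/\ell\,T_\ell(\bar E)=\bar E[\ell]\rtimes_\lambda\widehat{\langle\Phi\rangle}$ as in \eqref{eq:Gammaquotient}. Thus it suffices to decide, for the generators $\overline m_1,\overline m_2,\Phi$ of this group, when $(\chi_1,\chi_2)$ admits such a lift.

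Writing $P_i=\rho(\overline m_i)$ and $Q=\rho(\Phi)$, a lift exists precisely when one can choose $P_1,P_2,Q\in U_3(\mathbb{Z}/\ell)$ with the superdiagonal entries dictated by the values of $\chi_1,\chi_2$ on $\overline m_1,\overline m_2,\Phi$, subject to $[P_1,P_2]=1$ and the two conjugation relations $QP_iQ^{-1}=\rho(\Phi\cdot\overline m_i)$ coming from $\overline A_\Phi$. I would carry this out in the two cases of Remark \ref{rem:ohyes!}. In case \eqref{eq:TWO1}, every character kills $\overline m_2$, so $P_2$ has zero superdiagonal and hence is central; the relation $QP_2Q^{-1}=P_2^{\varepsilon}$ then forces $P_2^{\varepsilon-1}=1$, and since $\varepsilon-1\in(\mathbb{Z}/\ell)^\times$ this gives $P_2=1$. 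The problem collapses to finding commuting $P_1,Q$ with prescribed superdiagonals $(\chi_1(\overline m_1),\chi_2(\overline m_1))$ and $(\chi_1(\Phi),\chi_2(\Phi))$; by the commutator formula these commute for a suitable choice of top-right entries if and only if these two vectors are linearly dependent, i.e. if and only if $\chi_1(\overline m_1)\chi_2(\Phi)-\chi_1(\Phi)\chi_2(\overline m_1)=0$. As $\chi_1,\chi_2$ are non-trivial and determined by their values on $\overline m_1$ and $\Phi$, this determinant condition is exactly proportionality $\chi_2=a\chi_1$ with $a\in(\mathbb{Z}/\ell)^\times$, which is part (a).

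In case \eqref{eq:TWO2}, every character kills $\overline m_1$, so now $P_1$ has zero superdiagonal and is central with a free top-right entry $c$. The relations reduce to $QP_1Q^{-1}=P_1$ (automatic) and $QP_2Q^{-1}=P_1P_2$, that is $[Q,P_2]=P_1$. Computing $[Q,P_2]$ via \eqref{eq:commutator}, its only possibly non-zero entry sits in the top-right corner, and since $c$ is unconstrained I can set $c$ equal to that entry, producing a valid lift for every pair $(\chi_1,\chi_2)$. Hence $\chi_1\cup\chi_2=0$ always, which is part (b). Throughout, the order relations $P_i^\ell=1$ are automatic since $U_3(\mathbb{Z}/\ell)$ has exponent $\ell$.

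The main obstacle, which is really the conceptual heart of the argument, is the sharp contrast between the two cases, together with checking that the reduction to $\bar E[\ell]\rtimes_\lambda\widehat{\langle\Phi\rangle}$ accounts for all relations. In case \eqref{eq:TWO1} the eigenvalue $\varepsilon\ne 1$ rigidifies the lift by forcing $P_2=1$, turning the question into a genuine linear-dependence condition on $(\chi_1,\chi_2)$; in case \eqref{eq:TWO2} the unipotent Frobenius action creates a central generator $\overline m_1$ whose image supplies a free parameter that absorbs the obstruction commutator, so a lift always exists. Once the lifting reformulation and the factorization through $\Gamma/\ell\,T_\ell(\bar E)$ are in place, the remaining steps are the routine $U_3$ commutator identities.
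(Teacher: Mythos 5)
Your proof is correct, and its skeleton --- reduce to a small quotient through which every relevant homomorphism factors, then exploit the split semidirect-product description \eqref{eq:Gammaquotient} and the dichotomy of Remark \ref{rem:ohyes!} --- matches the paper's. The execution differs in two ways worth noting. First, the paper passes to the smaller quotient $\overline{G}_\ell/(\overline{G}_\ell)^\ell$ (which requires the structure of $\overline{G}_\ell$ from Lemma \ref{lem:Gell}), and it proves part (a) with no lifting argument at all: in case \eqref{eq:TWO1} that quotient is $\mathbb{Z}/\ell\times\mathbb{Z}/\ell$, and the paper simply cites the fact that the cup product on $\HH^1(\mathbb{Z}/\ell\times\mathbb{Z}/\ell,\mathbb{Z}/\ell)$ is a non-degenerate alternating bilinear form, hence factors through the determinant and vanishes exactly on proportional pairs. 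You instead run the lifting criterion uniformly in both cases, working with $\Gamma/\ell\,T_\ell(\bar{E})$ and bypassing Lemma \ref{lem:Gell}: the invertibility of $\varepsilon-1$ forces $P_2=1$, after which the $U_3$ commutator formula yields the same determinant condition; this is a self-contained, purely matrix-theoretic substitute for the paper's structural fact, at the cost of a slightly longer verification (which you carry out correctly, including the reduction of the conjugation relations to the topological generators $\overline{m}_1,\overline{m}_2,\Phi$). Second, for part (b) the two arguments essentially coincide: the paper reduces by bilinearity to the case where $\{\chi_1,\chi_2\}$ is dual to the images of $\overline{m}_2$ and $\overline{\overline{\Phi}}$ and then sets $\kappa(\overline{m}_1)=\chi_1(\overline{\overline{\Phi}})\chi_2(\overline{m}_2)-\chi_2(\overline{\overline{\Phi}})\chi_1(\overline{m}_2)$, $\kappa(\overline{m}_2)=\kappa(\overline{\overline{\Phi}})=0$, which is precisely your choice of the free central entry $c$ of $P_1$ --- except that you handle arbitrary pairs $(\chi_1,\chi_2)$ directly and so do not need the dual-basis reduction. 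In short: your route buys uniformity and self-containedness; the paper's buys brevity in part (a) by invoking standard cohomology of elementary abelian $\ell$-groups.
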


\begin{proof}
Since $\ell\ge 3$, every non-identity element of $U_3(\mathbb{Z}/\ell)$ has order $\ell$. It follows that $\chi_1 \cup \chi_2 = 0$  as elements of $\HH^1(\pi_1(E),\mathbb{Z}/\ell)$ if and only if this is so when we consider them as elements of $\HH^1(\overline{G}_\ell/(\overline{G}_\ell)^\ell, \mathbb{Z}/\ell)$ with cup product in $\HH^2(\overline{G}_\ell/(\overline{G}_\ell)^\ell, \mathbb{Z}/\ell)$.

Suppose first that we are in part (a), i.e. 
$$\overline{G}_\ell/(\overline{G}_\ell)^\ell \cong \mathbb{Z}/\ell \times \mathbb{Z}/\ell\quad\mbox{for all $\ell\ge 3$}.$$
In particular, we write $\overline{G}_\ell/(\overline{G}_\ell)^\ell$ additively. The cup product on $\HH^1(\mathbb{Z}/\ell \times \mathbb{Z}/\ell, \mathbb{Z}/\ell)$ is a non-degenerate alternating bilinear form on a two-dimensional vector space over $\mathbb{Z}/\ell$ with values in 
$\HH^2(\mathbb{Z}/\ell \times \mathbb{Z}/\ell, \mathbb{Z}/\ell)$.  So it factors through the determinant and vanishes exactly on pairs that span the same space. This proves part (a).

Suppose next that we are in part (b), i.e. 
$$\overline{G}_\ell/(\overline{G}_\ell)^\ell \cong \bar{E}[\ell] \rtimes_{\overline{\xi}} \langle \overline{\overline{\Phi}}\rangle \quad\mbox{for all $\ell\ge 3$}$$
where $\langle\overline{\overline{\Phi}}\rangle = \widehat{\langle \Phi \rangle} / \widehat{\langle \Phi^\ell \rangle}$ and there exists a basis $\{\overline{m}_1,\overline{m}_2\}$ of $\bar{E}[\ell]$ such that, with respect to this basis, $\overline{\xi}(\overline{\overline{\Phi}})=\overline{A}_\Phi$ as in (\ref{eq:TWO2}). We want to show that $\chi_1\cup\chi_2=0$, which is equivalent to the statement that there exists a map $\kappa: \overline{G}_\ell/(\overline{G}_\ell)^\ell\to \mathbb{Z}/\ell$ such that the map 
\begin{equation}
\label{eq:rhoagain}
\begin{array}{cccc}
\rho:& \overline{G}_\ell/(\overline{G}_\ell)^\ell &\to& U_3(\mathbb{Z}/\ell)\\
&\overline{g} & \mapsto & \left(\begin{array}{ccc}
1&\chi_1(\overline{g})&\kappa(\overline{g})\\
0&1&\chi_2(\overline{g})\\
0&0&1\end{array}\right) 
\end{array}
\end{equation}
is a group homomorphism.  Since the cup product is alternating and $\HH^1(\overline{G}_\ell/(\overline{G}_\ell)^\ell,\mathbb{Z}/\ell)$ has dimension two, it will suffice to consider the case in which $\{\chi_1,\chi_2\}$ is the dual basis over $\mathbb{Z}/\ell$ to the basis for the maximal abelian quotient of $\overline{G}_\ell/(\overline{G}_\ell)^\ell$ formed by the images of $\overline{m}_2$ and $\overline{\overline \Phi}$.   One then checks by a commutator computation that $\kappa$ can be defined by 
$$\kappa(\overline{m}_1)=\chi_1(\overline{\overline{\Phi}})\chi_2(\overline{m}_2) - \chi_2(\overline{\overline{\Phi}})\chi_1(\overline{m}_2)\quad\mbox{and}\quad
\kappa(\overline{m}_2)=0=\kappa(\overline{\overline{\Phi}}).$$
Note that in this case $\rho$ is a group isomorphism, which completes the proof.
\end{proof}

As a consequence, we obtain the following result when $\overline{G}_\ell$ is as in part (a) of Lemma \ref{lem:Gell}:

\begin{lemma}
\label{lem:Gelleasy}
Suppose $\ell\ge 3$ and that $E$ is an elliptic curve over a finite field $\mathbb{F}_q$ such that $q$ is not divisible by $\ell$ and such that the set of fixed points in $\bar{E}[\ell]$ under the action of $\Phi$ has order $\ell$. Moreover, suppose that $(\Phi-1)^2$ does not act as zero on $\bar{E}[\ell]$. Let $\chi_1,\chi_2,\chi_3\in\HH^1(E,\mathbb{Z}/\ell)$ be characters such that $\chi_1\cup\chi_2=\chi_2\cup\chi_3=0$. Then $\langle \chi_1,\chi_2,\chi_3 \rangle$ contains zero.
\end{lemma}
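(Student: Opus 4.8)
The plan is to reduce the statement to a single character and then split according to whether $\ell>3$ or $\ell=3$. First, if one of $\chi_1,\chi_2,\chi_3$ is trivial, then $\langle\chi_1,\chi_2,\chi_3\rangle$ visibly contains zero: since the relevant cup products vanish we may take the zero continuous function for each off-diagonal $\kappa_{i,j}$, and the resulting defining system has associated $2$-cocycle identically $0$. So I may assume all three characters are non-trivial. Then Lemma \ref{lem:cuppdegenerate}(a) applies: from $\chi_1\cup\chi_2=0$ it gives $\chi_2=a\chi_1$ with $a\in(\mathbb{Z}/\ell)^\times$, and from $\chi_2\cup\chi_3=0$ it gives $\chi_3=c\chi_2$ with $c\in(\mathbb{Z}/\ell)^\times$; hence $\chi_1,\chi_2,\chi_3$ are unit multiples of the single non-trivial character $\chi:=\chi_1$. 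By (\ref{eq:ohyeah!}) it therefore suffices to prove that $\langle\chi,\chi,\chi\rangle$ contains zero, equivalently (as in the discussion following (\ref{eq:rhobar}) and the setup of \S\ref{s:ellipticfinite}) that $\chi$, viewed as a character of $\overline{G}_\ell$, lifts to a continuous homomorphism $\rho$ into $U_4(\mathbb{Z}/\ell)$ with all three superdiagonal characters equal to $\chi$.

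For $\ell>3$ this is immediate. Since $3<\ell$, the restricted Massey power $\langle\chi\rangle^3$ is non-empty and contains zero: this is exactly the consequence of (\ref{eq:Bockstein}) and (\ref{eq:Masseyrestrict}) used in the proof of Proposition \ref{prop:nice!}(b), where the Bockstein obstruction appears only at the $\ell$-th power. As $\langle\chi\rangle^3\subseteq\langle\chi,\chi,\chi\rangle$, the triple product contains zero.

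The substantive case is $\ell=3$, where I would produce the lift by hand using the explicit structure of $\overline{G}_3$ supplied by Lemma \ref{lem:Gell}(a): here $\overline{\mathcal{T}_3}\cong\mathbb{Z}/9$ is cyclic, generated by some $m$, and $\overline{G}_3=\langle m\rangle\rtimes\langle\overline{\Phi}_3\rangle$ with $\overline{\Phi}_3\,m\,\overline{\Phi}_3^{-1}=(1+3\alpha)m$ for a fixed $\alpha\in\{0,1,2\}$. Because $\chi$ factors through this metacyclic group, it is determined by the pair $(\chi(m),\chi(\overline{\Phi}_3))\in(\mathbb{Z}/3)^2$, which is non-zero. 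I would look for a lift landing in the subgroup $H$ of Remark \ref{rem:U4!} (so that $\psi\circ\rho=\chi$ automatically forces $(p_1,p_2,p_3)\circ\rho=(\chi,\chi,\chi)$), setting $\rho(m)=N(a,u,v,w)$ and $\rho(\overline{\Phi}_3)=N(b,u',v',w')$ with $a=\chi(m)$ and $b=\chi(\overline{\Phi}_3)$. Since $H\subset U_4(\mathbb{Z}/3)$ has exponent $9$, the relations $m^9=1$ and $\overline{\Phi}_3^9=1$ impose nothing, so the only relation to satisfy is $\rho(\overline{\Phi}_3)\rho(m)\rho(\overline{\Phi}_3)^{-1}=\rho(m)^{1+3\alpha}$. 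Using (\ref{eq:powers}) to compute $\rho(m)^{1+3\alpha}$ (note $N_m^3$ is the central matrix $N(0,0,a,0)$ because $a^3\equiv a$) and (\ref{eq:commutator}) to compute the commutator, this relation reduces to the single scalar congruence
\begin{equation*}
b\,(w-u)-a\,(w'-u')\equiv \alpha\,a \pmod 3
\end{equation*}
in the free parameters $u,w,u',w'\in\mathbb{Z}/3$. When $a\neq 0$ one solves for $w'-u'$; when $a=0$ (forcing $b\neq 0$) one simply takes $w=u$. In every case a solution exists, giving the desired $\rho$ and hence $0\in\langle\chi,\chi,\chi\rangle$.

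The main obstacle is precisely this $\ell=3$ computation: one must extract correctly from the matrix identities (\ref{eq:powers}) and (\ref{eq:commutator}) that the single defining relation of the metacyclic group $\overline{G}_3$ collapses to one linear congruence, and then check that this congruence is solvable for every non-trivial $\chi$. Everything else — the reduction to a single character and the $\ell>3$ case — is formal.
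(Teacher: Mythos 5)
Your proof is correct, and its skeleton coincides with the paper's: reduce to a single non-trivial character $\chi$ via Lemma \ref{lem:cuppdegenerate}(a) and (\ref{eq:ohyeah!}), then lift $\chi$ explicitly to $U_4(\mathbb{Z}/\ell)$ using the structure $\overline{G}_\ell=(\mathbb{Z}/\ell')\rtimes_{\xi_\ell}\langle\overline{\Phi}_\ell\rangle$, $\xi_\ell(\overline{\Phi}_\ell)=1+\ell\alpha$, supplied by Lemma \ref{lem:Gell}(a). You deviate in two places, both legitimately. First, for $\ell>3$ you bypass the matrix construction entirely: by (\ref{eq:Bockstein}) the restricted product $\langle\chi\rangle^\ell$ is non-empty, so the $\ell$-fold product of copies of $\chi$ is non-empty and therefore the triple product $\langle\chi,\chi,\chi\rangle$ contains $0$; this is exactly the mechanism of Proposition \ref{prop:nice!}(b), and it is available here because (\ref{eq:Bockstein}) and (\ref{eq:Masseyrestrict}) are general facts that do not require $\bar{E}[\ell]$ to be rational over $\mathbb{F}_q$. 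The paper instead runs one explicit lift uniformly for all $\ell\ge 3$, with $\alpha=0$ absorbing the case $\ell>3$. Second, for $\ell=3$ your parametrized ansatz $\rho(m)=N(a,u,v,w)$, $\rho(\overline{\Phi}_3)=N(b,u',v',w')$ in $H$ correctly reduces the only non-automatic relation of $(\mathbb{Z}/9)\rtimes(\mathbb{Z}/9)$ (the order relations are killed by $H$ having exponent $9$) to the congruence $b(w-u)-a(w'-u')\equiv\alpha a\pmod 3$; I checked this against (\ref{eq:powers}) and (\ref{eq:commutator}) and it is right, and it is solvable whenever $(a,b)\neq(0,0)$. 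A nice byproduct is that your case $a=\chi(m)=0$ handles $\bar{\chi}=0$ inside the same computation, whereas the paper must dispatch that case separately using $\HH^2(\mathbb{F}_q,\mathbb{Z}/\ell)=0$, since its specific lift normalizes $\chi(m)=1$. What the paper's route buys is uniformity in $\ell$; what yours buys is that the computational content is isolated at the one prime, $\ell=3$, where it is unavoidable.

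One cosmetic imprecision: in the trivial-character case you claim one may take every off-diagonal $\kappa_{i,j}$ to be zero. If $\chi_1=0$ but $\chi_2,\chi_3\neq 0$, then $\kappa_{2,3}$ must satisfy $\delta\kappa_{2,3}(\sigma,\tau)=-\chi_2(\sigma)\chi_3(\tau)$ and cannot in general be taken to be zero; nevertheless the associated cocycle $\nu(\sigma,\tau)=-\chi_1(\sigma)\kappa_{2,3}(\tau)-\kappa_{1,2}(\sigma)\chi_3(\tau)$ still vanishes, because $\chi_1=0$ kills the first term and $\kappa_{1,2}$ may indeed be taken to be zero. So the conclusion stands; only the justification needs this adjustment.
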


\begin{proof}
If any of $\chi_1,\chi_2,\chi_3$ is trivial, then $\langle \chi_1,\chi_2,\chi_3\rangle$ contains zero. Suppose now that none of these characters is trivial. Since $\chi_1\cup\chi_2=\chi_2\cup\chi_3=0$, we are, by (\ref{eq:ohyeah!}) and Lemma \ref{lem:cuppdegenerate}, reduced to consider the case when $\chi_1=\chi_2=\chi_3$ is a single character $\chi$. If the restriction $\overline{\chi}$ to $\HH^1(\bar{E},\mathbb{Z}/\ell)$ is trivial, then $\langle \chi,\chi,\chi\rangle$ contains zero since $\HH^2(\mathbb{F}_q,\mathbb{Z}/\ell)=0$.

Suppose now that $\overline{\chi}$ is non-trivial. Using the properties of $U_4(\mathbb{Z}/\ell)$, we can replace $\pi_1(E)$ by $\overline{G}_\ell$ in our arguments and assume that $\chi:\overline{G}_\ell\to\mathbb{Z}/\ell$. As in part (a) of Lemma \ref{lem:Gell}, we write
$$\overline{G}_\ell = (\mathbb{Z}/\ell')\rtimes_{\xi_\ell}\langle\overline{\Phi}_\ell\rangle$$
where $\xi_\ell(\overline{\Phi}_\ell)=1+\ell\alpha$ for some $\alpha\in \mathbb{Z}/\ell'$. Moreover, if $\ell=3$ then $\ell'=9$ and we choose $\alpha\in\{0,1,2\}$, and if $\ell>3$ then $\ell'=\ell$ and we choose $\alpha=0$. 

Let $m$ be a generator of $\mathbb{Z}/\ell'$. In particular, since $\overline{\chi}\ne 0$, we have that $\chi(m)\neq 0$. By replacing $m$ by a multiple if necessary, we can assume without loss of generality that $\chi(m)=1$. Let $\varphi\in \{0,1,\ldots,\ell-1\}$ be such that $\chi(\overline{\Phi}_\ell)\equiv\varphi\mod \ell$. We define a map $\rho:\overline{G}_\ell\to U_4(\mathbb{Z}/\ell)$ by
$$\rho(m)=\left(\begin{array}{cccc} 1&1&0&0\\0&1&1&0\\0&0&1&1\\0&0&0&1\end{array}\right) \quad\mbox{and}\quad
\rho(\overline{\Phi}_\ell)=\left(\begin{array}{cccc} 1&0&\alpha&0\\0&1&0&0\\0&0&1&0\\0&0&0&1\end{array}\right)\,\rho(m)^\varphi.$$
Then it follows that
$$[\rho(\overline{\Phi}_\ell),\rho(m)]=
[\rho(\overline{\Phi}_\ell)\rho(m)^{-\varphi},\rho(m)]=
\left(\begin{array}{cccc} 1&0&0&\alpha\\0&1&0&0\\0&0&1&0\\0&0&0&1\end{array}\right) = \rho(m)^{\ell\alpha}=
\rho((\overline{\Phi}_\ell-1)\,m)$$
where the second equality follows from (\ref{eq:commutator}), the third equality follows from (\ref{eq:powers}) and our choice of $\alpha$, and the last equality follows since $(\overline{\Phi}_\ell-1)\,m = \ell\alpha\,m$. This shows that $\rho$ is a group homomorphism, completing the proof.
\end{proof}

\begin{example}
\label{Ex:burp1}
For an example of the situation discussed in Lemma \ref{lem:Gelleasy}, let $\ell\ge 3$ be a prime number such that there exists an elliptic curve $\E$ over $\mathbb{Q}$ that has an $\ell$-torsion point. By \cite[Thm. 2]{Mazur1978}, $\ell\in\{3, 5, 7\}$. Let $p$ be a rational prime such that $p\equiv 2\!\mod \ell$ and such that $\E$ has good reduction modulo $p$ (using a Cebotarev argument, there are infinitely such $p$). This results in an elliptic curve $E$ over $\mathbb{F}_p$. By a classical result by Hasse, the action of $\Phi$ on $\bar{E}[\ell]$ has determinant $p \!\mod \ell \equiv 2 \!\mod \ell$. Hence one eigenvalue of this action is $1 \!\mod \ell$ and the other is $2 \!\mod \ell$, which means the conditions of Lemma \ref{lem:Gelleasy} are satisfied. 

It is easy to generalize to larger prime numbers $\ell$ by considering elliptic curves $\E$ over $\mathbb{Q}$ with good reduction modulo $p$ for which the image of the Galois representation on $\bar{\E}[\ell]$ is as large as possible (see \cite{SerreLAdic}), and by then passing to a suitable finite extension of $\mathbb{Q}$ containing an $\ell$-torsion point of $\bar{\E}$.
\end{example}

When $\overline{G}_\ell$ is as in part (b) of Lemma \ref{lem:Gell}, we obtain the following result:

\begin{proposition}
\label{prop:ellfin3case2}
Suppose $\ell\ge 3$ and that $E$ is an elliptic curve over a finite field $\mathbb{F}_q$ such that $q$ is not divisible by $\ell$ and such that the set of fixed points in $\bar{E}[\ell]$ under the action of $\Phi$ has order $\ell$. Moreover, suppose that $(\Phi-1)^2$ acts as zero on $\bar{E}[\ell]$. Let $\chi_1,\chi_2,\chi_3\in\HH^1(E,\mathbb{Z}/\ell)$ be non-trivial characters.

Then $\chi_1\cup\chi_2=\chi_2\cup\chi_3=0$. Moreover, $\langle \chi_1,\chi_2,\chi_3 \rangle$ contains zero if and only if there exists $m\in\bar{E}[\ell']$ whose image $\overline{m}\in\bar{E}[\ell]$ is not fixed by $\Phi$ such that the following two conditions hold when we write $x_i=\chi_i(m)$ and $\varphi_i=\chi_i(\overline{\Phi}_\ell)$ for $1\le i\le3$: 
\begin{itemize}
\item[(1)] $(\varphi_2x_3-\varphi_3x_2)\,x_1 - (\varphi_1x_2-\varphi_2x_1)\,x_3 \equiv 0\mod \ell$, and
\item[(2)] $(\varphi_2x_3-\varphi_3x_2)\,\varphi_1 - (\varphi_1x_2-\varphi_2x_1)\,\varphi_3 \equiv c \,x_1x_2x_3\mod \ell$, \\
where $c \in \mathbb{Z}$, $c=0$ if $\ell>3$, and $(\Phi-1)^2(m)\equiv 3\,c \,m\mod \langle \,3(\Phi-1)(m)\,\rangle$ if $\ell = 3$. 
\end{itemize}
\end{proposition}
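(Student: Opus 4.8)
The plan is to reduce the statement to the lifting criterion for triple Massey products and then translate the defining relations of $\overline{G}_\ell$ into matrix identities in $U_4(\mathbb{Z}/\ell)$. First, since we are in case (b) of Lemma \ref{lem:Gell}, Lemma \ref{lem:cuppdegenerate}(b) gives $\chi_1\cup\chi_2=\chi_2\cup\chi_3=0$, so $\langle\chi_1,\chi_2,\chi_3\rangle$ is non-empty by Remark \ref{rem:tripleMassey}. By the criterion stated after \eqref{eq:pi13}, $\langle\chi_1,\chi_2,\chi_3\rangle$ contains zero if and only if the map $(\chi_1,\chi_2,\chi_3)\colon\overline{G}_\ell\to(\mathbb{Z}/\ell)^{\oplus 3}$ lifts to a group homomorphism $\rho\colon\overline{G}_\ell\to U_4(\mathbb{Z}/\ell)$ with $(p_1,p_2,p_3)\circ\rho=(\chi_1,\chi_2,\chi_3)$, so the entire statement becomes a question about the existence of such a $\rho$.

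To analyze this I would fix $m\in\overline{\mathcal{T}_\ell}=\bar{E}[\ell']$ whose image in $\bar{E}[\ell]$ is not fixed by $\Phi$, and set $n:=(\overline{\Phi}_\ell-1)(m)$. This choice is useful for two reasons: $n=[\overline{\Phi}_\ell,m]$ is a commutator, hence $\chi_i(n)=0$ for all $i$; and, because the image of $m$ is not $\Phi$-fixed, a short determinant computation using the shape of $\xi_\ell(\overline{\Phi}_\ell)$ from Lemma \ref{lem:Gell}(b) shows that $\{m,n\}$ is a $\mathbb{Z}/\ell'$-basis of $\bar{E}[\ell']$. Consequently $\overline{G}_\ell$ is generated by $m$ and $\overline{\Phi}_\ell$ with $n=[\overline{\Phi}_\ell,m]$, and a presentation is given by the order relations $m^{\ell'}=n^{\ell'}=\overline{\Phi}_\ell^{\ell'}=1$, the commutativity $[m,n]=1$, and the twist $[\overline{\Phi}_\ell,n]=(\overline{\Phi}_\ell-1)^2(m)$.

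Now I would write $\rho(m)=M(x_1,x_2,x_3,\ast,\ast,\ast)$ and $\rho(\overline{\Phi}_\ell)=M(\varphi_1,\varphi_2,\varphi_3,\ast,\ast,\ast)$, as forced by the lifting condition, and set $\rho(n):=[\rho(\overline{\Phi}_\ell),\rho(m)]$. By the commutator formula \eqref{eq:commutator}, $\rho(n)=M(0,0,0,u_n,v_n,w_n)$ with
\[
u_n=\varphi_1 x_2-\varphi_2 x_1,\qquad w_n=\varphi_2 x_3-\varphi_3 x_2.
\]
Since $U_4(\mathbb{Z}/\ell)$ has exponent $\ell'$ (Remark \ref{rem:U4!}), the order relations hold automatically, so only $[m,n]=1$ and $[\overline{\Phi}_\ell,n]=(\overline{\Phi}_\ell-1)^2(m)$ remain to be imposed. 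Applying \eqref{eq:commutator} again, $[\rho(m),\rho(n)]$ is central with $(1,4)$-entry $x_1 w_n-x_3 u_n$, so requiring it to vanish is exactly condition (1). Likewise $[\rho(\overline{\Phi}_\ell),\rho(n)]$ is central with $(1,4)$-entry $\varphi_1 w_n-\varphi_3 u_n$. To identify the right-hand side I would use \eqref{eq:powers}: for $\ell=3$ one gets $\rho(m)^3=M(0,0,0,0,x_1x_2x_3,0)$, a central matrix of order dividing $3$, while $\rho(n)^3=1$; writing $(\overline{\Phi}_\ell-1)^2(m)=3c\,m+3c'\,n$ (legitimate since $(\Phi-1)^2$ kills $\bar{E}[\ell]$, so $(\overline{\Phi}_\ell-1)^2(m)\in 3\bar{E}[\ell']$) then gives $\rho\big((\overline{\Phi}_\ell-1)^2(m)\big)=M(0,0,0,0,c\,x_1x_2x_3,0)$. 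Thus the twist relation reads $\varphi_1 w_n-\varphi_3 u_n\equiv c\,x_1x_2x_3$, which after substituting $u_n,w_n$ is precisely condition (2); for $\ell>3$ one has $(\overline{\Phi}_\ell-1)^2(m)=0$ and $c=0$, giving the same identity.

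Finally I would assemble both implications. If $\rho$ exists, then for any $m$ with non-$\Phi$-fixed image — such $m$ exist because the fixed subgroup of $\bar{E}[\ell]$ has order $\ell$ — the two relations above are automatically satisfied by the homomorphism $\rho$, so (1) and (2) hold, producing the required $m$. Conversely, given $m$ satisfying (1) and (2), I would set all remaining free entries of $\rho(m)$ and $\rho(\overline{\Phi}_\ell)$ to zero and define $\rho$ on the generators $m,\overline{\Phi}_\ell$; since conditions (1) and (2) are exactly the two nontrivial relations, the order relations are automatic, and $[\overline{\Phi}_\ell,m]=n$ holds by construction, von Dyck's theorem shows $\rho$ extends to a homomorphism lifting $(\chi_1,\chi_2,\chi_3)$, whence $\langle\chi_1,\chi_2,\chi_3\rangle$ contains zero. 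The main obstacle is the $\ell=3$ bookkeeping: because $U_4(\mathbb{Z}/3)$ has exponent $9$, the cube $\rho(m)^3$ is a nontrivial central matrix whose $(1,4)$-entry is $x_1x_2x_3$, and matching this against the module-theoretic coefficient $c$ determined by $(\overline{\Phi}_\ell-1)^2(m)\equiv 3c\,m\bmod\langle 3n\rangle$ — while checking that the free matrix entries never enter conditions (1) and (2), so that setting them to zero is harmless — is the delicate point.
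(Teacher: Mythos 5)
Your proposal is correct and follows essentially the same route as the paper's proof: both reduce to the lifting criterion for a homomorphism $\rho:\overline{G}_\ell\to U_4(\mathbb{Z}/\ell)$, work with the basis $\{m,(\overline{\Phi}_\ell-1)(m)\}$ of $\bar{E}[\ell']$ supplied by Lemma \ref{lem:Gell}(b), and translate the semidirect-product relations into matrix identities via the power and commutator formulas of Remark \ref{rem:U4!}. The only difference is organizational: by treating $n=[\overline{\Phi}_\ell,m]$ as a derived word rather than as a third generator, you absorb the paper's equations (\ref{eq:eq3*})--(\ref{eq:eq5*}) (which the paper solves for the free parameters $r$, $t$, $s$) into definitions, so that exactly the two conditions (1) and (2) remain.
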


\begin{proof}
The first statement follows from part (b) of Lemma \ref{lem:cuppdegenerate}.
Let $m\in\bar{E}[\ell']$ be such that its image $\overline{m}\in\bar{E}[\ell]$ is not fixed by $\Phi$. Define $m':=(\Phi-1)(m)$, so the image $\overline{m'}\in\bar{E}[\ell]$ is not zero. Since the action of $(\Phi-1)^2$ is zero on $\bar{E}[\ell]$, it follows that $\overline{m'}$ is fixed by $\Phi$. In particular, $\chi_i(m')=0$ for all $1\le i \le 3$ by Remark \ref{rem:ohyes!}, implying that $\{m',m\}$ is a basis of $\bar{E}[\ell']$ over $\mathbb{Z}/\ell'$. It follows, as in part (b) of Lemma \ref{lem:Gell}, that the action of $\Phi$ on $\bar{E}[\ell']$ with respect to this basis $\{m',m\}$ is given by a matrix of the form
\begin{equation}
\label{eq:matrix!}
\xi_\ell(\overline{\Phi}_\ell) = \begin{pmatrix} 1+\ell\alpha & 1+\ell\beta\\\ell\gamma&1+\ell\delta\end{pmatrix}
\end{equation}
for certain $\alpha,\beta,\gamma,\delta\in\mathbb{Z}/\ell'$. Moreover, if $\ell=3$ then $\ell'=9$ and we choose $\alpha,\beta,\gamma,\delta\in\{0,1,2\}$, and if $\ell>3$ then $\ell'=\ell$ and we choose $\alpha=\beta=\gamma=\delta=0$. Hence, we obtain
\begin{equation}
\label{eq:independent}
(\Phi-1)^2(m)=(\Phi-1)(m') = \left\{\begin{array}{cl}
3\alpha\,m'+3\gamma\,m&\mbox{if $\ell=3$},\\
0&\mbox{if $\ell>3$}.
\end{array}\right.
\end{equation}
Define
\begin{equation}
\label{eq:c}
c=\left\{\begin{array}{cl}
\gamma&\mbox{if $\ell=3$},\\
0&\mbox{if $\ell>3$}.
\end{array}\right.
\end{equation}
In particular, $(\Phi-1)^2(m)\equiv 3\,c \,m\mod \langle \,3(\Phi-1)(m)\,\rangle$ if $\ell = 3$. 

The Massey product $\langle \chi_1,\chi_2,\chi_3\rangle$ contains zero if and only if there exists a group homomorphism $\rho:\overline{G}_\ell\to U_4(\mathbb{Z}/\ell)$ such that
$$\rho(m')=\left(\begin{array}{cccc} 1&0&r&s\\0&1&0&t\\0&0&1&0\\0&0&0&1\end{array}\right), \quad
\rho(m)=\left(\begin{array}{cccc} 1&x_1&u&v\\0&1&x_2&w\\0&0&1&x_3\\0&0&0&1\end{array}\right) \quad\mbox{and}\quad
\rho(\overline{\Phi})=\left(\begin{array}{cccc} 1&\varphi_1&d&e\\0&1&\varphi_2&f\\0&0&1&\varphi_3\\0&0&0&1\end{array}\right)$$
for certain $r,s,t,u,v,w,d,e,f\in\mathbb{Z}/\ell$. 
Using (\ref{eq:matrix!}), such a $\rho$ exists if and only if the following three relations are satisfied in $U_4(\mathbb{Z}/\ell)$:
\begin{eqnarray}
\label{eq:1in3}
{[\rho(m'),\rho(m)]} &=& I_4,\\
\label{eq:2in3}
{[\rho(\overline{\Phi}),\rho(m')]}  &=& \rho(m')^{\ell\alpha}\rho(m)^{\ell\gamma}\;=\;\rho(m)^{\ell\gamma},\\
\label{eq:3in3}
{[\rho(\overline{\Phi}),\rho(m)]}  &=& \rho(m')^{1+\ell\beta}\rho(m)^{\ell\delta}\;=\;\rho(m')\rho(m)^{\ell\delta},
\end{eqnarray}
where $I_4$ is the identity matrix in $U_4(\mathbb{Z}/\ell)$ and the second equality in both (\ref{eq:2in3}) and (\ref{eq:3in3}) follows since $\rho(m')$ has order dividing $\ell$ in $U_4(\mathbb{Z}/\ell)$ for all $\ell\ge 3$. Using  (\ref{eq:powers}) and (\ref{eq:commutator}), we see that equations (\ref{eq:1in3}), (\ref{eq:2in3}) and (\ref{eq:3in3}) are equivalent to the following equalities in $\mathbb{Z}/\ell$:
\begin{eqnarray}
\label{eq:eq1*}
rx_3&=&tx_1,\\
\label{eq:eq2*}
\gamma \,x_1x_2x_3 &=&t\varphi_1-r\varphi_3,\\
\label{eq:eq3*}
r&=&\varphi_1x_2-\varphi_2x_1,\\
\label{eq:eq4*}
t&=&\varphi_2x_3-\varphi_3x_2,\\
\label{eq:eq5*}
s+\delta\,x_1x_2x_3 &=&(\varphi_1w-fx_1)-(\varphi_3u-dx_3)-(\varphi_1x_2-\varphi_2x_1)(\varphi_3+x_3).
\end{eqnarray}
It follows that $\langle \chi_1,\chi_2,\chi_3\rangle$ contains zero if and only if there exists at least one choice of $r,s,t,u,v,w,d,e,f\in\mathbb{Z}/\ell$ such that all equations (\ref{eq:eq1*}) - (\ref{eq:eq5*}) are satisfied. 
Letting $u=w=d=f=0$ and $s=-\delta\,x_1x_2x_3-(\varphi_1x_2-\varphi_2x_1)(\varphi_3+x_3)$ satisfies (\ref{eq:eq5*}).
Since (\ref{eq:eq1*}) - (\ref{eq:eq4*}) only involve $r$ and $t$, we see that $\langle \chi_1,\chi_2,\chi_3\rangle$ contains zero if and only if there exist $r,t\in\mathbb{Z}/\ell$ such that all equations (\ref{eq:eq1*}) - (\ref{eq:eq4*}) are satisfied. Since, by (\ref{eq:c}), $c\equiv \gamma \mod 3$ if $\ell=3$ and since $c=\gamma=0$ if $\ell>3$, substituting (\ref{eq:eq3*}) and (\ref{eq:eq4*}) into (\ref{eq:eq1*}) and (\ref{eq:eq2*}) finishes the proof of Proposition \ref{prop:ellfin3case2}.
\end{proof}

We now prove Theorem~\ref{cor:forallell} as a consequence of Proposition~\ref{prop:ellfin3case2}.

\begin{proof}[Proof of Theorem $\ref{cor:forallell}$]
Let $E_t$ be the elliptic curve over $\Q(t)$ defined by \eqref{eq:igusacurve}. As stated in Example~\ref{ex:igusa}, Igusa has proved that, for all primes $\ell$, the Galois representation 
$$\Gal(\Q(\bar{E}_t[\ell])/\Q(t)) \to \GL_2(\Z/\ell)$$
is surjective, hence bijective. According to Hilbert's irreducibility theorem, these Galois groups remain the same for infinitely many rational specializations of the parameter $t$. Therefore, the prime $\ell$ being fixed, one obtains infinitely many (non-isomorphic) elliptic curves $\E$ over $\Q$ such that 
$$\Gal(\Q(\bar{\E}[\ell])/\Q) \simeq \GL_2(\Z/\ell).$$

Using a Cebotarev argument, it follows that there are infinitely many rational primes $p\ne \ell$ such that $\E$ has good reduction modulo $p$ and such that if $\mathfrak{p}$ is a prime above $p$ in $\mathbb{Q}(\bar{\E}[\ell])$ then $\mathfrak{p}$ is unramified over $p$ and the Frobenius action associated to $\mathfrak{p}$ is in the conjugacy class of the matrix $\begin{pmatrix} 1&1\\0&1\end{pmatrix}$ in $\mathrm{GL}_2(\mathbb{Z}/\ell)$.

Let $\ell>3$ and $p$ be prime numbers as above, and let $E$ be the reduction of $\E$ modulo $p$. Then $E$ is an elliptic curve over $\mathbb{F}_p$ such that the set of fixed points in $\bar{E}[\ell]$ under the action of $\Phi$ has order $\ell$ and such that $(\Phi-1)^2$ acts as zero on $\bar{E}[\ell]$. As in Lemma \ref{lem:Gell}, there exists a basis $\{m',m\}$ of $\bar{E}[\ell]$ over $\mathbb{Z}/\ell$ such that the action of $\Phi$ on $\bar{E}[\ell]$ with respect to this basis is given by $\xi(\overline{\Phi}_\ell) = \begin{pmatrix}1&1\\0&1\end{pmatrix}$. In particular, $m\in\bar{E}[\ell]$ is not fixed by $\Phi$. We define a group homomorphism $\chi:\overline{G}_\ell\to \mathbb{Z}/\ell$ by 
\begin{equation}
\label{eq:chiex}
\chi(m')=\chi(\overline{\Phi}_\ell)=0\quad \mbox{and} \quad \chi(m)=1.
\end{equation} 
Note that this gives indeed a group homomorphism since $\overline{G}_\ell/\langle m'\rangle \cong \mathbb{Z}/\ell\times \mathbb{Z}/\ell$.

Let $\chi_1,\chi_3\in \HH^1(E,\mathbb{Z}/\ell)$ be non-trivial characters, given by group homomorphisms $\overline{G}_\ell\to \mathbb{Z}/\ell$ whose kernels are equal to $\bar{E}[\ell]$. In particular, $x_1=\chi_1(m)=0$ and $x_3=\chi_3(m)=0$. Since $\chi_1$ and $\chi_3$ are non-trivial, we have that $\varphi_1=\chi_1(\overline{\Phi}_\ell)\ne 0$ and $\varphi_3=\chi_3(\overline{\Phi}_\ell)\ne 0$. Let $\chi_2:=\chi$ be as in (\ref{eq:chiex}); in particular,  $x_2=\chi_2(m)=1$. Then condition (1) from Proposition \ref{prop:ellfin3case2} is satisfied, whereas condition (2) is not satisfied since
$$(\varphi_2x_3-\varphi_3x_2)\,\varphi_1 - (\varphi_1x_2-\varphi_2x_1)\,\varphi_3 \equiv -2\,\varphi_1\varphi_3\not\equiv 0\mod \ell.$$
In other words, $\langle \chi_1,\chi_2,\chi_3\rangle$ is not empty and does not contain zero.
\end{proof}

\begin{example}
Considering the same construction as in the above proof, if $\chi_1=\chi_2=\chi_3$ are given by $\chi$ as in (\ref{eq:chiex}), then the conditions (1) and (2) from Proposition \ref{prop:ellfin3case2} are satisfied. In other words, $\langle \chi_1,\chi_2,\chi_3\rangle$ contains zero.
\end{example}

\end{document}